\DeclareSymbolFontAlphabet{\amsmathbb}{AMSb}%
\newcounter{nodemaker}
\tikzset{%
	symbol/.style={%
		draw=none,
		every to/.append style={%
			edge node={node [sloped, allow upside down, auto=false]{$#1$}}}
	}
}
\newcommand{\hirayyyy}{\text{\usefont{U}{min}{m}{n}\symbol{'110}}}
\DeclareFontFamily{U}{min}{}
\DeclareFontShape{U}{min}{m}{n}{<-> dmjhira}{}
\newcommand{\yo}{\hirayyyy}
\newcommand{\big@doubleop}[1]{%
	\DOTSB\mathop{\mathpalette\big@doubleop@aux{#1}}\slimits@
}
\newcommand*{\doublerightarrow}[2]{\mathrel{
		\settowidth{\@tempdima}{$\scriptstyle#1$}
		\settowidth{\@tempdimb}{$\scriptstyle#2$}
		\ifdim\@tempdimb>\@tempdima \@tempdima=\@tempdimb
		\mathop{\vcenter{
				\offinterlineskip\ialign{\hbox to\dimexpr\@tempdima+1em{##}\cr
					\rightarrowfill\cr\noalign{\kern.5ex}
					\rightarrowfill\cr}}}\limits^{\!#1}_{\!#2}}}
\newcommand*{\triplerightarrow}[1]{\mathrel{
		\settowidth{\@tempdima}{$\scriptstyle#1$}
		\mathop{\vcenter{
				\offinterlineskip\ialign{\hbox to\dimexpr\@tempdima+1em{##}\cr
					\rightarrowfill\cr\noalign{\kern.5ex}
					\rightarrowfill\cr\noalign{\kern.5ex}
					\rightarrowfill\cr}}}\limits^{\!#1}}}
\theoremstyle{definition}
\newtheorem{thm}{Theorem}[subsection]
\newtheorem*{thm*}{Theorem}
\newtheorem{prop}[thm]{Proposition}
\newtheorem{lem}[thm]{Lemma}
\newtheorem{cor}[thm]{Corollary}
\newtheorem{defn}[thm]{Definition}
\newtheorem{rem}[thm]{Remark}
\newtheorem{constr}[thm]{Construction}
\newtheorem{exa}[thm]{Example}
\newtheorem{notat}[thm]{Notation}
\newtheorem{disc}[thm]{}
\newtheorem{disclaim}[thm]{Remark}
\newtheorem{req}[thm]{Requirement}
\edef\cdrestoreat{
\noexpand\catcode\lq\noexpand\@=\the\catcode\lq\@}\catcode\lq\@=11
\def\@makechapterhead#1{%
  \vspace*{50\p@}%
  {\parindent \z@ \raggedright \robotoslab
    \ifnum \c@secnumdepth >\m@ne
      \if@mainmatter
        \Huge{\bfseries \@chapapp\space \thechapter}
        \par\nobreak
        \vskip 20\p@
      \fi
    \fi
    \interlinepenalty\@M
    \huge #1\par\nobreak
    \vskip 40\p@
  }}
\def\@makeschapterhead#1{%
  \vspace*{50\p@}%
  {\parindent \z@ \raggedright
    \robotoslab
    \interlinepenalty\@M
    \Huge \bfseries  #1\par\nobreak
    \vskip 40\p@
  }}
\newcommand{\noop}[1]{}
\newcommand{\quotmarks}[1]{``#1''}
\newcommand{\ie}{\textit{i.e.}\ }
\newcommand{\catof}[1]{\mathbf{#1}}
\newcommand{\ctg}[1]{\mathcal{#1}}
\newcommand{\cod}{\mathrm{cod}}
\newcommand{\dom}{\mathrm{dom}}
\newcommand{\id}{\mathrm{id}}
\newcommand{\Id}{\mathrm{Id}}
\newcommand{\due}{^{\mathsf{2}}}
\newcommand{\opp}{^{\mathsf{op}}}
\newcommand{\catcat}{\catof{Cat}}
\newcommand{\pr}{\mathrm{pr}}
\newcommand{\tand}{\quad\text{and}\quad}
\newcommand{\disp}{\mathrm{disp}}
\newcommand{\duu}{\dot{\ctg{U}}}
\newcommand{\uu}{\ctg{U}}
\newcommand{\du}{\dot{u}}
\newcommand{\ctx}{\ctg{B}} 
\newcommand{\ff}{\ctg{F}}
\newcommand{\ee}{\ctg{E}}
\newcommand{\bb}{\ctg{B}}
\newcommand{\la}{[}
\newcommand{\ra}{]}
\renewcommand{\ulcorner}{\lrcorner}
\newcommand{\hirayo}{\yo}
\renewcommand{\mathbb}{\ctg}
\newcommand{\classof}[1]{\mathscr{#1}}
\renewcommand{\ctx}{\catof{ctx}}
\renewcommand{\ff}{\ctg{P}}
\begin{document}

\title{Context, judgement, deduction}

\author{Greta Coraglia}
\address{LUCI Lab, Department of Philosophy, University of Milan, University of Milan, Via Festa del Perdono 7, 20122 Milano, Italy}
\email{greta.coraglia@unimi.it}

\author{Ivan Di Liberti}
\address{Department of Mathematics, Stockholm University, Stockholm, Sweden}
\email{diliberti.math@gmail.com}

\date{}


\begin{abstract}
 We introduce judgemental theories and their calculus as a general framework to present and study deductive systems. As an exemplification of their expressivity, we approach dependent type theory and natural deduction as special kinds of judgemental theories. Our analysis sheds light on both the topics, providing a new point of view. In the case of type theory, we provide an abstract definition of extensional type constructor featuring the usual formation, introduction, elimination and computation rules. For natural deduction we offer a deep analysis of structural rules, putting them into context. We finish the paper discussing the internal logic of a topos, a predicative topos, an elementary $2$-topos et similia, and show how these can be organized in judgemental theories.
 
  \smallskip \noindent \textbf{Keywords.} categorical logic, deductive systems, dependent type theory, natural deduction, topos, $2$-categories.
  
  \smallskip \noindent \textbf{MSC2020.} 18A15; 18N45; 03F03; 03B38; 03G30.
\end{abstract}

\maketitle
{  \hypersetup{linkcolor=black}
\setcounter{tocdepth}{1}
\tableofcontents
}

\epigraph{Everything that can be thought at all can be thought clearly. Everything that can be said can be said clearly.}{\cite[4.116]{witt}}

\section*{Introduction}
\subsection*{General discussion}
As the title advertises, this paper is concerned with the notions of \textit{context, judgement} and \textit{deduction}. These three notions belong to Logic, but different communities with different backgrounds and cultures have quite different perspectives on them. The purpose of this work is to present a mathematical and unified approach that accommodates these diverse takes on the topic of \textit{deduction}.
In order to show how tightly this new framework captures the motions of deductive systems, we develop two applications of the theory we introduce, and since our choice is intended to pay tribute to two major cultures concerned with the topic, we look at examples from type theory \cite{martin1984intuitionistic} and from proof theory \cite{troelstra2000basic}. They each stand on different {\it conceptual} grounds, as it is exemplified by the two following rules.

\begin{minipage}{0.49\textwidth}
    \begin{prooftree}\AxiomC{$\Gamma\vdash a:A$}\AxiomC{$\Gamma.A\vdash B\; {\tt Type}$}\LeftLabel{(DTy)}\BinaryInfC{$\Gamma\vdash B \la a \ra \; {\tt Type}$}\end{prooftree}
 \end{minipage}
\begin{minipage}{0.49\textwidth}
     \begin{prooftree}\AxiomC{$x;\Gamma \vdash \phi$}\AxiomC{$x;\Gamma,\phi \vdash \psi$}\LeftLabel{(Cut)}\BinaryInfC{$x;\Gamma \vdash \psi$}\end{prooftree}
 \end{minipage}
 \vspace{.08cm}

\noindent Despite their incredibly similar look, and the somehow parallel development of the theories in the same notational framework, there are some philosophical differences between the interpretation of the symbols above.
\begin{enumerate}
    \item[TT)] In type theories, especially those inspired by the reflections of Martin-Löf, $\Gamma \vdash B\; {\tt Type}$ is intuitively seen as a \textit{judgement}. A judgement is an act of knowledge \cite{martin1996meanings,martin1987truth} bound to a context $(\Gamma)$ and pertinent to an object $(B)$. For example, $\Gamma \vdash B\; {\tt Type}$ could be read \textit{Given} $\Gamma$, $B$ \textit{is a type}. The ontological status of a context and an object is, in principle, very different. Also, and most notably, judgements can be of different kinds, claiming all sorts of possible things about their objects \cite{martin1996meanings}.
    \item[ND)] In natural deduction, $x;\Gamma \vdash \psi$ is intuitively seen as a \textit{consecution}. A consecution is a relation between \textit{structured formulae} ($x;\Gamma$) and \textit{formulae} ($\psi$) \cite{kleenemathlog}. For example, the sequent  $x;\Gamma \vdash \psi$ could be read \textit{The multiset of formulae $\Gamma$, in the variables $x$, entails $\psi$}. Besides the fact that structured formulae are multisets of formulae, there isn't an ontological difference between the glyphs appearing on the left and right side of the entailment.
\end{enumerate}

These differences, though admittedly subtle and not that easy to detect on a technical level, dictate a part of the experts' intuition on the topics (see \Cref{prolegomena}). Of course, one could argue that these different points of view are mostly philosophical, that the oversimplification commanded by the length of this introduction stresses on them in a somewhat artificial way, and that some variations are allowed, for example \cite{negri2008structural} adopts what we would call a more type theoretic perspective on proof theory, and indeed it is always possible to adopt a judgemental perspective on consecutions. In particular, the deep connection between proof theory and type theory has of course been studied for a while, and its development falls under the paradigm that is mostly known as {\it propositions-as-types} \cite{wadler2015propositions}, and how to translate intuitionistic natural deduction into the language of types is beautifully described in \cite{Martin-Lof1996-MAROTM-7}. This work aims at providing a new, perhaps more semantic, argument in the same unifying direction.

Rebooting some ideas from \cite{jacobs1999categorical}, we conciliate the differences in a unified categorical framework that can highlight and clarify in a more precise way the meaning of all these apparently specific phenomena. Going back to the example of (DTy) and (Cut), we intuitively see how they both fit the same paradigm, in the sense that we could read both as instances of the following syntactic string of symbols
\vspace{-0.5cm}
\begin{prooftree}\AxiomC{$\heartsuit \; \vdash \blacksquare $}\AxiomC{$\square \vdash \clubsuit $}\LeftLabel{$(\triangle)$}\BinaryInfC{$\heartsuit \; \vdash \spadesuit$}\end{prooftree}
which we usually parse as: \textit{by} $\triangle$, \textit{given} $\heartsuit \; \vdash \blacksquare$ \textit{and} $\square \vdash \clubsuit$ \textit{we deduce} $\heartsuit \; \vdash \spadesuit$.
Our theory allows for a coherent expression of {\it all} such strings of symbols, and shows how a suitable choice of {\it context} either produces (DTy) or (Cut). As a necessary biproduct of our effort, we get a theory that has both the advantage of being very versatile, spanning much farther than dependent types and natural deduction, and computationally meaningful in the sense that it has a built-in notion of computation. 

\subsection*{Our contribution}

We introduce the notion of judgemental theory using the language of category theory. Judgemental theories are philosophically inspired by Martin-Löf's reflections on the topic of judgement \cite{martin1987truth,martin1996meanings}, and technically grounded on recent developments in the categorical treatment of dependent type theory \cite{awodey_2018,uemura2019general}. We believe that our perspective is also very attuned to \emph{type refinement systems} as described in \cite{10.1145/2676726.2676970}.
\begin{prooftree}\AxiomC{$\Gamma \vdash H \;\; \mathbb{H} \qquad \lambda H \vdash F\;\; \mathbb{F}$}\end{prooftree}
Usually, when looking at the premises of a rule, we are confronted with a list of (nested) judgements as above, which then are transformed into another judgement by the rule.  The technical advantage of our notion is to allow natively for \textit{nested judgements}. That is, for us a nested family of judgements is actually a whole judgement \textit{per se}:
\begin{prooftree}\AxiomC{$\Gamma \vdash H.\lambda F\;\; \mathbb{H}.\lambda\mathbb{F}\;.$}\end{prooftree}
This flexibility allows for an algebraic treatment of extensional type constructors (and of connectives), in a fashion that is somewhat inspired by Awodey's natural models. Judgment classifiers will be categories living over contexts, and functors between them will regulate deduction rules. In the example below, which is the formation rule for the $\Pi$-constructor in dependent type theory, the category $\mathbb{U}.\Delta \mathbb{U}$ classifies the nested judgement in the premise of the rule (on the right).

 \begin{minipage}{0.39\textwidth}
\[\begin{tikzcd}
	{\mathbb{U}.\Delta{\mathbb{U}}} && {\mathbb{U}} \\
	& {\ctx}
	\arrow["\Pi"{description}, color={rgb,255:red,167;green,42;blue,42}, from=1-1, to=1-3]
	\arrow[from=1-1, to=2-2]
	\arrow[from=1-3, to=2-2]
\end{tikzcd}\]
 \end{minipage}
 \begin{minipage}{0.60\textwidth}
     \begin{prooftree}\AxiomC{$\Gamma \vdash A\; {\tt Type}$}\AxiomC{$\Gamma.A \vdash B\; {\tt Type}$}\LeftLabel{($\Pi$\textsc{F})}\BinaryInfC{$\Gamma \vdash \Pi_A B \; {\tt Type}$}\end{prooftree}
 \end{minipage}
 \vspace{.05cm}

 We expand the original approach à la Jacobs, where some of these ideas were evidently hinted at both in the treatment of propositional logic \cite[Chapter 2]{jacobs1999categorical}, and in the treatment of type theories \cite[Chapter 10]{jacobs1999categorical}. It also expands Awodey's natural models \cite{awodey_2018}, taking very seriously his algebraic presentation of some constructors: see for example the  discussion at page 9 and later Prop. 2.4 in loc.\! cit.

 It should be noted that Logical Frameworks \cite{harper1993framework} of Plotkin et al.\! have a similar purpose, and their system is based on $\lambda\Pi$-calculus. Logical Frameworks do rely on the notion of judgement in a substantial way, as we do, but their approach is somewhat much more syntactic. More recently \cite{uemura2019general} has provided recipes to transform Logical Frameworks into more categorical gadgets, based on a generalization of Awodey's natural models. One of the advantages of our approach is to avoid the complexity of Logical Frameworks (and thus of Uemura's recipe), substituting it with a native categorical language.

 In the next subsection we will discuss in detail all the achievements of this structure on a technical level, though we end this very qualitative discussion with a bird's-eye view on a list of advantages of our system.

\begin{enumerate}
    \item We provide an algebraic approach to the notion of rule, which is also suitable for an analysis of the proof theory associated to a deductive system.
    \item In the case of type theory, this provides a clear definition of extensional type constructor, which was actually not available before, if not in a case-by-case form. We put into perspective the usual paradigm of rules (formation, introduction, elimination, $\beta$- and $\eta$-computation).
    \item In a similar spirit, we provide an in depth analysis of what constitutes what in proof theory are called structural rules. We here see that branches in proof trees are cones in our framework.
    \item We introduce the notion of \textit{policy} for a judgemental theory, inspired by the classical Cut of the Gentzen calculus. Surprisingly, type dependency in dependent type theory is precisely a type theoretic form of Cut.
    \item Our proofs are computationally meaningful. In a sense, this is due to the structural rigidity and the algebraicity of the framework. Each of our proofs needs to be as atomized and as transparent as possible, this will be particularly evident in our analysis of the proof theory generated by a dependent type theory with $\Pi$-types. The whole framework feels like a categorical proof assistant when doing proofs. 
\end{enumerate}

While this introduction seems to focus mainly on dependent type theories and natural deduction, the reader will notice that we have only chosen these two specific frameworks as an \textit{exemplum} of the expressive power of this theory. Indeed we could have covered modal logic, infinitary logics and much more.
\subsection*{Structure and main results}

\subsubsection*{\Cref{interpreting}} After the definition of (pre)judgemental theory (\Cref{jt}), we introduce \textit{judgemental theories} (\Cref{mainjt}), these are the mathematical gadgets which the whole paper is built on. In a nutshell, judgemental theories are pre-judgemental theories closed under a family of categorical constructions which will modulate the deductive power of our logical systems.

\subsubsection*{\Cref{calculi}} Each judgemental theory has an associated judgement calculus, which is a graphical bookkeeping of the categorical properties of the judgemental theory. In \Cref{prolegomena} we go through a critical analysis of the calculi of a dependent type theory and of natural deduction, both to highlight their main features and to have an inspirational account on what a calculus \textit{should look like}. Then, in \Cref{syn}, \Cref{judgement} and \Cref{rulesss} we declare the dictionary to translate a judgemental theory into its judgement calculus.

\subsubsection*{\Cref{secdtt}} In this section we show how to recover dependent type theories in our framework. After a definition of (judgemental) dependent type theory (\Cref{jdtt}), we discuss its relation with natural models à la Awodey (\Cref{awodeytous}) and with comprehension categories à la Jacobs (\Cref{tojacobs}). The rest of the section is dedicated to showing that the judgement calculus of a dependent type theory recovers the usual calculus of a dependent type theory (dtt).  In \Cref{dictionnaire} we declare a dictionary to convert our notation in the standard notation of dtts. In \Cref{cetd} we recover context extension and type dependency, in \Cref{pity} $\Pi$-types, in \Cref{idty} $\mathsf{Id}$-types. One can look at these subsections as a translation of the main results of \cite{awodey_2018} in our language. Yet, our proofs are much more synthetic and computationally meaningful, as they need to be incredibly atomized.  \Cref{phity} introduces a general notion of extensional type constructor, featuring the usual \textit{formation, introduction, elimination} and \textit{computation} rules. On a technical level, this is one of the most significant contributions of the paper, and of course the subsections \ref{pity} and \ref{idty}, could be a corollary of this subsection. To clarify this, we recover unit types and $\Sigma$-types as a corollary of \Cref{phity}. Because a judgement calculus is essentially only a representation of the judgemental theory, our technology is much more than \textit{a model of} dependent type theory, or a \textit{categorical semantics} for calculi, it is intrinsically a very syntactic object.

\subsubsection*{\Cref{secnat}} We then provide the correct judgemental infrastructure to sustain {\it natural deduction} for {\it first order logic}. Our exposition follows that in \Cref{secdtt}, meaning that we start from a (pre)judgemental theory (\Cref{jgen}), for readability reasons we pin-point a dictionary (\Cref{dict_gentzen}), and discuss which rules it generates, depending on the axioms we put on the judgemental system. In \Cref{doctrines} and \Cref{weak} we translate logical structures which are traditionally coded via doctrines into the fibrational setting, meaning the treatment of connectives and that of weakening, respectively. In \Cref{structrules} we provide evidence for structural rules, then for connectives in \Cref{connrules}. We add quantifiers in \Cref{fojndt} and show that we have rules regulating them in \Cref{quantrules}.  One of  our novelties emerges in \Cref{proptorules}, in \Cref{simplefib}, where we give a glimpse of the introduction of {\it monads as modalities} in the context of judgemental theories, and in the {\it treatment} of the cut rule (see \Cref{cutelim}). The correspondence between proof trees and (co)cones in the theory is very telling (see \Cref{lovebranches}).

\subsubsection*{\Cref{topos}} This section frames the internal logic of a topos-like category in the language of dependent type theories. Our notion is perfectly suited to present the Mitchell-Bénabou language of a topos (\Cref{elementary}). In \Cref{predicative} we introduce a notion of predicative (elementary) topos and show that it supports an essentially identically expressive internal logic, encoded by a dependent type theory. All infinitary pretopoi that we are aware of fall into our assumptions. \Cref{2topos} brings the previous discussion to the internal logic of an elementary $2$-topos in the sense of Weber, and shows that its internal logic can also be organized via a dependent type theory. Our treatment perfects that of Weber handling size issues in a more precise way. 

\subsection*{Acknowledgements}
The authors are especially grateful to \textit{Nathanael Arkor}, \textit{Jacopo Emmenegger} and \textit{Francesco Dagnino} for their comments and for their guidance through the literature. We are indebted to \textit{Pino Rosolini}, \textit{Milly Maietti} and \textit{Mike Shulman} for inspiring discussions. Both authors are grateful to the anonymous referee for their comments, which improved the presentation of the paper.

For part of this work the first author was supported by the BRIO \quotmarks{Bias, Risk and Opacity in AI} PRIN project (n.2020SSKZ7R) and by the Departments of Excellence 2023-2027 initiative, awarded by the Italian Ministry of Education, Universities and Research (MIUR).  Additionally, the first author would like to thank the University of Genova's PhD programme for supporting them during a large part of this project. The second author was supported by the Swedish Research Council (SRC, Vetenskapsrådet) under Grant No. 2019-04545. The research has received funding from Knut and Alice Wallenbergs Foundation through the Foundation’s program for mathematics.

\section{Judgemental theories} \label{interpreting}

\begin{defn}[Pre-judgemental theory] \label{jt}
A \textit{pre-judgemental theory} $(\ctx, \classof{J}, \classof{R}, \classof{P})$ of (\emph{contexts}, \emph{judgements}, \emph{rules}, \emph{policies}) is specified by the following data:
\begin{itemize}
    \item[($\ctx$)] a category (with terminal object $\diamond$);
    \item[($\classof{J}$)] a set of functors $f: \mathbb{F} \to \ctx$ over the category of contexts;
    \item[($\classof{R}$)] a set of functors $\lambda : \mathbb{F} \to \mathbb{G}$.
    \item[($\classof{P}$)] a set of $2$-dimensional cells filling (some) triangles induced by the rules (functors in $\classof{R})$ and the judgements (functors in $\classof{J})$, as in the diagrams below.
\end{itemize}
\adjustbox{max width=\textwidth}{
\begin{tikzcd}
	{\mathbb{F}} && {\mathbb{G}} & {\mathbb{F}} && {\mathbb{G}} & {\mathbb{F}} && {\mathbb{G}} & {\mathbb{F}} && {\mathbb{G}} \\
	\\
	& {\mathbb{H}} &&& {\mathbb{H}} &&& {\ctx} &&& {\ctx}
	\arrow[""{name=0, anchor=center, inner sep=0}, "\gamma"{description}, from=1-1, to=3-2]
	\arrow["\tau"{description}, from=1-3, to=3-2]
	\arrow["\lambda"{description}, from=1-1, to=1-3]
	\arrow[""{name=1, anchor=center, inner sep=0}, "\gamma"{description}, from=1-4, to=3-5]
	\arrow["\tau"{description}, from=1-6, to=3-5]
	\arrow["\lambda"{description}, from=1-4, to=1-6]
	\arrow[""{name=2, anchor=center, inner sep=0}, "f"{description}, from=1-7, to=3-8]
	\arrow["g"{description}, from=1-9, to=3-8]
	\arrow["\lambda"{description}, from=1-7, to=1-9]
	\arrow[""{name=3, anchor=center, inner sep=0}, "f"{description}, from=1-10, to=3-11]
	\arrow["g"{description}, from=1-12, to=3-11]
	\arrow["\lambda"{description}, from=1-10, to=1-12]
	\arrow["{\lambda^\sharp}"{description}, shorten >=8pt, Rightarrow, from=1-3, to=0]
	\arrow["{\lambda^\sharp}"{description}, shorten <=8pt, Rightarrow, from=1, to=1-6]
	\arrow["{\lambda^\sharp}"{description}, shorten <=8pt, Rightarrow, from=3, to=1-12]
	\arrow["{\lambda^\sharp}"{description}, shorten >=8pt, Rightarrow, from=1-9, to=2]
\end{tikzcd}
}
\end{defn}

\begin{notat} \label{mainnotation}
Let us introduce a bit of terminology:
\begin{itemize}
\item  \textit{contexts} $\Gamma, \Theta$ are objects of $\ctx$, morphisms $\sigma:\Theta\to\Gamma$ are {\it substitutions};
\item the element $g: \mathbb{G} \to \ctx$ of $\classof{J}$ is the \textit{classifier of the judgement} $\mathbb{G}$. We will often blur the distinction between the classifier and its judgement. In general we use letters such as $\mathbb{F, G,H}$;
\item objects $G$ in $\mathbb{G}$ are usually named after corresponding letter;
\item a \textit{rule} $\lambda$ is an element of $\classof{R}$;
\item a \textit{policy} $\lambda^\sharp$ is an element of $\classof{P}$;
\end{itemize}
and for special judgemental theories that happen to have an established notation we declare a switch of notation in the appropriate section.
\end{notat}

This is all the syntactic data needed to describe deduction: judgement classifiers prescribe the status of objects with respect to contexts; rules transform objects into other objects, with the context changing accordingly; and policies allow for the possibility that the context of the premise of the rule and that of the consequent are somehow naturally related, either covariantly (i.e. $\lambda^{\sharp}:f\Rightarrow g\lambda$), contravariantly (i.e. $\lambda^{\sharp}:g\lambda\Rightarrow f$), or constantly (i.e. when the triangle is strictly commutative) with respect to the direction of the rule. A bird's eye view of this first definition and what it might have been can be found in \Cref{futurejt}.

\begin{exa}[Toy Martin-Löf type theory]\label{toytt}
In order to get acquainted with the definition, let us introduce the categorical syntax to present a toy type theory. Consider a category $\ctx$ of contexts and substitutions, $\uu$ a category (universe) of types and $\duu$ a category (universe) of terms. For simplicity, we imagine that a term is always registered together with its type, so that objects of $\duu$ are of the form $(a,A)$ with $A$ an object in $\uu$. Define the pre-judgemental theory having $\mathcal{J}=\{u,\du\}$, $\mathcal{R}=\{\Sigma\}$, $\mathcal{P}=\{\Id: u\circ \Sigma\Rightarrow\du \}$ as below.
\[\begin{tikzcd}[ampersand replacement=\&]
	\duu \&\& \uu \\
	\& \ctx
	\arrow["u", to=2-2, from=1-3]
	\arrow[""{name=0, anchor=center, inner sep=0}, "\du"', from=1-1, to=2-2]
	\arrow["\Sigma", from=1-1, to=1-3]
	\arrow["\Id"{description}, shorten <=8pt, shorten >=8pt, Rightarrow, from=1-3, to=0]
\end{tikzcd}\]
Intuitively, $\du$ classifies terms with their context, $u$ does the same for types, $\Sigma$ performs typing, meaning it is the second projection, and $\Id$ shows that such an operation preserves the context.

We know that this all looks very unorthodox. We will use this toy example to get a first small impression on how judgemental theories work, see \ref{toytt.j}, \ref{toytt.r}, and above all \Cref{secdtt}.
\end{exa}

On the data expressed by a pre-judgemental theory we wish to impress some deductive power. This is achieved using some 2-categorical constructions and properties.

\begin{defn}[Judgemental theory] \label{mainjt}
A \textit{judgemental theory} $(\ctx, \classof{J}, \classof{R}, \classof{P})$ is a pre-judgemental theory such that
\begin{enumerate}
    \item $\classof{R}$ and $\classof{P}$ are closed under composition;
    \item the judgements are precisely those rules whose codomain is $\ctx$;
    \item $\classof{R}$ and $\classof{P}$ are closed under \textit{finite limits} (see \Cref{pull}, \Cref{equal} and \Cref{powers}), \textit{$\sharp$-liftings} (see \Cref{lifting}) and \textit{whiskering} (see \Cref{whiskering}).
\end{enumerate}
\end{defn}

The rest of this section is dedicated to clarifying the technical aspects of this definition. In the next section we will see that these properties influence the inference power of our logical systems. The more we put, the more we infer.

\begin{rem}
The condition (2) in \Cref{mainjt} is actually not needed, yet it is not harmful for the theory and it allows a cleaner axiomatization of (3), which otherwise would not look as pretty.
\end{rem}

\begin{rem}[Infinitary judgemental theories] \label{infinitary}
We could have allowed $\lambda$-small limits for $\lambda$ a (regular) cardinal, so that we are actually studying \textit{finitary} judgemental theories. In the present work we stick to this choice.
\end{rem}

\begin{rem}[Economical presentations of judgemental theories] \label{economical}
In the majority of concrete instances, a judgemental theory is presented by a pre-judgemental theory $(\ctx, \classof{J}, \classof{R}, \classof{P})$, in the sense that we close the data of judgements, rules and policies under finite limits and $\sharp$-liftings and whiskering. This produces the smallest judgemental theory containing $(\ctx, \classof{J}, \classof{R}, \classof{P})$.
\end{rem}

\begin{notat}\label{thenotation}
When a classifier $\mathbb{X}$ is obtained by iterated pullback of classifiers along rules, we try to use a notation that keeps in mind this special property of the classifier. Consider thus the diagram below.
\[\begin{tikzcd}
	{(\mathbb{H}.\lambda\mathbb{F})\gamma.\mathbb{V}} &&&& {\mathbb{V}} \\
	\\
	{\mathbb{H}.\lambda\mathbb{F}} && {\mathbb{F}\times\mathbb{G}} && {\mathbb{F}} \\
	\\
	{\mathbb{H}} && {\mathbb{G}} && {\ctx}
	\arrow["f"{description}, from=3-5, to=5-5]
	\arrow["g"{description}, from=5-3, to=5-5]
	\arrow["\lambda"{description}, color={rgb,255:red,214;green,92;blue,92}, from=5-1, to=5-3]
	\arrow[dashed, from=3-1, to=5-1]
	\arrow[from=3-3, to=5-3]
	\arrow[dashed, from=3-3, to=3-5]
	\arrow["\ulcorner"{anchor=center, pos=0.125}, draw=none, from=3-3, to=5-5]
	\arrow["\gamma"{description}, color={rgb,255:red,214;green,92;blue,92}, from=1-5, to=3-5]
	\arrow[dashed, from=1-1, to=3-1]
	\arrow[dashed, from=1-1, to=1-5]
	\arrow[curve={height=-30pt}, dashed, from=3-1, to=3-5]
	\arrow["\ulcorner"{anchor=center, pos=0.125}, draw=none, from=1-1, to=5-5]
	\arrow["\ulcorner"{anchor=center, pos=0.125}, draw=none, from=3-1, to=5-3]
\end{tikzcd}\]
\begin{itemize}
\item We use the notation $\mathbb{F} \times \mathbb{G}$ when we pullback classifiers along classifiers.
\item When we pullback a classifier along a rule, we use the notation $\mathbb{H}.\lambda\mathbb{F}$. We can make sense of this as if we put an additional bound on $\mathbb{F}$, and this is induced from $\mathbb{H}$ via $\lambda$. The reader will find more about this in \Cref{complexjcp}.
\item When we iterate this procedure, for example as in the diagram, we use the notation  $(\mathbb{H}.\lambda\mathbb{F})\gamma.\mathbb{V}$. When $g=f$ we write it $\mathbb{H}\gamma.\lambda\mathbb{V}$.
\end{itemize}
We are aware that this notation is not entirely economical, nor uniquely determined, but in the practical circumstances of this paper, it will be very useful.
\end{notat}

\begin{req}[Pullbacks] \label{pull}
$\classof{R}$ is closed under pullbacks in the sense that, given solid (black) spans and cones in $\classof{R}$ as below, we have that all the colored arrows belong to $\classof{R}$.

\[\begin{tikzcd}
	{\mathbb{X}} \\
	& {\mathbb{F}\classof{R}.\classof{R}\mathbb{H}} && {\mathbb{H}} \\
	\\
	& {\mathbb{F}} && {\mathbb{G}}
	\arrow["{\classof{R}}"{description}, from=4-2, to=4-4]
	\arrow["{\classof{R}}"{description}, from=2-4, to=4-4]
	\arrow["{\classof{R}}"{description}, color={rgb,255:red,214;green,92;blue,92}, dashed, from=2-2, to=4-2]
	\arrow["{\classof{R}}"{description}, color={rgb,255:red,214;green,92;blue,92}, dashed, from=2-2, to=2-4]
	\arrow["\ulcorner"{anchor=center, pos=0.125}, draw=none, from=2-2, to=4-4]
	\arrow["{\classof{R}}"{description}, curve={height=-12pt}, from=1-1, to=2-4]
	\arrow["{\classof{R}}"{description}, curve={height=12pt}, from=1-1, to=4-2]
	\arrow["{\classof{R}}"{description}, color={rgb,255:red,214;green,92;blue,92}, dotted, from=1-1, to=2-2]
\end{tikzcd}\]

In this definition we see the advantage of including $\classof{J}$ in $\classof{R}$, otherwise we would have to specify another axiom for the case in which the span is made of judgements. This could have been done without major differences, but would lead to an incredible proliferation of diagrams.
\end{req}

\begin{req}[Equalizers] \label{equal}
Similarly to the case of pullbacks, we require that the equalizer $\mathbb{E}$, together with its limiting maps, belongs to the rules.

\[\begin{tikzcd}
	{\mathbb{X}} \\
	{\mathbb{E}} & {\mathbb{F}} & {\mathbb{G}}
	\arrow["{\classof{R}}"{description}, shift left=2, from=2-2, to=2-3]
	\arrow["{\classof{R}}"{description}, shift right=2, from=2-2, to=2-3]
	\arrow["{\classof{R}}"', color={rgb,255:red,214;green,92;blue,92}, dashed, from=2-1, to=2-2]
	\arrow["{\classof{R}}"{description}, from=1-1, to=2-2]
	\arrow["{\classof{R}}"', color={rgb,255:red,214;green,92;blue,92}, dashed, from=1-1, to=2-1]
\end{tikzcd}\]
\end{req}

\begin{req}[Powers] \label{powers}
We also require that, for all rules $\mathbb{X} \to \mathbb{Y}$, we can form the finite powers below in $\classof{R}$ and that, as in \Cref{equal} and \Cref{pull}, all the arrows induced by their universal properties by cones made of rules, are rules too.
\[\begin{tikzcd}
	{\phantom{i}\mathbb{X}\due} && {\phantom{i}\mathbb{X}^n} \\
	{\mathbb{X}} && {\mathbb{X}}
	\arrow["{\dom}", shift left=1, from=1-1, to=2-1]
	\arrow["{\cod}"', shift right=1, from=1-1, to=2-1]
	\arrow["{\pi_1}"', shift right=4, from=1-3, to=2-3]
	\arrow["{\pi_n}", shift left=4, from=1-3, to=2-3]
	\arrow["{...}"{description}, from=1-3, to=2-3]
\end{tikzcd}\]
Regarding our meta-theory, this only requires that the finite product of sets (or classes, or $\kappa$-sets for some inaccessible cardinal $\kappa$, depending on the meta-theory of choice) is again a set (or class, or $\kappa$-set).
\end{req}

Finally, we complete the discussion of \Cref{mainjt} by explaining what we mean by closure under $\sharp$-liftings -- and what a $\sharp$-lifting is.

\begin{defn}[$\sharp$-lifting]\label{defnsharplift}
Consider a functor $f\colon\ctg{A}\to\ctg{B}$ and a 2-cell $\alpha\colon c' \Rightarrow c\colon \ctg{C} \to \ctg{B}$, and compute the pullback of $c$ and $c'$ along $f$. A \emph{sharp lifting} or \emph{$\sharp$-lifting} of $\alpha$ along $f$ is a pair $(\id\times_{\ctg{B}}f, \alpha\times_{\ctg{B}}f )$ of a functor and a natural transformation as below,
\[\begin{tikzcd}[ampersand replacement=\&]
	{c\times_{\ctg{B}}f} \&\& {\ctg{A}} \\
	\& {c'\times_{\ctg{B}}f} \\
	\\
	{\ctg{C}} \&\& {\ctg{B}} \\
	\& {\ctg{C}}
	\arrow["f", from=1-3, to=4-3]
	\arrow[""{name=0, anchor=center, inner sep=0}, "c", shift left=2, from=4-1, to=4-3]
	\arrow["{c'}"', from=5-2, to=4-3]
	\arrow["\id"'{pos=0.3}, curve={height=12pt}, from=4-1, to=5-2]
	\arrow["{f^*c'}"', from=2-2, to=1-3]
	\arrow[""{name=1, anchor=center, inner sep=0}, "{f^*c}", from=1-1, to=1-3]
	\arrow["{\id\times_{\ctg{B}}f}"'{pos=0.3}, curve={height=12pt}, dashed, from=1-1, to=2-2]
	\arrow["\alpha"{pos=0.4}, shorten >=4pt, Rightarrow, from=5-2, to=0]
	\arrow["{\alpha\times_{\ctg{B}}f}"{pos=0.4}, shorten >=3pt, Rightarrow, dashed, from=2-2, to=1]
\end{tikzcd}\]
so that all ``vertical'' squares commute.
\end{defn}
\begin{rem}
We could not find any precise instance of $\sharp$-liftings in the literature, our construction seems original. The most comparable results seems to be contained in  \cite{gray_fibered}, see for example Thm.\! 2.10 in loc.\! cit.
\end{rem}

This construction is clearly less known that the others appearing in the previous sections, but it will be of fundamental importance in using judgemental theories, as it is actually quite closely related to the process of computing substitution. We detail its technical features in \Cref{notions_subs} and its consequences for the logic in \Cref{onsubst}, but for the moment the reader only needs to know that whenever $f$ is a(n) (op)fibration, such a $\sharp$-lifting exists.

\begin{req}[$\sharp$-lifting] \label{lifting}

Consider a policy  $\lambda^\sharp$ as in the diagram below, and a rule $\classof{R}^*$ which is a fibration. Then, by \Cref{charsharplift}, there is a pair $(\classof{R}^*\lambda, \classof{R}^*\lambda^\sharp)$ as below. Closure for $\sharp$-liftings amounts to ask that $\classof{R}^*\lambda$ and $\classof{R}^*\lambda^\sharp$ belong to $\classof{R}$ and $\classof{P}$, respectively. Similarly, for $\classof{R}_*$ an opfibration, we get the op-diagram on the right. Notice that in both cases the square containing $\lambda$ and $\classof{R}^*\lambda$ or $\classof{R}_*\lambda$ commutes strictly.

\adjustbox{max width=\textwidth}{%
\begin{tikzcd}
	{\mathbb{F}\classof{R}^*.\classof{R}\mathbb{H}} &&& {\mathbb{H}} & {\mathbb{F}\classof{R}_*.\classof{R}\mathbb{H}} &&& {\mathbb{H}} \\
	& {\mathbb{G}\classof{R}^*.\classof{R}\mathbb{H}} &&&& {\mathbb{G}\classof{R}_*.\classof{R}\mathbb{H}} \\
	{\mathbb{F}} &&& {\mathbb{X}} & {\mathbb{F}} &&& {\mathbb{X}} \\
	& {\mathbb{G}} &&&& {\mathbb{G}}
	\arrow["{\classof{R}_*}", from=1-8, to=3-8]
	\arrow["{\classof{R}}"{description}, from=4-6, to=3-8]
	\arrow[from=2-6, to=4-6]
	\arrow["{\classof{R}}"{description}, from=2-6, to=1-8]
	\arrow[""{name=0, anchor=center, inner sep=0}, "{\classof{R}}"{description}, from=1-5, to=1-8]
	\arrow["{\classof{R}_*\lambda}"'{pos=0.6}, color={rgb,255:red,214;green,92;blue,92}, curve={height=6pt}, dashed, from=1-5, to=2-6]
	\arrow["\lambda"', curve={height=6pt}, from=3-5, to=4-6]
	\arrow[from=1-5, to=3-5]
	\arrow[""{name=1, anchor=center, inner sep=0}, "{\classof{R}}"{description}, from=3-5, to=3-8]
	\arrow["{\classof{R}}"{description}, from=4-2, to=3-4]
	\arrow[""{name=2, anchor=center, inner sep=0}, "{\classof{R}}"{description}, from=3-1, to=3-4]
	\arrow["\lambda"', curve={height=6pt}, from=3-1, to=4-2]
	\arrow["{\classof{R}}"{description}, from=2-2, to=1-4]
	\arrow[""{name=3, anchor=center, inner sep=0}, "{\classof{R}}"{description}, from=1-1, to=1-4]
	\arrow["{\classof{R}^*\lambda}"'{pos=0.6}, color={rgb,255:red,214;green,92;blue,92}, curve={height=6pt}, dashed, from=1-1, to=2-2]
	\arrow["{\classof{R}^*}", from=1-4, to=3-4]
	\arrow[from=2-2, to=4-2]
	\arrow[from=1-1, to=3-1]
	\arrow["\ulcorner"{anchor=center, pos=0.125}, draw=none, from=2-6, to=3-8]
	\arrow["\ulcorner"{anchor=center, pos=0.125}, draw=none, from=1-5, to=3-8]
	\arrow["\ulcorner"{anchor=center, pos=0.125}, draw=none, from=2-2, to=3-4]
	\arrow["\ulcorner"{anchor=center, pos=0.125}, draw=none, from=1-1, to=3-4]
	\arrow["{\lambda^\sharp}"'{pos=0.6}, shorten <=2pt, shorten >=4pt, Rightarrow, from=4-2, to=2]
	\arrow["{\classof{R}^*\lambda^\sharp}"{pos=0.2}, color={rgb,255:red,214;green,92;blue,92}, shorten <=2pt, shorten >=4pt, Rightarrow, dashed, from=2-2, to=3]
	\arrow["{\classof{R}_*\lambda_\sharp}"'{pos=0.8}, color={rgb,255:red,214;green,92;blue,92}, shorten <=4pt, shorten >=2pt, Rightarrow, dashed, from=0, to=2-6]
	\arrow["{\lambda_\sharp}"{pos=0.4}, shorten <=4pt, shorten >=2pt, Rightarrow, from=1, to=4-6]
\end{tikzcd}
}
\end{req}

\begin{req}[Whiskering] \label{whiskering}
As it is quite frequent in 2-category theory
, one might want to compose 1-cells with 2-cells. As our theory is quite heavily 2-dimensional, it only make sense that we ask that performing such an operation does not bring us out of our logic. We recall the general definition in the 2-category $\catcat$, as it is the one we are interested in now. Consider categories, functors, and natural transformations as below.
\[\begin{tikzcd}[ampersand replacement=\&]
	{\ctg{A}} \& {\ctg{B}} \& {\ctg{C}} \& {\ctg{D}}
	\arrow["F", from=1-1, to=1-2]
	\arrow["H", from=1-3, to=1-4]
	\arrow[""{name=0, anchor=center, inner sep=0}, "G", shift left=3, from=1-2, to=1-3]
	\arrow[""{name=1, anchor=center, inner sep=0}, "{G'}"', shift right=3, from=1-2, to=1-3]
	\arrow["\alpha", shorten <=2pt, shorten >=2pt, Rightarrow, from=0, to=1]
\end{tikzcd}\]
One can always define natural transformations $\alpha \ast F\colon GF\Rightarrow G'F$ and $H\ast \alpha\colon HG\Rightarrow HG'$ that point-wise act as
\[
(\alpha\ast F)_A = \alpha_{FA}\qquad (H\ast \alpha)_B = H(\alpha_B).
\]

Given classifiers $\mathbb{X,Y,Z}$ , rules $\lambda, \lambda', \gamma$ and a policy $\lambda^\sharp$, then, we say that the judgemental theory is closed under whiskering in the sense that the colored natural transformations are policies too.
\[\begin{tikzcd}
	{\mathbb{X}} & {\mathbb{Y}} & {\mathbb{Z}} && {\mathbb{X}} && {\mathbb{Z}} \\
	\\
	{\mathbb{Z}} & {\mathbb{X}} & {\mathbb{Y}} && {\mathbb{Z}} && {\mathbb{X}}
	\arrow[""{name=0, anchor=center, inner sep=0}, "\lambda", shift left=4, from=1-1, to=1-2]
	\arrow[""{name=1, anchor=center, inner sep=0}, "{\lambda'}"', shift right=4, from=1-1, to=1-2]
	\arrow["\gamma", from=1-2, to=1-3]
	\arrow[""{name=2, anchor=center, inner sep=0}, "\gamma\lambda", shift left=4, from=1-5, to=1-7]
	\arrow[""{name=3, anchor=center, inner sep=0}, "{\gamma\lambda'}"', shift right=4, from=1-5, to=1-7]
	\arrow[""{name=4, anchor=center, inner sep=0}, "\lambda", shift left=4, from=3-2, to=3-3]
	\arrow[""{name=5, anchor=center, inner sep=0}, "{\lambda'}"', shift right=4, from=3-2, to=3-3]
	\arrow["\gamma"', from=3-1, to=3-2]
	\arrow[""{name=6, anchor=center, inner sep=0}, "\lambda\gamma", shift left=4, from=3-5, to=3-7]
	\arrow[""{name=7, anchor=center, inner sep=0}, "{\lambda'\gamma}"', shift right=4, from=3-5, to=3-7]
	\arrow["{\gamma(\lambda^\sharp)}"', color={rgb,255:red,214;green,92;blue,92}, shorten <=2pt, shorten >=2pt, Rightarrow, from=2, to=3]
	\arrow["{\lambda^\sharp}", shorten <=2pt, shorten >=2pt, Rightarrow, from=0, to=1]
	\arrow["{\lambda^\sharp}", shorten <=2pt, shorten >=2pt, Rightarrow, from=4, to=5]
	\arrow["{\lambda^\sharp_{\gamma}}"', color={rgb,255:red,214;green,92;blue,92}, shorten <=2pt, shorten >=2pt, Rightarrow, from=6, to=7]
\end{tikzcd}\]
\end{req}

\begin{disclaim}
We understand that up to this point the reader has been faced with many concepts and strange notations that they have no intuition for. Therefore, before we formally describe what it means to define a calculus based on the blocks that are our judgemental theories, we advise the reader to skip to \Cref{dicjud} and see what it is that we are trying to achieve.
\end{disclaim}

\subsection{Notions of substitution}\label{notions_subs}

\begin{defn}\label{substitutional}
A judgement classifier is \emph{(op)substi\-tutional} if it is an (op)fibration. A rule is \emph{(op)cartesian} if it preserves (op)cartesian maps. A policy is \emph{(op)Frobenius} with respect to a given judgement classifier if it has cartesian components.

By extension, we will say that a (pre)judgemental theory is \emph{(op)substi\-tutional} if all judgement classifiers are (op)substitutional, all rules are (op)cartesian, and all policies are (op)Frobenius.
\end{defn}

\begin{thm}[Characterizing fibrations via $\sharp$-lifting]\label{charsharplift}
The following are equivalent for a functor $p \colon \ee \to \bb$:
\begin{enumerate}
\item $p$ is a fibration with a cleavage $S$;
\item each 2-cell $\alpha\colon c' \Rightarrow c\colon \ctg{C} \to \ctg{B}$ admits a terminal $\sharp$-lifting along $p$, meaning that provided another $(g,\beta)$ $\sharp$-lifting of $\alpha$ along $p$, we have a unique vertical $\overline{\beta}$ such that $\beta=\alpha\times_{\ctg{B}}p\ast\overline{\beta}$.

\adjustbox{width=.9\textwidth}{
\begin{tikzcd}[ampersand replacement=\&]
	{c\times_{\ctg{B}}p} \&\& {\ctg{E}} \&\& {c\times_{\ctg{B}}p} \&\& {\ctg{E}} \\
	\& {c'\times_{\ctg{B}}p} \&\& {=} \&\& {c'\times_{\ctg{B}}p}
	\arrow[from=2-6, to=1-7]
	\arrow[""{name=0, anchor=center, inner sep=0}, from=1-5, to=1-7]
	\arrow[""{name=1, anchor=center, inner sep=0}, from=1-5, to=2-6]
	\arrow[""{name=2, anchor=center, inner sep=0}, "g"', curve={height=18pt}, from=1-5, to=2-6]
	\arrow[""{name=3, anchor=center, inner sep=0}, from=1-1, to=1-3]
	\arrow[from=2-2, to=1-3]
	\arrow["g"', curve={height=18pt}, from=1-1, to=2-2]
	\arrow["{\alpha\times_{\ctg{B}}p}"{description, pos=0.4}, shorten >=3pt, Rightarrow, from=2-6, to=0]
	\arrow["\beta"', shorten >=3pt, Rightarrow, from=2-2, to=3]
	\arrow["{\overline{\beta}}"', shorten <=3pt, shorten >=3pt, Rightarrow, from=2, to=1]
\end{tikzcd}
}
\end{enumerate}
\end{thm}
\begin{proof}
If $p$ is a fibration with cleavage $S$, we can define the functor
\[
\id\times_\ctg{B}p \colon c\times_\ctg{B}p \to c'\times_\ctg{B}p,\quad (X,A)\mapsto (X,S(A,\alpha_X))
\]
reindexing $A$ along $\alpha_X\colon c'X\to cX=pA$. All desired squares commute. Moreover, we have a natural transformation $\alpha\times_\ctg{B}p\colon p^* c' \circ \id\times_\ctg{B}p \Rightarrow p^* c$ that on components is defined as follows
\[
(\alpha\times_\ctg{B}p)_{(X,A)} = s_{A,\alpha_X} \,.
\]
In particular, since each $s_{A,\alpha_X}$ is cartesian we have that the pair $(\id\times_\ctg{B}p,\alpha\times_\ctg{B}p)$ enjoys the desired universal property: the induced unique vertical arrows assemble into the necessary $\overline{\beta}$.

Conversely, let $p$ a functor and consider the trivial 2-cell $\dom\Rightarrow\cod$, then there exists a terminal sharp lifting as below,
\[\begin{tikzcd}[ampersand replacement=\&]
	{\cod\times_{\ctg{B}}p} \&\& {\ctg{E}} \\
	\& {\dom\times_{\ctg{B}}p} \\
	\\
	{\ctg{B}\due} \&\& {\ctg{B}} \\
	\& {\ctg{B}\due}
	\arrow["p", from=1-3, to=4-3]
	\arrow[""{name=0, anchor=center, inner sep=0}, "\cod", shift left=2, from=4-1, to=4-3]
	\arrow["\dom"', from=5-2, to=4-3]
	\arrow["\id"'{pos=0.3}, curve={height=12pt}, from=4-1, to=5-2]
	\arrow["{p^*\dom}"'{pos=0.4}, from=2-2, to=1-3]
	\arrow[""{name=1, anchor=center, inner sep=0}, "{p^*\cod}"{pos=0.4}, from=1-1, to=1-3]
	\arrow["{\id\times_{\ctg{B}}p}"'{pos=0.2}, curve={height=12pt}, dashed, from=1-1, to=2-2]
	\arrow["\alpha"{pos=0.4}, shorten >=4pt, Rightarrow, from=5-2, to=0]
	\arrow["{\alpha\times_{\ctg{B}}p}"{pos=0.4}, shorten >=3pt, Rightarrow, dashed, from=2-2, to=1]
\end{tikzcd}\]
therefore $\id\times_{\ctg{B}} p$ maps a pair $(A,\sigma\colon\Theta \to pA)$ to a pair $(B,\sigma\colon\Theta \to pA)$ with $pB=\Theta$, and there is a morphism
\[
(\alpha\times_{\ctg{B}}p)_{(A,\sigma)}\colon B\to A
\]
in $\ctg{E}$ over $\sigma$. We denote $(\id\times_{\ctg{B}} p) (A,\sigma)=S(A,\sigma)$ and $(\alpha\times_{\ctg{B}}p)_{(A,\sigma)}=s_{A,\sigma}$. It is cartesian because any other map over $\sigma\colon \Theta\to pA$ is part of another $\sharp$-lifting $(g,\beta)$ of $\alpha$ along $p$ and since $(\id\times_{\ctg{B}} p, \alpha\times_{\ctg{B}} p)$ is terminal with respect to this property, the unique induced $\overline{\beta}$ produces a suitable  unique vertical map into $S(A,\sigma)$.
\end{proof}

\begin{notat}[Substitution]
For the time being, and to avoid continuous explicit reference to a given cleavage for each fibration involved, we write $A[\sigma]$ for what was called $S(A,\sigma)$ up to this point. 
\end{notat}

See \Cref{onsubst} for what these imply for judgemental theories, for the moment we only prove a couple of technical results.

\begin{lem}[$\sharp$-lifting of cartesian functors]
Consider a fibration $h$ and a 2-cell $\lambda^\sharp$ as follows, and apply the construction in \Cref{lifting}.
\[\begin{tikzcd}[ampersand replacement=\&]
	{\mathbb{F}.\mathbb{H}} \&\&\& {\mathbb{H}} \\
	\& {\mathbb{G}.\mathbb{H}} \\
	{\mathbb{F}} \&\&\& \ctx \\
	\& {\mathbb{G}}
	\arrow["g"', from=4-2, to=3-4]
	\arrow[""{name=0, anchor=center, inner sep=0}, "f", from=3-1, to=3-4]
	\arrow["\lambda"', curve={height=6pt}, from=3-1, to=4-2]
	\arrow["{g.h}"', from=2-2, to=1-4]
	\arrow[""{name=1, anchor=center, inner sep=0}, "{f.h}", from=1-1, to=1-4]
	\arrow["{h^*\lambda}"'{pos=0.6}, curve={height=6pt}, dashed, from=1-1, to=2-2]
	\arrow["h", from=1-4, to=3-4]
	\arrow["{\lambda^\sharp}"'{pos=0.6}, shorten <=2pt, shorten >=4pt, Rightarrow, from=4-2, to=0]
	\arrow["{h^*\lambda^\sharp}"{pos=0.2}, shorten <=2pt, shorten >=4pt, Rightarrow, dashed, from=2-2, to=1]
\end{tikzcd}\]
If $\lambda$ preserves cartesian maps, then so does $h^*\lambda$.
\end{lem}
\begin{proof}
Consider a morphism $a=(a_1,a_2)\colon (F',H')\to(F,H)$ in $\mathbb{F}.\mathbb{H}$, meaning a pair $a_1\colon F'\to F$ in $\mathbb{F}$ and $a_2\colon H'\to H$ in $\mathbb{H}$ such that $f(a_1)=\sigma=h(a_2)$. One can check (see, for example, \cite[Proposition 2.6]{comprehensioncats}) that this is cartesian with respect to $h\circ (f.h)$ if and only if both $a_1$ is $f$-cartesian and $a_2$ is $h$-cartesian. The latter is equivalent to saying that $a_2$ is of the form $a_2=\overline{\sigma}\colon H[\sigma]\to H$. Now consider that the functor $h^*\lambda$ acts as follows
\[
(a_1,a_2)\colon (F',H')\to(F,H) \quad\mapsto\quad (\lambda a_1,a_2^!)\colon (\lambda F',H'[\lambda^\sharp_{F'}])\to (\lambda F,H[\lambda^\sharp_{F}])
\]
with $a_2^!$ the unique map induced by naturality of $\lambda^\sharp$ at $h(a_2)$. Assume that $a$ is cartesian, then we end up having
\[\begin{tikzcd}[ampersand replacement=\&]
	{H[\sigma][\lambda^\sharp_{F'}]} \&\& {H[\sigma]} \\
	\& {H[\lambda^\sharp_F]} \&\& H \\
	{\Theta'} \&\& \Theta \\
	\& {\Gamma'} \&\& \Gamma
	\arrow["{\overline{\sigma}}", from=1-3, to=2-4]
	\arrow["\sigma", from=3-3, to=4-4]
	\arrow["{\lambda^\sharp_F}"', from=4-2, to=4-4]
	\arrow["{\lambda^\sharp_{F'}}", from=3-1, to=3-3]
	\arrow["{\sigma'}"', from=3-1, to=4-2]
	\arrow["{\overline{\lambda^\sharp_{F'}}}", from=1-1, to=1-3]
	\arrow["{\overline{\lambda^\sharp_F}}"', from=2-2, to=2-4]
	\arrow[dashed, from=1-1, to=2-2]
\end{tikzcd}\]
therefore $a_2^!=\overline{\sigma}^!$ is itself cartesian. Hence if $\lambda$ preserves cartesian maps, then so does $h^*\lambda$.
\end{proof}

\begin{rem}[$\sharp$-lifting is cartesian]
The natural transformation $h^*\lambda^\sharp$ has $h$-cartesian components.
\end{rem}
\begin{proof}
This is actually trivial by definition of $h^*\lambda^\sharp$: in fact, it acts as
\[
(h^*\lambda^\sharp)_{(F,H)}=\overline{\lambda^\sharp_F}\colon H[\lambda^\sharp]\to H \,.
\]
\end{proof}

\section{Judgement calculi} \label{calculi}

In the previous section we have introduced judgemental theories, very concrete mathematical objects for which we have presented a suggestive notation referencing some logical intuition. This section is devoted to grounding that intuition and showing that each (pre)judgemental theory is a categorical version of a proof assistant or, more technically, something that supports the categorical semantics for the specification of a type system. We will see how a judgemental theory automatically produces a deductive system via a process of translation. Actually, a judgemental theory is intrinsically a calculus of deduction in a very precise sense.


This section will describe a way to translate the data of a judgemental theory $(\ctx, \classof{J,R,C})$ into a judgement calculus.

 \begin{minipage}{0.39\textwidth}
\[\begin{tikzcd}
	{\mathbb{U}.\Delta{\mathbb{U}}} && {\mathbb{U}} \\
	& {\ctx}
	\arrow["\Pi"{description}, color={rgb,255:red,167;green,42;blue,42}, from=1-1, to=1-3]
	\arrow[from=1-1, to=2-2]
	\arrow[from=1-3, to=2-2]
\end{tikzcd}\]
 \end{minipage}
 \begin{minipage}{0.60\textwidth}
     \begin{prooftree}\AxiomC{$\Gamma \vdash A\; {\tt Type}$}\AxiomC{$\Gamma.A \vdash B\; {\tt Type}$}\LeftLabel{($\Pi$F)}\BinaryInfC{$\Gamma \vdash \Pi_A B \; {\tt Type}$}\end{prooftree}
 \end{minipage}

Of course, such a process of translation requires an almost formal definition of judgement calculus, which must be flexible enough to encode the usual calculi that are used in type theory and in proof theory. For the reasons expressed in the introduction, this is a non-trivial task.

\subsection{Prolegomena} \label{prolegomena}
As we have hinted in the introduction, a very general definition of deductive system or \textit{calculus} is much easier to describe than to actually define.
Of course, one can make reference to \cite{restall2002introduction}, or to \cite{martin1984intuitionistic}, or to \cite{harper1993framework}, or to several variations of this notion, but there is no unified take we find satisfying. In this subsection we go through a critical analysis of the deductive systems of a dependent type theory and of a proof theory to better motivate the choices of the next subsection.

\subsubsection{The deductive system of a DTT} \label{calculusDTT}
We consider the problem of defining the semantics of the underlying signature, judgements and rules defining a formal calculus of a dependent type theory based on Martin-Löf's type theory. There are indeed several approaches in the literature, and the very notion of type theory is somehow (intentionally) fuzzy. We would go as far as to say that a complete agreement on the matter does not exist. Of course, this flexibility is part of the richness of this theory. The informality in the definition of rule and type constructor is one of the reasons for which the topic of (categorical) semantics for dependent type theory is both so popular and so useful in the theoretical research on dependent type theory.
Most sources would probably agree that to declare (the calculus of) a dependent type theory means to specify three boxes of data.
\begin{itemize}
    \item[(S)] Syntax (contexts, types, terms): a theory of dependent types is -informally- a formal system dealing with {\it types} and {\it terms} in {\it context}. From a symbolic point of view, these are a bunch of glyphs that we use as atoms of our language.
    $$\Gamma \quad \quad \quad \quad  A \quad \quad \quad \quad  a:A$$
    \item[(J)] Judgements (about contexts, types, terms): a judgement is a very simple sentence made up of symbols from the syntax, and whose intention is to somehow bound together pieces of atomic data. The most simple type theories of the sort we refer to present three possible (kinds of) judgements,
    $$\vdash\Gamma\,{\tt ctx} \quad\quad \Gamma \vdash A\; {\tt Type} \quad\quad \Gamma \vdash a:A$$
    which are informally interpreted as \textit{$\Gamma$ is a context}, \textit{$A$ is a type in context $\Gamma$}, \textit{$a$ is a term of type $A$ in context $\Gamma$}.
    \item[(R)] Rules (to declare new types, terms, and interact with the syntax): Finally, we should be able to interact with and declare a type. For those that are acquainted with a programming language, this need is completely evident. Indeed we might want to introduce a type which is constructed from other types.
    \vspace{-0.3cm}

    \begin{center}
    \begin{minipage}{0.49\textwidth}
    \begin{prooftree}\AxiomC{$\Gamma\vdash a:A$}\AxiomC{$\Gamma.A\vdash B\; {\tt Type}$}\LeftLabel{(DTy)}\BinaryInfC{$\Gamma\vdash B \la a \ra \; {\tt Type}$}\end{prooftree}
    \end{minipage}
     \end{center}

    Depending on the complexity of the theory, beyond a bunch of basic rules (like type and term dependency (\Cref{cetd})), we find type constructors. Type constructors are packages of rules, labelled by their feature, that allow to construct new types from old ones. Below we list the inescapable labels, for a constructor whose name is - say - $\Phi$.
        \begin{itemize}
        \item[$\Phi$\textsc{F}] Some {\it formation rule(s)}, presenting the type. They specify under which circumstances we can assume it to exist (or, from the point of view of programming languages, we can form it). By circumstances, we usually refer to syntactic data.
        \item[$\Phi$\textsc{I}] Some {\it introduction rule(s)}, producing the canonic terms of a such type. Given a set of syntactic data, they tell how to cook up a term of the new type.
        \item[$\Phi$\textsc{E}] Some {\it elimination rule(s)}, specifying the interaction between a term of the new type and the terms of the types that contributed to the formation of the new type.
        \end{itemize}
	Additionally, based on the computational semantics associated to the theory one wishes to consider, one needs to describe how introduction and elimination interact with one another, meaning to provide suitable {\it conversion rules}.

    These three packages of data reflect the necessities of a type theory: indeed, type theory emerged as a foundational framework, but from a cultural point of view its history is intertwined with that of programming languages. This deep interaction has shaped several aspects of type theory, we will see this especially in the declarative and interactive nature of the Rules box. Let us stress on the fact that, besides these informal distinctions, there is no formal definition of a type constructor, nor of a rule.
\end{itemize}

While the three boxes of Syntax, Judgements, and Rules are definitely there in any type theory, the list of rules, judgments, and the sort of symbols that inhabit them is subject to major choices. Even those that we have listed can be seen as somewhat arbitrary. Still, we believe that in any reasonable type theory the data above will be included. In most of the concrete instances, type theories are even richer than what we have listed above:

\begin{itemize}
    \item  \textit{morphisms of contexts} are usually added to Syntax;
    \item  \textit{definitional equality} is usually added to Judgements;
    \item  \textit{$\beta$- and $\eta$-computation} are almost always added to Rules,  and through definitional equality they determine how introduction and elimination interact with one another. Also, we will see to that our type theory has \textit{context formation}, which stands for a set of rules that form fresh contexts from existing types. Finally, if the syntax is enriched with morphisms of contexts, there might be rules regulating their interaction with judgements (that would be \textit{substitution}).
\end{itemize}

A vast majority of computer scientists and type theorists would probably classify $\beta$- and $\eta$-computation as an essential feature of a type theory.

\subsubsection{Natural deduction} \label{calculusproof}
As it was said in the introduction, natural deduction has already been shown to be fittingly translatable in the language of types \cite{Martin-Lof1996-MAROTM-7}, but we here describe its interpretation separately for multiple reasons:
\begin{enumerate}
\item on one hand, natural deduction for first-order logic has had a greater fortune in being studied and \emph{employed} in our schools and universities, and it is the one that we believe is understood best among most people, therefore
\item we believe that its \quotmarks{familiarity} makes it easier for the reader to connect the categorical syntax for the intuition of what the rules should be, moreover
\item such a familiarity allows us the freedom to describe more rules, and the practice of such an encoding is the main aim of this paper.
\end{enumerate}
The following presentation is mostly inspired by \cite[Def 2.18]{restall2002introduction}, but it is coherent with the treatment of \cite{negri2008structural} and \cite{troelstra2000basic} too.

When specifying a natural deduction calculus we provide three boxes of data:
\begin{itemize}
    \item[(S)] Syntax (variables, formulae): a natural deduction calculus is -informally- a formal system dealing with \textit{variables}, \textit{lists of formulae} and \textit{formulae}. From a symbolic point of view, these are glyphs that will be the atom of our calculus,
    $$x \quad \quad \Gamma \quad \quad  \phi.$$
    Often punctuation symbols as the semicolon \textit{;} are used too to combine the symbols.
    \item[(S)] Sequents: a sequent is a very simple sentence made up of symbols from the syntax, and whose intention is to specify an entailment relation between the data on the left and the data on the right of the entailment symbol.
    $$x; \Gamma \vdash \phi.$$
    For example, the sequent above could be read \textit{the list of formulae in $\Gamma$ entails the formula $\phi$, and they all have (at most) free variables in $x$.}
    \item[(R)] Rules: in natural deduction rules are used to state atomic consequences, they transform a family of sequents into a (family of) sequent(s).
    \begin{center}
              \begin{prooftree}\AxiomC{$x;\Gamma \vdash \phi$}\AxiomC{$x;\Gamma,\phi \vdash \psi$}\LeftLabel{(Cut)}\BinaryInfC{$x;\Gamma \vdash \psi$}\end{prooftree}
    \end{center}
    Traditionally, there is a distinction between \textit{structural rules} \cite[2.23]{restall2002introduction} and \textit{other rules}. Referring to \cite[pag. 26]{restall2002introduction}, the structural rules influence what we can prove. The more structural rules you have, the more you will be able to prove. The other rules are more in the spirit of type constructors and they account for the behavior of the logical operators, like $\wedge, \vee, \forall, \exists$. In modal logics, they can account for the behavior of modal operators too.
\end{itemize}
It is pretty intuitive that we can treat a sequent as a form of judgement. This just amounts to a re-tuning of our intuition with respect to the way we are used to read sequents. On the other hand it is not entirely trivial to find a precise correspondence between the rules of proof theory and the constructors of type theory. For example, for the reason that there is not a precise definition of constructor, nor a classification of them.

\subsubsection{Judgement calculi} \label{calcus}
Given the discussion above, our challenge is pretty clear: how to accommodate extensional type constructors, connectives, and deduction rules in a conceptually unified and technically coherent framework? Provide that we can see sequents as judgements, how do we formally deal with their manipulation from a semantic point of view? Let us dive into the definitions. For us, to declare a judgement calculus means to specify three boxes of data:
\begin{itemize}
    \item[(S)] Syntax (contexts and objects);
    \item[(J)] Judgements (acts of knowledge \textit{bound} to a context and \textit{pertaining to} a (list of) object(s));
    \item[(R)] Rules (transforming judgements into other judgements).
\end{itemize}

\subsection{Syntax} \label{syn}
Let $(\ctx, \classof{J,R,C})$ be a (pre)judgmental theory. Then its corresponding judgement calculus has in its Syntax box letters for each context and each object in a judgement classifier.
\begin{center}
\begin{tabular}{ c c c }
\hline
 $\Gamma$ && $\Gamma\in\ctx$\\
 $F$ && $F\in \mathbb{F}$ \\
\hline
\end{tabular}
\end{center}
 \subsection{Judgements}\label{judgement}
Judgemental calculi include two main kinds of judgement for each $\mathbb{F} \in \classof{J}$.
\begin{itemize}
    \item The first kind of judgement acknowledges  a $\mathbb{F}$-empirical evidence and clarifies the status of an object. One can see it as a kind of \textit{Tarskian snow} for our approach, meaning something that fulfills Tarski's requirement for something to \quotmarks{characterize unambiguously the class of those words and expressions which are to be considered meaningful} \cite{tarski1956concept}. For $F \in f^{-1}(\Gamma)$, then we find in our set of judgement the writing \[\Gamma \vdash F \; \mathbb{F}.\]
    This can be understood as \textit{Given $\Gamma$, $F$ exists} or \textit{Given $\Gamma$, $G$ is green}, or \textit{Given $\Gamma$, $M$ is made of marble}. In the case of the same category $\mathbb{F}$ appearing as the domain of two different judgements, we might use $\Gamma \vdash F\; \mathbb{F}(f)$.
    \item The second kind of judgement is an equality checker for the equality classified by the judgement. We will write  \[\Gamma \vdash F =_{\mathbb{F}} F',\] when $F, F' \in f^{-1}(\Gamma)$ and $F = F'$. This could be read as \textit{Given $\Gamma$, $F$ and $F'$ are indistinguishable by existence}\footnote{This could be actually read \textit{Are identical.}}, or  \textit{Given $\Gamma$, $G$ and $G'$ are indistinguishable by green.} Notice that the interpretation of the notion of equality is relative to the choice of the classifier. If we were to look at something classifying types in a type theory, the equality would (and will, in \Cref{secdtt}) be indistinguishability up to computations.
\end{itemize}

In the table below, we find on the left column the judgement and on the right its translation in terms of the judgemental theory.
\begin{center}
\begin{tabular}{ c c c }
\hline
 $\Gamma_1 \vdash F_1 \; \mathbb{F} \quad ... \quad \Gamma_n \vdash F_n\; \mathbb{F}$ && $(F_1 ... F_n) \in f^{-1}(\Gamma_1)\times ... \times f^{-1}(\Gamma_n)$ \\
  $\Gamma \vdash F =_{\mathbb{F}} F'$ && $F, F' \in f^{-1}(\Gamma)$ and $F = F'$ \\
\hline
\end{tabular}
\end{center}

It might seem that such simple judgements do not guarantee much in terms of expressiveness. This is in fact far from the truth! Recall that a judgemental theory is closed under finite limits and several constructions, thus we obtain an incredible variety of complex judgements.

\begin{rem}[On notions of equality and the relationship between theory and meta-theory]
Notice that all choices made here are to consider as \quotmarks{external}, in the sense that they constitute the building blocks of our calculus. They do not prevent from having, say, an identity judgement \textit{in} a judgemental theory, see for example \Cref{idty}.
\end{rem}

\begin{disc}[Nested judgements: pullbacks]\label{complexjcp}
Let $(\ctx, \classof{J, R, C})$ be a judgemental theory and consider a judgement of the form, \[\Gamma \vdash H.\lambda F \quad \mathbb{H}.\lambda\mathbb{F}.\]
We will see that such a judgement classifies a nested family of judgements, depending on the data of $\mathbb{H,F}$ and $\lambda$.
\[\begin{tikzcd}[ampersand replacement=\&]
	\bullet \\
	\& {\mathbb{H}.\lambda\mathbb{F}} \&\& {\mathbb{F}} \\
	\\
	\& {\mathbb{H}} \& {\mathbb{G}} \& {\ctx}
	\arrow["f"{description}, from=2-4, to=4-4]
	\arrow["g"{description}, from=4-3, to=4-4]
	\arrow["\lambda", from=4-2, to=4-3]
	\arrow["{f.g\lambda}"{description}, dotted, from=2-2, to=4-2]
	\arrow["\ulcorner"{anchor=center, pos=0.125}, draw=none, from=2-2, to=4-3]
	\arrow["{g\lambda.f}"{description}, dotted, from=2-2, to=2-4]
	\arrow["{H.\lambda F}"{description}, from=1-1, to=2-2]
	\arrow["H"{description}, curve={height=12pt}, dashed, from=1-1, to=4-2]
	\arrow["F"{description}, curve={height=-12pt}, dashed, from=1-1, to=2-4]
\end{tikzcd}\]
\noindent By inspecting the pullback diagram that defines $\mathbb{H}.\lambda\mathbb{F}$ we can see that judgements of this form are in bijection with pairs of judgements of the form, \[\Gamma \vdash H \; \mathbb{H} \qquad g\lambda H \vdash F\; \mathbb{F}.\]
This means that we are entitled to see the line above, which is composed of two related but separate judgements (in possibly different contexts!), as a single one (in context $\Gamma$). We call judgements of this form {\it nested}.
Notice that, depending on what we want to express, we could say that the relation binding $H$ to $\Gamma$ (hence the judgement classifier with domain $\mathbb{H}$ in the line above) is either $g\lambda$ itself, therefore forcing both $H$ and $F$ to have the same context, or some other $h$. Still, we chose to present the pullback in its most general form.

As a string of symbols, notice that a nested judgement is an informal judgement, in the sense that it is not well defined in our framework. Despite this, we will feel free to use notations as that above because they are a bit easier to parse from a human perspective.
This means that for the rest of the paper we will write

\begin{prooftree}\AxiomC{$\Gamma \vdash H.\lambda F\; \mathbb{H}.\lambda\mathbb{F}$}\doubleLine\UnaryInfC{$\Gamma \vdash H \; \mathbb{H} \qquad g\lambda H \vdash F\; \mathbb{F}$}\end{prooftree}

\noindent to intend that the synthetic judgment below is an \textit{alias} for the nested judgement above, which is defined in our context.

Our notation ($\mathbb{H}.\lambda\mathbb{F}$)  retains almost all the information needed to predict the kind of nested judgements we classify. This also explains why we write composed judgements the way we do: we think of the component in $\mathbb{H}$ to be free, while that in $\mathbb{F}$ is bounded via the map $\lambda$.
\end{disc}

\begin{exa}[Lists]
Let $\mathbb{X}$ be a judgement classifier, and consider $\mathbb{X}^n$, which is given by the (wide) pullback below,

\[\begin{tikzcd}
	& {\mathbb{X}^n} \\
	{\mathbb{X}} & {...} & {\mathbb{X}} \\
	& {\ctx}
	\arrow["x"{description}, from=2-1, to=3-2]
	\arrow["x"{description}, from=2-2, to=3-2]
	\arrow["x"{description}, from=2-3, to=3-2]
	\arrow["{\pi_n}"{description}, from=1-2, to=2-3]
	\arrow["{\pi_1}"{description}, from=1-2, to=2-1]
	\arrow[from=1-2, to=2-2]
\end{tikzcd}\]
then the judgement $\Gamma \vdash X_1.\cdots.X_n \; \mathbb{X}^n$ can be interpreted as a list of judgements, as described below.

\begin{prooftree}\AxiomC{$\Gamma \vdash X_1.\cdots.X_n \; \mathbb{X}^n$}\doubleLine\UnaryInfC{$\Gamma \vdash X_1 \; \mathbb{X} \qquad \cdots \qquad \Gamma \vdash X_n \; \mathbb{X}$}\end{prooftree}

\noindent Notice that the fact that a judgemental theory is by definition closed under finite products implies that these judgements are always available.
\end{exa}

\begin{exa}[Composable arrows]
Let $\mathbb{C}$ be a category and consider the following pullback as on the left. The resulting nested judgement, then, reads as on the right and classifies composable arrows in $\mathbb{C}$.

\begin{minipage}{0.49\textwidth}
\[\begin{tikzcd}
	{\mathbb{C}\due\dom.\cod\mathbb{C}\due} & {\mathbb{C}\due} \\
	{\mathbb{C}\due} & {\mathbb{C}}
	\arrow["{\dom}", from=1-2, to=2-2]
	\arrow["{\cod}"', from=2-1, to=2-2]
	\arrow[dashed, from=1-1, to=2-1]
	\arrow[dashed, from=1-1, to=1-2]
	\arrow["\ulcorner"{anchor=center, pos=0.125}, draw=none, from=1-1, to=2-2]
\end{tikzcd}\]
\end{minipage}
\begin{minipage}{0.49\textwidth}
\begin{prooftree}\AxiomC{$C \vdash (f,g)\;\mathbb{C}\due\dom.\cod\mathbb{C}\due$}\doubleLine\UnaryInfC{$C \vdash f \; \mathbb{C}(\cod)\; \qquad C \vdash g \; \mathbb{C}(\dom)$}\end{prooftree}
\end{minipage}
The middle ground is given by the actual {\it middle} object $f$ and $g$ share. Notice that, though the context (i.e.\! the object) is the same, their bounds to it are very different (that is, respectively $\cod$ and $\dom$).
\end{exa}

\begin{exa}[Toy Martin-Löf type theory]\label{toytt.j}
In the judgemental theory generated by that in \Cref{toytt}, we now have a way of coding, for example, pairs of types in the same context. This is achieved by the pullback $\uu u.u \uu$.

\begin{minipage}{0.49\textwidth}
\[\begin{tikzcd}
	{\uu u.u \uu} & {\uu} \\
	{\uu} & {\ctx}
	\arrow["{u}", from=1-2, to=2-2]
	\arrow["{u}"', from=2-1, to=2-2]
	\arrow[dashed, from=1-1, to=2-1]
	\arrow[dashed, from=1-1, to=1-2]
	\arrow["\ulcorner"{anchor=center, pos=0.125}, draw=none, from=1-1, to=2-2]
\end{tikzcd}\]
\end{minipage}
\begin{minipage}{0.49\textwidth}
\begin{prooftree}\AxiomC{$\Gamma \vdash (A,A')\;\uu u.u \uu$}\doubleLine\UnaryInfC{$\Gamma \vdash A \; \uu\; \qquad \Gamma \vdash A' \; \uu$}\end{prooftree}
\end{minipage}
\end{exa}

The examples above are not particularly interesting, though we believe they give an intuition of the expressive power of nested judgements. We hope \Cref{secdtt} will be definitive proof.

\begin{disc}[Nested judgements: equalizers]\label{complexjce}
Similarly to the previous case, equalizers classify nested judgements of the kind below.

\begin{minipage}{0.39\textwidth}
\[\begin{tikzcd}
	{\mathbb{E}(\lambda,\lambda')} & {\mathbb{F}} & {\mathbb{G}}
	\arrow["\lambda", shift left=1, from=1-2, to=1-3]
	\arrow["{\lambda'}"', shift right=1, from=1-2, to=1-3]
	\arrow["e", dashed, from=1-1, to=1-2]
\end{tikzcd}\]
\end{minipage}
\begin{minipage}{0.59\textwidth}
     \begin{prooftree}\AxiomC{$\Gamma\; \vdash F\; \mathbb{E}(\lambda,\lambda')$}\doubleLine\UnaryInfC{$\Gamma\vdash F\; \mathbb{F} \qquad \Gamma \vdash \lambda F =_\mathbb{G} \lambda' F $}\end{prooftree}
\end{minipage}
\end{disc}

\subsection{Rules} \label{rulesss}

Let $(\ctx, \classof{J}, \classof{R}, \classof{P})$ be a judgemental theory. Consider judgements and a rule as in the diagram below, for each rule $\lambda : \mathbb{F} \to \mathbb{G}$ and each judgement $\Gamma \vdash F \; \mathbb{F}$, we will write as follows (on the right).

\begin{center}
   \begin{minipage}{0.39\textwidth}
\[\begin{tikzcd}
	{\mathbb{F}} && {\mathbb{G}} \\
	\\
	{\ctx} && {\ctx}
	\arrow["\lambda", from=1-1, to=1-3]
	\arrow["f"', from=1-1, to=3-1]
	\arrow["g", from=1-3, to=3-3]
\end{tikzcd}\]
 \end{minipage}
 \begin{minipage}{0.3\textwidth}
            \begin{prooftree}\AxiomC{$\Gamma\; \vdash F\; \mathbb{F}$}\LeftLabel{$(\lambda)$}\UnaryInfC{$g\lambda F \vdash \lambda F \; \mathbb{G}$}\end{prooftree}\label{rule}
 \end{minipage}\hfill
\end{center}

From a technical level, this is just a compact way to organize the data of the functoriality of $\lambda$. Indeed it is true that $\lambda F \in g^{-1}(g\lambda F)$, so that $g\lambda F \vdash \lambda F \; \mathbb{G}$ is actually a judgement in our framework. This is the only kind of rule that we admit in our judgemental calculi, and in a sense all the rules are the same, there are no intrinsic labels like \textit{structural, introduction, elimination}, and so on. Yet, similarly to the case of judgements, the closure under finite limits guarantees an incredible richness of rules, as we will see for the rest of the subsection.

\begin{exa}[Toy Martin-Löf type theory]\label{toytt.r}
The rule $\Sigma$ from \Cref{toytt} now reads as follows.

\begin{prooftree}\AxiomC{$\Gamma\vdash (a,A)\;\duu$}\LeftLabel{($\Sigma$)}\UnaryInfC{$u\Sigma(a,A)\vdash \Sigma (a,A)\;\uu$}\end{prooftree}
\vspace{.01cm}

Now recall that $u\Sigma=\du$, so the behavior of a policy is implied: see \Cref{secdtt} for more on this. We follow the intuition provided for all the data of the judgemental theory in \Cref{toytt} and translate it in the usual type-theoretic strings of symbols. Then it reads as follows

\begin{prooftree}\AxiomC{$\Gamma\vdash a:A$}\LeftLabel{($\Sigma$)}\UnaryInfC{$\Gamma\vdash A\; \texttt{Type}$}\end{prooftree}
\vspace{.01cm}

\noindent and depicts the typing rule. We thoroughly detail this process of translation in \Cref{dictionnaire}.
\end{exa}

\begin{rem}[Rules with many outputs] \label{tonested}
The notion of nested judgement \cref{complexjcp} and of our calculus as a whole have one additional very useful feature, and that is allowing for multiple consequents {\it simultaneously}. In fact, it is very common that one might want to write rules that deduce several judgements from the same (set of) judgement(s), but writing it organically is somewhat frowned upon, so that one usually encounters a proliferation of rules (for example two elimination rules in \Cref{idty} and in \Cref{connrules}). While we mostly follow the tradition with regard to this, the attentive reader will see that in fact they are always the product of the ``break-down'' of a single nested judgement. We make this explicit once in \Cref{triangid}.
\end{rem}

\begin{rem}[Soundness and completeness]
When one looks at this section as a whole, that is the process of producing a graphical/grammatical bookkeeping of the categorical properties of the judgemental theory, organized in the form of a collection of judgements and deductions,  it is natural to raise the question \textit{which deductions are actually produced by a judgemental theory?} There are two possible approaches to this question.

\begin{itemize}
    \item The first approach is to refine our notion of calculus, and give a precise definition of what we mean by deductive system. Our presentation is not that far from a formalization. Then, one would say that the content of this section provides a kind of soundness/correctness theorem for the categorical syntax, and one could try to provide a completeness result that characterize all the possible judgements and deductions.
    \item The second approach is to claim that the question contains an implicit bias towards grammatical/syntactic representations of deductions, and that, in a sense, the categorical language already provides the grammar the reader is looking for, while the calculus in this section only represents a way to make it more digestible to the \textit{grammarian}.
\end{itemize}
Both the approaches are valid, one maybe making a more political statement, and the other being more prone to the classical tradition. Because the author herself does not entirely agree on the path to follow, and because this paper already contains a lot of material, we choose not to invest more on this question in the present work. We will be greatly interested in developing this more along the line.
\end{rem}

\subsection{Policies}\label{syntax_policies}

Recall that a policy is a 2-cell as follows
\[\begin{tikzcd}[ampersand replacement=\&]
	{\mathbb{F}} \&\& {\mathbb{G}} \\
	\& {\ctx}
	\arrow[""{name=0, anchor=center, inner sep=0}, "f"', curve={height=6pt}, from=1-1, to=2-2]
	\arrow["g", curve={height=-6pt}, from=1-3, to=2-2]
	\arrow["\lambda", from=1-1, to=1-3]
	\arrow["{\lambda^\sharp}"{description}, shorten >=9pt, Rightarrow, from=1-3, to=0]
\end{tikzcd}\]
with $f,g$ judgement classifiers and $\lambda$ a rule. This additional data contains that of the rule $\lambda$, hence
\vspace{-0.5cm}
 \begin{prooftree}\AxiomC{$\Gamma \vdash F\; \mathbb{F}$}\LeftLabel{$(\lambda)$}\UnaryInfC{$\Theta \vdash \lambda F \; \mathbb{G}$}\end{prooftree}
but it also establishes a relation between $\Gamma=fF$ and $\Theta=g\lambda F$, namely $\lambda^\sharp_F:\Theta\to \Gamma$. We wish our judgemental calculus to reflect this.

If $f$ is a fibration by \Cref{lifting} we can $\sharp$-lift $\lambda^\sharp$ along $f$,
\[\begin{tikzcd}[ampersand replacement=\&]
	{\mathbb{F}.\mathbb{F}} \&\&\& {\mathbb{F}} \\
	\& {\mathbb{G}.\mathbb{F}} \&\&\&\& {\mathbb{F}} \&\& {\mathbb{F}} \\
	{\mathbb{F}} \&\&\& \ctx \&\&\& \ctx \\
	\& {\mathbb{G}}
	\arrow["g"', from=4-2, to=3-4]
	\arrow[""{name=0, anchor=center, inner sep=0}, "f", from=3-1, to=3-4]
	\arrow["\lambda"', curve={height=6pt}, from=3-1, to=4-2]
	\arrow["{g.f}"', from=2-2, to=1-4]
	\arrow[""{name=1, anchor=center, inner sep=0}, "{f.f}", from=1-1, to=1-4]
	\arrow["{f^*\lambda}"'{pos=0.6}, color={rgb,255:red,214;green,92;blue,92}, curve={height=6pt}, dashed, from=1-1, to=2-2]
	\arrow["f", from=1-4, to=3-4]
	\arrow[from=2-2, to=4-2]
	\arrow[from=1-1, to=3-1]
	\arrow["\ulcorner"{anchor=center, pos=0.125}, draw=none, from=2-2, to=3-4]
	\arrow["\ulcorner"{anchor=center, pos=0.125}, draw=none, from=1-1, to=3-4]
	\arrow[""{name=2, anchor=center, inner sep=0}, "f"', curve={height=6pt}, from=2-6, to=3-7]
	\arrow["f", curve={height=-6pt}, from=2-8, to=3-7]
	\arrow["{g.f\circ f^*\lambda\circ\langle\id,\id\rangle}", from=2-6, to=2-8]
	\arrow["{\lambda^\sharp}"'{pos=0.6}, shorten <=2pt, shorten >=4pt, Rightarrow, from=4-2, to=0]
	\arrow["{f^*\lambda^\sharp}"{pos=0.2}, color={rgb,255:red,214;green,92;blue,92}, shorten <=2pt, shorten >=4pt, Rightarrow, dashed, from=2-2, to=1]
	\arrow[shorten <=9pt, shorten >=9pt, Rightarrow, from=2-8, to=2]
\end{tikzcd}\]
and get a pair $(f^*\lambda,f^*\lambda^\sharp)$ such that on a pair of objects $F,F'$ over the same context
\[
f^*\lambda\colon (F,F') \mapsto (\lambda F, F'[\lambda^\sharp_F]),
\]
hence we can use the universal property of pullbacks to precompose the policy ``on top'' with $\langle\id,\id\rangle$ to get the lax triangle on the right. This now reads as
 \begin{prooftree}\AxiomC{$\Gamma\; \vdash F\; \mathbb{F}$}\LeftLabel{$(g.f\circ f^*\lambda\circ\langle\id,\id\rangle)$}\UnaryInfC{$\Theta \vdash F[\lambda^\sharp_F] \; \mathbb{F}$}\end{prooftree}
where $\lambda^\sharp_F:\Theta\to \Gamma$.

One could detail a similar argument for covariant policies: we do not do so here because in the present work we will only encounter contravariant ones. Still, the reader can easily see how covariant rules are strictly connected to comonads, and comonads have been proven of special interest in logic (see, for example, the treatment of equality in \cite{dagnino2021doctrines}), and this is why we have carried them through all definitions and technical proofs.

\subsection{On substitution}\label{onsubst}
\Cref{syntax_policies} is a first instance of application of substitution-like properties in our setting, in it is worth noticing that the additional data of a policy can be only \emph{externalized} in our setting when the target judgement classifier is a fibration. It seems worth it, then, to describe what information - from the point of view of judgemental theories - lies under the assumption that a functor is a fibration.

Recall from \Cref{charsharplift} that for a fibration $p\colon \ctg{E}\to\ctg{B}$ there are $(p^*\id,p^*\alpha)$ such that for any other $\sharp$-lifting $(g,\beta)$ there is a unique $p$-vertical $\overline{\beta}$ satisfying $\beta=p^*\alpha\ast\overline{\beta}$.

\begin{minipage}{0.49\textwidth}
\[\begin{tikzcd}[ampersand replacement=\&]
	{\ctg{B}\due.\cod\ctg{E}} \&\& {\ctg{E}} \\
	\& {\ctg{B}\due.\dom\ctg{E}} \\
	\\
	{\ctg{B}\due} \&\& {\ctg{B}} \\
	\& {\ctg{B}\due}
	\arrow["p", from=1-3, to=4-3]
	\arrow[""{name=0, anchor=center, inner sep=0}, "\cod", shift left=2, from=4-1, to=4-3]
	\arrow["\dom"', from=5-2, to=4-3]
	\arrow["\id"'{pos=0.3}, curve={height=12pt}, from=4-1, to=5-2]
	\arrow["{\dom.p}"'{pos=0.4}, from=2-2, to=1-3]
	\arrow[""{name=1, anchor=center, inner sep=0}, "{\cod.p}"{pos=0.4}, from=1-1, to=1-3]
	\arrow["g"', curve={height=18pt}, from=1-1, to=2-2]
	\arrow["\alpha"{pos=0.4}, shorten >=4pt, Rightarrow, from=5-2, to=0]
	\arrow["\beta", shorten >=3pt, Rightarrow, from=2-2, to=1]
\end{tikzcd}\]
\end{minipage}
\begin{minipage}{0.49\textwidth}
\[\begin{tikzcd}[ampersand replacement=\&]
	{\ctg{B}\due.\cod\ctg{E}} \&\& {\ctg{E}} \\
	\& {\ctg{B}\due.\dom\ctg{E}} \\
	\\
	{\ctg{B}\due} \&\& {\ctg{B}} \\
	\& {\ctg{B}\due}
	\arrow["p", from=1-3, to=4-3]
	\arrow[""{name=0, anchor=center, inner sep=0}, "\cod", shift left=2, from=4-1, to=4-3]
	\arrow["\dom"', from=5-2, to=4-3]
	\arrow["\id"'{pos=0.3}, curve={height=12pt}, from=4-1, to=5-2]
	\arrow["{\dom.p}"'{pos=0.4}, from=2-2, to=1-3]
	\arrow[""{name=1, anchor=center, inner sep=0}, "{\cod.p}"{pos=0.4}, from=1-1, to=1-3]
	\arrow[""{name=2, anchor=center, inner sep=0}, "{p^*\id}", from=1-1, to=2-2]
	\arrow[""{name=3, anchor=center, inner sep=0}, "g"', curve={height=18pt}, from=1-1, to=2-2]
	\arrow["\alpha"{pos=0.4}, shorten >=4pt, Rightarrow, from=5-2, to=0]
	\arrow["{p^*\alpha}"{pos=0.4}, shorten >=3pt, Rightarrow, from=2-2, to=1]
	\arrow["{\overline{\beta}}"', shift left=1, shorten <=4pt, shorten >=4pt, Rightarrow, from=3, to=2]
\end{tikzcd}\]
\end{minipage}

\noindent Therefore it seems that the peculiarity of fibrations lies in the existence of a unique vertical $\overline{\beta}$. We break down its meaning in the following policy, resulting from whiskering $\overline{\beta}$ with $p\circ\dom.p$,
\[\begin{tikzcd}[ampersand replacement=\&]
	{\ctg{B}\due.\cod\ctg{E}} \&\& {\ctg{B}\due.\dom\ctg{E}} \\
	\\
	\& {\ctg{B}}
	\arrow["g", from=1-1, to=1-3]
	\arrow[""{name=0, anchor=center, inner sep=0}, "{p\circ\dom.p\circ p^*\id}"', curve={height=6pt}, from=1-1, to=3-2]
	\arrow["{p\circ\dom.p}", curve={height=-6pt}, from=1-3, to=3-2]
	\arrow["{(p\circ\dom.p)\ast\overline{\beta}}"'{pos=0.3}, shift left=2, shorten <=15pt, shorten >=15pt, Rightarrow, from=1-3, to=0]
\end{tikzcd}\]

\noindent and it is easy to see that $p\circ\dom.p\circ p^*\id$ is a fibration, therefore we can apply the discussion in \Cref{syntax_policies} and derive the following rule in our judgemental theory. With $\sigma\colon\Theta\to pA$ and $B(A,\sigma)=(\dom.p\circ g )(A,\sigma)$, given that $(A,\sigma)[\overline{\beta_{A,\sigma}}]=B(A,\sigma)$,
\begin{prooftree}\AxiomC{$\Theta \vdash (A,\sigma)\;\ctg{B}\due.\cod\ctg{E}$}\UnaryInfC{$\Theta \vdash (B(A,\sigma),\sigma) \; \ctg{B}\due.\cod\ctg{E}$}
\end{prooftree}

\noindent meaning that initiality translates to the fact that any substitution is derivable from the cartesian one.

\subsection{Logics vs Theories}

The next two sections will focus on modeling dependent type theories and natural deduction in our framework. To be precise, the data of a judgemental theory will be \textit{the same as} a \textit{theory} satisfying the specifics of an intended logic. In its current state, this is a limitation of our framework: we do not offer a modular way to specify a \textit{logic} so that the theories in such a logic are precisely the judgemental theories of a certain shape, and we can only perform this \textit{presentation} via a case by case analysis (which is precisely the content of the next two sections). That said though, this programme is not outside our general scope, and we briefly address this topic in the last paragraph of \Cref{futurejt}.

\section{Plain dependent type theory}\label{secdtt}
In this section we show what features must a judgemental theory have in order to support dependent type theory. We show it produces desired rules, and with respect to this provide some evidence of the computational power of judgemental theories. We then pin-point which rules one needs to add in order to gain typically desirable constructors, for example dependent products and identity. In doing so, we learn something about constructors in general and give a (unifying) definition of extensional type constructor.

\begin{defn}[Plain dependent type theory] \label{jdtt}
A {\it plain dependent type theory} is a substitutional judgemental theory generated by the pre-judgemental theory described by the diagram below.
\[\begin{tikzcd}
	{\dot{\mathbb{U}}} && {\mathbb{U}} \\
	& {\ctx}
	\arrow["{\dot{u}}"', from=1-1, to=2-2]
	\arrow["u", from=1-3, to=2-2]
	\arrow[""{name=0, anchor=center, inner sep=0}, "\Sigma"{description}, from=1-1, to=1-3]
	\arrow[""{name=1, anchor=center, inner sep=0}, "\Delta"{description}, curve={height=18pt}, dashed, from=1-3, to=1-1]
	\arrow["\dashv"{anchor=center, rotate=90}, draw=none, from=0, to=1]
\end{tikzcd}\]
To be precise, we mean that $\classof{J} = \{\dot{u}, u\}$ and that those are fibrations, $\classof{R} = \{\Sigma, \Delta\}$, $\classof{P}$ contains the witness of the commutativity of the solid diagram, and  both the unit and the counit $(\epsilon, \eta)$ of the adjunction $\Sigma \dashv \Delta$. Finally, we require that $(\epsilon, \eta)$ are cartesian natural transformations.  We call this \emph{pDTT}, for short.
\end{defn}

We think of $\uu$ as classifying types, $\duu$ as classifying terms, and the functor $\Sigma$ as the one performing the typing. Its adjoint $\Delta$ will interpret context extension. The choice of the greek letters $\Sigma,\Delta$ is inspired by the notation classically used for polynomials, for example in \cite[p.7]{gambino_kock_2013}.

\begin{rem}[Notational caveats]\label{quelliso}
As we mentioned in \ref{thenotation} our notation, while being very telling, sometimes hides pieces of data. For example one finds that $\duu.\Sigma\duu\cong\duu\times\duu$. This is an instance of the fact that, depending on the choice of maps along which one performs the pullbacks (and depending on the order in which one does so), one gets a classifier that is either nested, or it is not. In general, the {\it nesting degree} is subject to change. Such equations, though unpretty, will be interesting from the point of view of the theory. Each time something of this kind happens, we will state it explicitly.
\end{rem}

\subsection{From natural models to plain dtts}

Recall that a natural model in the sense of \cite{awodey_2018} is the data of
\begin{enumerate}
    \item a category $\ctx$ with terminal object;
    \item an arrow $p: \dot{U} \to U$ in the presheaf category $\catof{Psh}(\ctx)$;
    \item some \textit{representability data}. This means that for all cospans as in the diagram below, we are given an object $\Gamma.A\in\ctg{C}$, a morphism $\delta_A :\Gamma.A \to \Gamma$ in $\ctx$ and an arrow $q_A : \hirayo (\Gamma.A) \to \dot{U}$, such that the square below is a pullback.
\[\begin{tikzcd}
	{\hirayo (\Gamma.A)} && {\dot{U}} \\
	\\
	{\hirayo \Gamma} && U
	\arrow["p"{description}, from=1-3, to=3-3]
	\arrow["A"{description}, from=3-1, to=3-3]
	\arrow["{\hirayo(\delta_A)}"{description}, dashed, from=1-1, to=3-1]
	\arrow["{q_A}"{description}, dashed, from=1-1, to=1-3]
\end{tikzcd}\]

\end{enumerate}

\begin{rem}[Use of the Yoneda lemma]\label{yoneda_extensive}
When working with natural models, the Yoneda lemma is heavily used and, in particular, for a presheaf $X$ over $\ctx$ we tend to identify objects that are in a correspondence under the following (natural) bijection.
\[
X(\Gamma) \cong \catof{NatTr}(\yo\Gamma,X)
\]
When we want to avoid using such an abuse, we denote with $x$ an element of $X(\Gamma)$ and $x^*$ its corresponding natural transformation and, conversely, for a natural transformation $y$ we call $y_*$ its corresponding element. One of the advantages of dealing with judgemental theories is that such an ambiguity will be avoided entirely.
\end{rem}

\begin{thm} \label{awodeytous}
A natural model is the same thing as a plain dependent type theory where the types and terms fibrations are discrete fibrations.
\end{thm}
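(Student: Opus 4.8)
The plan is to unwind both notions through the Grothendieck construction, under which discrete fibrations over $\mathsf{ctx}$ correspond to presheaves on $\mathsf{ctx}$, and morphisms of discrete fibrations (which are automatically cartesian) to natural transformations. Under this dictionary the classifiers $u : \mathbb{U} \to \mathsf{ctx}$ and $\dot u : \dot{\mathbb{U}} \to \mathsf{ctx}$ of a jDTT with discrete classifiers become exactly a pair of presheaves $U, \dot U$ on $\mathsf{ctx}$, the rule $\Sigma$ (which lives over $\mathsf{ctx}$, i.e.\ $u\Sigma = \dot u$) becomes a natural transformation $p : \dot U \to U$, and conversely. Hence items (1) and (2) in the definition of a natural model match the ambient data of \Cref{jdtt} verbatim, and what remains is to identify the representability data of item (3) with the adjunction $\Sigma \dashv \Delta$ together with its unit and counit, and to check that the closure conditions of \Cref{mainjt} and substitutionality add nothing on the natural-model side.

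First I would remark that when $\dot u$ is a discrete fibration the requirement that $\epsilon$ and $\eta$ be cartesian is vacuous, since every morphism of a discrete fibration is cartesian; so a jDTT with discrete classifiers is precisely the datum of $U, \dot U, p$ as above together with a right adjoint $\Delta$ to $\Sigma = \int p$. From a natural model I would then build $\Delta$ by hand: by the fibrational Yoneda lemma of \Cref{Yoneda}, an object of $\mathbb{U}$ over $\Gamma$ is the same as a map $A : \hirayo\Gamma \to U$, and I set $\Delta(\Gamma, A) := (\Gamma.A,\, q_A)$ with $\Gamma.A$ and $q_A$ taken from the representability data; the action of $\Delta$ on a morphism $\sigma : (\Gamma', A\sigma) \to (\Gamma, A)$ of $\int U$ is then forced (and seen to be functorial) by the universal property of the pullback square defining $\Gamma.A$. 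The adjunction $\Sigma \dashv \Delta$ reduces to the bijection, natural in all arguments, between cones over that pullback square --- pairs $(\sigma : \Gamma' \to \Gamma,\ b \in \dot U(\Gamma')$ with $pb = A\sigma)$ --- and substitutions $\Gamma' \to \Gamma.A$, which is nothing but the universal property of the pullback transported through Yoneda; chasing the triangle identities, the counit $\epsilon_{(\Gamma,A)}$ recovers $\delta_A$ and the unit $\eta_{(\Gamma,a)}$ recovers the section of the display map determined by the term $a$.

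Conversely, from a jDTT with discrete classifiers I would read off a natural model by applying $\Delta$: set $\Gamma.A := \dot u(\Delta(\Gamma,A))$, let $q_A$ be the term component of $\Delta(\Gamma,A) \in \dot{\mathbb{U}}$, and let $\delta_A$ be the underlying substitution of $\epsilon_{(\Gamma,A)} : \Sigma\Delta(\Gamma,A) \to (\Gamma,A)$; the resulting square is then a pullback because, via $\Sigma \dashv \Delta$, a mediating map for any competing cone is exactly its adjoint transpose, which exists and is unique. Finally I would observe that the finite limits, $\sharp$-liftings and whiskerings demanded of a judgemental theory, together with the slices and the maps $\mathsf{ctx}_{/\Gamma}\to\mathbb{F}$ demanded by substitutionality, are all formed inside $\mathsf{Cat}$ out of data entirely determined by $(U,\dot U,p)$ (slices being categories of elements of representables, etc.), so passing to the generated substitutional judgemental theory neither loses nor creates information; the two passages are therefore mutually inverse up to the evident isomorphisms --- which for the adjoint $\Delta$ is moreover unique.

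The step I expect to be the crux is exactly the identification of the adjunction with the representability squares: one has to verify with care that ``right adjoint to $\int p$'' is pointwise the same as ``chosen pullback of $p$ along representables'', matching unit and counit with the term-induced section and the display map, and that naturality of the adjunction corresponds to the stability of these pullbacks under substitution. Everything else is standard Grothendieck-and-Yoneda bookkeeping, made lighter here by discreteness.
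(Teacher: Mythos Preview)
Your proof is correct and follows essentially the same route as the paper: both reduce the statement, via the Grothendieck construction, to the equivalence between representability data for $p$ and the existence of a right adjoint $\Delta$ to $\Sigma_p$, and both construct $\Delta$ and the representability squares from one another exactly as you describe. The paper packages this equivalence as a separate proposition (\Cref{reprreloaded}) stated without the discreteness hypothesis, so there cartesianness of $\epsilon,\eta$ is retained as an explicit condition and verified in the proof; your observation that this condition is automatic once $u$ and $\dot u$ are discrete (note: you need both, since $\epsilon$ lives in $\mathbb{U}$ and $\eta$ in $\dot{\mathbb{U}}$) is a legitimate shortcut in the setting of the theorem, and your final paragraph on the closure and substitutionality conditions being generated freely from the data is a point the paper leaves implicit.
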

The greatest part of the theorem relies on the following result. Recall that with respect to a discrete fibration, all maps are cartesian, hence whatever unit and counit we supply, their component will be, too.
\begin{prop}\label{reprreloaded}
Let $p: \dot{U} \to U$ a morphism of presheaves over $\ctx$ and $\Sigma_p$ its image through the Grothendieck biequivalence restricted to presheaves. The following are equivalent.
\begin{enumerate}
    \item We are provided with some representability data for $p$.
    \item The functor $\Sigma_p$ has a right adjoint $\Delta_p$.
\end{enumerate}
\end{prop}
\begin{rem}
It is evident from the discussion between page 245 and 246 of \cite{awodey_2018} that Awodey was aware of this result,  but because he only sketches the proof of the proposition above, we provide it in full.
\end{rem}
\begin{proof}[Proof of \ref{reprreloaded}]
First of all, let us briefly describe $\Sigma_p: \duu\to\uu$ in terms of $p:\dot{U}\to U$, or at least how it acts on the objects. The category $\duu$ has for objects pairs $(\Gamma,a)$ where $\Gamma$ is an object of $\ctx$ and $a\in \dot{U}(\Gamma)$. Similarly, the category $\uu$ has for objects pairs $(\Gamma,A)$ where $\Gamma$ is an object of $\ctx$ and $A\in {U}(\Gamma)$. The presheaf morphism $p$ induces a function $p_\Gamma:\dot{U}(\Gamma)\to U(\Gamma)$, therefore the (discrete) fibration morphism $\Sigma_p$ it induces maps a pair $(\Gamma,a)$ to $(\Gamma,p_\Gamma(a))$. We denote it $\Sigma_p$ in analogy with \Cref{jdtt}.
\begin{itemize}
    \item[($1 \Rightarrow 2$)] We will now construct the functor $\Delta_p$ provided that $p$ is representable.

\[\begin{tikzcd}
	{\dot{\mathbb{U}}} && {\mathbb{U}} \\
	\\
	& {\ctx}
	\arrow["{\dot{u}}"{description}, from=1-1, to=3-2]
	\arrow["u"{description}, from=1-3, to=3-2]
	\arrow[""{name=0, anchor=center, inner sep=0}, "{\Sigma_p}"{description}, from=1-1, to=1-3]
	\arrow[""{name=1, anchor=center, inner sep=0}, "{\Delta_p}"{description}, curve={height=18pt}, dashed, from=1-3, to=1-1]
	\arrow["\dashv"{anchor=center, rotate=90}, draw=none, from=0, to=1]
\end{tikzcd}\]

\noindent Consider an object $A \in \mathbb{U}$, recall that it corresponds by \Cref{yoneda_extensive} to an arrow $A^*: \hirayo uA \to U$. Then, we define $\Delta_p:A \mapsto (q_A)_*$, where the latter is obtained by the representability condition at $A^*$. On a substitution $\sigma:\Theta\to\Gamma$ we take pullbacks as depicted below.
\[\begin{tikzcd}
	& {\hirayo (\Gamma.A)} && {\dot{U}} \\
	{\hirayo(\Theta.B)} \\
	& {\hirayo \Gamma} && U \\
	{\hirayo \Theta}
	\arrow["p", from=1-4, to=3-4]
	\arrow["A"{description, pos=0.4}, from=3-2, to=3-4]
	\arrow["{\hirayo(\delta_A)}"{description, pos=0.6}, from=1-2, to=3-2]
	\arrow["{q_A}"{description, pos=0.4}, from=1-2, to=1-4]
	\arrow["{\hirayo (\sigma)}"{description}, curve={height=-12pt}, from=4-1, to=3-2]
	\arrow["B"{description}, from=4-1, to=3-4]
	\arrow[curve={height=-12pt}, dashed, from=2-1, to=1-2]
	\arrow["{\hirayo (\delta_B)}"{description, pos=0.4}, from=2-1, to=4-1]
	\arrow["{q_B}"{description}, from=2-1, to=1-4]
\end{tikzcd}\]

 We now need to show that $\Sigma_p \dashv \Delta_p$. The easiest thing is to provide the unit and the counit.
\begin{itemize}
    \item[($\epsilon$)] We want to construct an arrow $\epsilon_A : \Sigma_p \Delta_p A \to A$. We define it to be the cartesian lifting of $\delta_A$ at $A$. Now we need to show that this is a natural transformation, but that follows from the universal property of cartesian lifts. In fact, for each $s:B\to A$, the composition $s \circ u^* \delta_B$ is the cartesian lifting of $\sigma\circ \delta_B$, $u^* \delta_A \circ \Sigma_p\Delta_p s$ that of $\delta_A \circ \Delta_p(\sigma)$, and $\delta_A \circ \Delta_p(\sigma)=\sigma\circ \delta_B$, therefore, by uniqueness (up to iso) of the cartesian lifting, $u^* \delta_A \circ \Sigma_p\Delta_p s= s \circ u^* \delta_B$ too.
    \item[$(\eta)$] We want to construct an arrow $a \to \Delta_p \Sigma_p a$. This is also obtained by cartesian lifting, that of $\gamma_a$ induced by the dotted arrow in the diagram below.
\[\begin{tikzcd}
	{\hirayo \Gamma} \\
	& {\hirayo (\Gamma.A)} && {\dot{U}} \\
	\\
	& {\hirayo \Gamma} && U
	\arrow["p"{description}, from=2-4, to=4-4]
	\arrow["A"{description}, from=4-2, to=4-4]
	\arrow["{\hirayo(\delta_A)}"{description}, dashed, from=2-2, to=4-2]
	\arrow["{q_A}"{description}, dashed, from=2-2, to=2-4]
	\arrow["a"{description}, curve={height=-12pt}, from=1-1, to=2-4]
	\arrow["id"{description}, curve={height=18pt}, from=1-1, to=4-2]
	\arrow[dotted, from=1-1, to=2-2]
\end{tikzcd}\]
\noindent Naturality follows as for $\epsilon$.
\end{itemize}
Triangle identities of $(\epsilon,\eta)$ lie above commutative diagrams, for $\Sigma_p$ and $\Delta_p$ respectively
$$ id_{\Gamma}=\delta_A \circ \gamma_a \quad\text{and}\quad id_{\Gamma.A}= \delta_{A\delta_A} \circ \gamma_{q_A},$$
therefore they are satisfied again by uniqueness of the cartesian lifting.
\item[$(2) \Rightarrow (1)$] The diagram below describes the representability data.
\[\begin{tikzcd}
	{\hirayo (u\Sigma_p\Delta_p A)} && {\dot{U}} \\
	\\
	{\hirayo \Gamma} && U
	\arrow["p"{description}, from=1-3, to=3-3]
	\arrow["A"{description}, from=3-1, to=3-3]
	\arrow["{\hirayo(u(\epsilon_A))}"{description}, dashed, from=1-1, to=3-1]
	\arrow["{\Delta_p A}"{description}, dashed, from=1-1, to=1-3]
\end{tikzcd}\]

\noindent It is a pullback by the universal property of $\epsilon$: for each pair $(\sigma,b)$ such that $A\circ\sigma = p\circ b$, there is a map
$$ s: p \circ b = \Sigma_p(b) \to A$$
induced by precomposition with $\sigma$. Therefore there must be a unique $\phi$ such that $\epsilon\circ\Sigma_p\phi=s$. Now $u(\Sigma_p\phi)$ gives the desired map into $u\Sigma_p\Delta_p A$.
\end{itemize}
\end{proof}

\begin{proof}[Proof of \ref{awodeytous}]
There is a clear correspondence between couples $(\ctx, p: \dot{U} \to U)$ and triangles as below, where $\du,u$ are discrete fibrations. The correspondence is given by the Grothendieck construction.

 \begin{minipage}{0.45\textwidth}
$$(\ctx, p: \dot{U} \to U)$$
 \end{minipage}
  \begin{minipage}{0.60\textwidth}
\[\begin{tikzcd}
	{\dot{\mathbb{U}}} && {\mathbb{U}} \\
	& {\ctx}
	\arrow["{\dot{u}}"', from=1-1, to=2-2]
	\arrow["u", from=1-3, to=2-2]
	\arrow["\Sigma_p"{description}, from=1-1, to=1-3]
\end{tikzcd}\]
 \end{minipage}
The additional axioms required on both ends are equivalent because of \Cref{reprreloaded}.
\end{proof}

\subsection{Plain dtts vs comprehension categories}\label{jdttvcompcat}
Another categorical approach to dependent type theories which is historically very meaningful was given by Jacobs in \cite{jacobs1999categorical}. This is the theory of comprehension categories and it is inherently presented in the form of a pre-judgemental theory as below.
\[\begin{tikzcd}
	{\mathbb{U}} && {\ctx\due} \\
	\\
	& {\ctx}
	\arrow["{\disp}"{description}, from=1-1, to=1-3]
	\arrow["u"{description}, curve={height=12pt}, from=1-1, to=3-2]
	\arrow["{\cod}"{description}, curve={height=-12pt}, from=1-3, to=3-2]
\end{tikzcd}\]
Comprehension categories clearly realize some form of context extension, and that is given by display maps.

 \begin{constr}[From pDTTs to comprehension categories] \label{tojacobs}
Each plain dependent type theory produces a comprehension category as described by the steps below.
\[\begin{tikzcd}
	{\mathbb{U}} & {\dot{\mathbb{U}}} & {\mathbb{U}} & {\ctx} & {\mathbb{U}} && {\dot{\mathbb{U}}} \\
	&&&&& {\ctx} \\
	{\mathbb{U}} && {\ctx} && {\mathbb{U}} && {\ctx\due}
	\arrow["\Delta", from=1-1, to=1-2]
	\arrow["\Sigma", from=1-2, to=1-3]
	\arrow["u", from=1-3, to=1-4]
	\arrow[""{name=0, anchor=center, inner sep=0}, "id"', curve={height=18pt}, from=1-1, to=1-3]
	\arrow["{\dot{u}}"{description}, curve={height=-18pt}, from=1-2, to=1-4]
	\arrow[""{name=1, anchor=center, inner sep=0}, "{\dot{u}\Delta}"{description}, shift left=3, from=3-1, to=3-3]
	\arrow[""{name=2, anchor=center, inner sep=0}, "u"{description}, shift right=3, from=3-1, to=3-3]
	\arrow["\Delta", from=1-5, to=1-7]
	\arrow[""{name=3, anchor=center, inner sep=0}, "u"', from=1-5, to=2-6]
	\arrow["{\dot{u}}", from=1-7, to=2-6]
	\arrow["{\disp}", from=3-5, to=3-7]
	\arrow["\epsilon"'{pos=0.1}, shorten >=1pt, Rightarrow, from=1-2, to=0]
	\arrow["\delta"', shorten <=2pt, shorten >=2pt, Rightarrow, from=1, to=2]
	\arrow["\delta"{description}, shorten <=8pt, shorten >=8pt, Rightarrow, from=1-7, to=3]
\end{tikzcd}\]
\vspace{.1cm}

It is enough to follow the picture from left to right (and top to bottom) to see how a plain dependent type theory in our sense produces a \textit{display} functor, which thus specifies a comprehension category.
 \end{constr}

Of course it is a legitimate question to ask whether every comprehension category can be realized via a plain dependent type theory. Turns out that the two are in fact equivalent, and to prove such a thing is the starting point of \cite{biequivCoEm}.

\subsection{Dictionary} \label{dictionnaire}

Dependent type theory has a well established notation, which we switch to in this subsection. The table below declares the dictionary between our framework and the classical notation.

Following the presentation in \Cref{prolegomena}, it will need to take into account Syntax (but there is not much to say there), Judgements, and Rules. What we adopt here is a one-to-one rewriting of (some) components introduced in \Cref{calculi} in order to make the calculations we will see more transparent. Still, each string of symbols will simply represent its categorical backbone.

\subsubsection{Dictionary for judgements} \label{dicjud}
As we mentioned in \Cref{jdtt}, we think of $\uu$ as classifing types, $\duu$ as classifing terms, and of $\Sigma$ as performing the typing. We make this clear with the choices in the translation that follow. (Sometimes we might omit the word ${\tt Type}$ for brevity).
\begin{center}
\begin{tabular}{ c | c }
\hline
 $\Gamma \vdash A \; \mathbb{U}$ & $\Gamma \vdash A\; {\tt Type} $ \\
  $\Gamma \vdash a \; \dot{\mathbb{U}} \quad (\Gamma \vdash A \; \mathbb{U} \quad \Gamma \vdash \Sigma a =_{\mathbb{U}} A)$ & $\Gamma \vdash a : A$\\
   $\Gamma \vdash A=_{\uu}B \quad (\Gamma \vdash A \; \mathbb{U} \quad \Gamma \vdash B \; \mathbb{U})$ & $\Gamma \vdash A=B\; {\tt Type} $ \\
   $\Gamma \vdash a=_{\duu}b \quad (\Gamma \vdash A \; \mathbb{U} \quad \Gamma \vdash \Sigma a =_{\mathbb{U}} A \quad \Gamma \vdash \Sigma b =_{\mathbb{U}} A )$ & $\Gamma \vdash a=b:A $ \\
\hline
\end{tabular}
\end{center}

\begin{rem}[How many types to a term?]
One might see our choice in the treatment of typing as profoundly Church-like, in the sense that to one term we only assign one type via the functor $\Sigma$, and that is far from the practice. The generality of our definition, though, allows for some tweaks, so that if one wishes to have the possibility of assigning different types to the same term (say both $0:{\tt N}$ and $0:{\tt Z}$) one can simply choose $\duu$ as a subcategory of two categories with, respectively, names for terms and for types (hence code the two above as $(0,{\tt N})$ and $(0,{\tt Z})$), and make $\Sigma$ act as a second projection.
\end{rem}

\subsubsection{Dictionary for rules} \label{dicrules}
We also have a dictionary for rules, which we have (at least) two of. The first is implicitly used in \Cref{dicjud}, and it is the typing.

\begin{minipage}{0.35\textwidth}
\[\begin{tikzcd}
	\duu && \uu \\
	& \ctx
	\arrow["\Sigma", from=1-1, to=1-3]
	\arrow["{\dot{u}}"', from=1-1, to=2-2]
	\arrow["u", from=1-3, to=2-2]
\end{tikzcd}\]
\end{minipage}
\begin{minipage}{0.3\textwidth}
\begin{prooftree}\AxiomC{$\Gamma  \vdash a\; \duu$}\LeftLabel{$(\Sigma)$}\UnaryInfC{$\Gamma \vdash \Sigma a \; \uu $}\end{prooftree}
\end{minipage}
\begin{minipage}{0.25\textwidth}
\begin{prooftree}\AxiomC{$\Gamma \vdash a: \Sigma a$}\LeftLabel{$(\Sigma)$}\UnaryInfC{$\Gamma \vdash \Sigma a \;{\tt Type}$}\end{prooftree}
\end{minipage}\hfill

\noindent The second is the policy $\delta$ from \Cref{tojacobs}, which we here denote as follows.
\begin{minipage}{0.35\textwidth}
\[\begin{tikzcd}
	{\mathbb{U}} && {\dot{\mathbb{U}}} \\
	& {\ctx}
	\arrow["\Delta", from=1-1, to=1-3]
	\arrow[""{name=0, anchor=center, inner sep=0}, "u"', from=1-1, to=2-2]
	\arrow["{\dot{u}}", from=1-3, to=2-2]
	\arrow["\delta"{description}, shorten <=8pt, shorten >=8pt, Rightarrow, from=1-3, to=0]
\end{tikzcd}\]
\end{minipage}
\begin{minipage}{0.3\textwidth}
\begin{prooftree}\AxiomC{$\Gamma\; \vdash A\; \mathbb{U}$}\LeftLabel{$(\delta)$}\UnaryInfC{$\dot{u}\Delta A \vdash  \Delta A\; \dot{\mathbb{U}}$}\end{prooftree}
\end{minipage}
\begin{minipage}{0.3\textwidth}
\begin{prooftree}\AxiomC{$\Gamma\; \vdash A\; {\tt Type}$}\LeftLabel{$(\delta)$}\UnaryInfC{$\Gamma.A \vdash  q_A:  \Sigma\Delta A$}\end{prooftree}
\end{minipage}\hfill
\vspace{.08cm}
\noindent Again, such writings are only stand-ins for their categorical counterparts.

\subsection{Context extension and type dependecy} \label{cetd}

In this subsection we compute some rules that are automatically deduced by the finite-limit closure of a plain dependent type theory. As we will see, they correspond to some very well known rules in dependent type theories.

\subsubsection{Context extension in a DTT, explicitly}\label{explicitly}

\begin{notat}
For readibility reasons, and in order to highlight the correspondence between the logic and the categories without trivializing it, we denote $A\sigma$ the result of the cartesian lifting of $A$ along $\sigma$ and $A[\sigma]$ the substitution in the sense of the type theory.
\end{notat}

All of the pieces appearing in the dictionary \ref{dicrules} surely do look familiar to the type-theorist reader, all but one, and that is $\Sigma\Delta A$. In fact one might rightfully ask how to compute such an object.
\begin{prop}[On a formal emergence of substitution] \label{computingsub}
Let $A$ be on object in $\mathbb{U}$. Then, $\Sigma\Delta A = A \delta_{A}$, in the sense of \Cref{syntax_policies}.
\end{prop}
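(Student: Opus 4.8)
The plan is to unwind both sides of the claimed equality down to objects of the fibre $u^{-1}(\dot u\Delta A)$ and then recognise them as the same cartesian lift. Write $\Gamma = uA$, so that $A \in u^{-1}(\Gamma)$. First I would recall, from \Cref{tojacobs} and the dictionary in \Cref{dicrules}, that $\delta_A$ is by definition the underlying substitution $u\epsilon_A$ of the counit component $\epsilon_A \colon \Sigma\Delta A \to A$ of the adjunction $\Sigma \dashv \Delta$; since the $2$-cell witnessing $u\Sigma = \dot u$ belongs to $\mathcal{P}$, we have $u\Sigma\Delta A = \dot u\Delta A$ (the object written $\Gamma.A$ in the dictionary), so that $\delta_A \colon \dot u\Delta A \to \Gamma$ is a legitimate object of $\mathsf{ctx}_{/\Gamma}$. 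On the other side, by the explicit description of the bijection $\chi$ in \Cref{Yoneda}, $A\delta_A$ is \emph{defined} to be the cartesian lift of the substitution $\delta_A$ at the object $A$; in particular it is an object of $u^{-1}(\dot u\Delta A)$.

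The heart of the argument is then a single observation: the counit component $\epsilon_A \colon \Sigma\Delta A \to A$ is a $u$-cartesian morphism. This is precisely the requirement, imposed in \Cref{jdtt}, that $(\epsilon, \eta)$ be cartesian natural transformations — each component of $\epsilon$ is cartesian for the fibration $u$. Since moreover $\epsilon_A$ lies over $\delta_A = u\epsilon_A$ and has codomain $A$, the pair $(\Sigma\Delta A, \epsilon_A)$ satisfies the defining universal property of the cartesian lift of $\delta_A$ at $A$. Invoking uniqueness of cartesian lifts — up to a unique vertical isomorphism, and strictly once a cleavage is fixed, as in the conventions underlying \Cref{Yoneda} — I would conclude $\Sigma\Delta A = A\delta_A$ as objects over $\dot u\Delta A$, the ``$=$'' being read up to the canonical iso in the spirit of \Cref{quelliso}.

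The only point requiring real care, and the place where one could slip, is the bookkeeping of domains and codomains: verifying that $u\Sigma\Delta A$ and $\dot u\Delta A$ genuinely coincide so that $\epsilon_A$ does live over the substitution called $\delta_A$, and keeping track of the chosen cleavage so that the identification is an equality rather than merely an isomorphism. Beyond matching these conventions there is no essential difficulty; the proposition is, conceptually, just the statement that ``the weakening of $A$ along its own display map is computed by the counit of context extension,'' which is immediate once context extension is identified with $\Sigma \dashv \Delta$.
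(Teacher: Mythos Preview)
Your argument is correct and is essentially the same as the paper's: both reduce the claim to the observation that $\epsilon_A$ lies over $\delta_A = u\epsilon_A$ and is a cartesian morphism for $u$ by the hypothesis on $(\epsilon,\eta)$ in \Cref{jdtt}, hence realises the cartesian lift defining $A\delta_A$. The paper's proof is terser but makes the identical moves; your extra bookkeeping on domains and cleavages is sound and adds clarity without diverging from the approach.
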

\begin{proof}
 We know that there is an arrow $\epsilon_A: \Sigma\Delta A \to A$. By the discussion in \Cref{syntax_policies}, the thesis is equivalent to the fact that the cartesian lifting of $\delta_{A}$ along $u$ is precisely $\epsilon_A$. Recall, that $\delta_A$ is by definition $u (\epsilon_A)$, therefore it is a lifting. It is cartesian by assumption.
\end{proof}
Notice that this is as trivial as (and in fact it amounts to) proving that the process of computing weakening \emph{can} be simulated in the syntax using substitution, provided that suitable substitution rules do in fact exist. We can re-read the rule hidden in the policy $\delta$ as follows.

\begin{minipage}{0.49\textwidth}
    \begin{prooftree}\AxiomC{$\Gamma \vdash A \;\mathbb{U} $}\LeftLabel{($\delta$)}\UnaryInfC{$\dot{u}\Delta A \vdash\; \Delta A \dot{\mathbb{U}}$}\end{prooftree}
\end{minipage}
\begin{minipage}{0.49\textwidth}
    \begin{prooftree}\AxiomC{$\Gamma \vdash A\; {\tt Type}$}\LeftLabel{($\delta$)}\UnaryInfC{$\Gamma.A \vdash q_A : A\delta_A$}\end{prooftree}
\end{minipage}
\vspace{.08cm}

\noindent Finally, we observe that the deductive rule on the right is a version of {\it context extension} in dependent type theory.

\subsubsection{Type dependency in a DTT, explicitly}\label{typedep}
Similarly to the case of context extension, in a pDTT as in \Cref{jdtt} the most classical instances of type depencency emerge too. Let us produce the following two rules. 

\begin{minipage}{0.49\textwidth}
    \begin{prooftree}\AxiomC{$\Gamma\vdash a:A$}\AxiomC{$\Gamma.A\vdash B\; {\tt Type}$}\LeftLabel{(DTy)}\BinaryInfC{$\Gamma\vdash B \la a \ra \; {\tt Type}$}\end{prooftree}
\end{minipage}
\begin{minipage}{0.49\textwidth}
      \begin{prooftree}\AxiomC{$\Gamma\vdash a:A$}\AxiomC{$\Gamma.A\vdash b:B$}\LeftLabel{(DTm)}\BinaryInfC{$\Gamma\vdash b \la a \ra : B\la a \ra$}\end{prooftree}
\end{minipage}\hfill
\vspace{.03cm}

\noindent In order to do so, we first need (nested) classifiers for the premises. More generally, with an iterated construction we will code composed judgements of the form below.
\begin{align*}
\Gamma\vdash a:A, \Gamma.A\vdash b:B & \quad\quad\quad \Gamma\vdash A,\Gamma.A\vdash b:B \\
\Gamma\vdash a:A, \Gamma.A\vdash B & \quad\quad\quad \Gamma\vdash A, \Gamma.A\vdash B
\end{align*}
This is achieved as follows.
\[\begin{tikzcd}
	{\duu.\Sigma\Delta\duu} & {\mathbb{U}.\Delta\dot{\mathbb{U}}} && {\dot{\mathbb{U}}} \\
	{\duu.\Sigma\Delta\uu} & {\mathbb{U}.\Delta{\mathbb{U}}} && {\mathbb{U}} & {} \\
	\duu & {\mathbb{U}} & {\dot{\mathbb{U}}} & {\ctx}
	\arrow[from=1-2, to=1-4]
	\arrow["\Delta"', from=3-2, to=3-3]
	\arrow["{\dot{u}}"', from=3-3, to=3-4]
	\arrow["u", from=2-4, to=3-4]
	\arrow[""{name=0, anchor=center, inner sep=0}, from=2-2, to=2-4]
	\arrow[from=2-2, to=3-2]
	\arrow["\Sigma", from=1-4, to=2-4]
	\arrow[from=1-2, to=2-2]
	\arrow[""{name=1, anchor=center, inner sep=0}, "\Sigma"', from=3-1, to=3-2]
	\arrow[from=1-1, to=2-1]
	\arrow[from=2-1, to=3-1]
	\arrow[""{name=2, anchor=center, inner sep=0}, from=2-1, to=2-2]
	\arrow[from=1-1, to=1-2]
	\arrow["\ulcorner"{anchor=center, pos=0.125}, draw=none, from=2-2, to=3-3]
	\arrow["\ulcorner"{anchor=center, pos=0.125}, draw=none, from=2-1, to=1]
	\arrow["\ulcorner"{anchor=center, pos=0.125}, draw=none, from=1-1, to=2]
	\arrow["\ulcorner"{anchor=center, pos=0.125}, draw=none, from=1-2, to=0]
\end{tikzcd}\]

\noindent For example, the fibration on $\uu.\Delta\uu$ classifies pairs $(A,B)$ of types such that $u\Sigma\Delta(A)=u(B)$. This is precisely the composed judgement $\Gamma\vdash A,\Gamma.A\vdash B$.

\begin{lem}[Focus on $\mathbb{U}.\Delta\mathbb{U}$] \label{ciccino}
In a plain dtt we have the following rules and policy.
\[\begin{tikzcd}[ampersand replacement=\&]
	{\mathbb{U}.\Delta{\mathbb{U}}} \& {\mathbb{U}} \&\& {\ctx} \\
	{(\Gamma\vdash A,\Gamma.A\vdash B)} \& {(\Gamma\vdash A)} \&\& {(\Gamma.A \to\Gamma)}
	\arrow["{u.\dot{u}\Delta}", from=1-1, to=1-2]
	\arrow[""{name=0, anchor=center, inner sep=0}, "u"{description}, curve={height=12pt}, from=1-2, to=1-4]
	\arrow[""{name=1, anchor=center, inner sep=0}, "{\dot{u} \circ\Delta}"{description}, curve={height=-12pt}, from=1-2, to=1-4]
	\arrow[maps to, from=2-1, to=2-2]
	\arrow[maps to, from=2-2, to=2-4]
	\arrow["\delta", shorten <=3pt, shorten >=3pt, Rightarrow, from=1, to=0]
\end{tikzcd}\]
\end{lem}
\begin{proof}
This is the first  detailed instance of two judgement classifiers supported by the same category, since one could perform the two following compositions
\[\begin{tikzcd}[ampersand replacement=\&]
	{\mathbb{U}.\Delta{\mathbb{U}}} \&\& {\mathbb{U}} \\
	{\mathbb{U}} \&\& {\ctx} \\
	\ctx
	\arrow["u", from=1-3, to=2-3]
	\arrow[from=1-1, to=1-3]
	\arrow[from=1-1, to=2-1]
	\arrow[""{name=0, anchor=center, inner sep=0}, "\du\circ\Delta", from=2-1, to=2-3]
	\arrow["\lrcorner"{anchor=center, pos=0.125}, draw=none, from=1-1, to=2-3]
	\arrow[""{name=1, anchor=center, inner sep=0}, "u"', from=2-1, to=3-1]
	\arrow["\Id", from=2-3, to=3-1]
	\arrow["\delta", shorten <=7pt, shorten >=7pt, Rightarrow, from=0, to=1]
\end{tikzcd}\]
which are related as discussed in \Cref{ctxextdavero}. Such a policy is the symptom of a shift in perspective: on the upper path, one travels along the pullback diagram above, therefore the context which one lands on is $\Gamma.A$; on the lower, one is concerned with the ``original'' context of $A$, therefore getting to $\Gamma$. They are related, as we have thoroughly discussed, by $\delta_A$. Notice that the lower path, being a composition of fibrations, is a fibration as well.
\end{proof}
Since the classifier in the lower part of the diagram in \ref{ciccino} will play an important role in a later discussion, we name it, \[v: \uu.\Delta\uu \to \ctx. \]

In \cite[Prop. 2.2]{awodey_2018} there is the construction of a presheaf $P(\uu)$, with $P$ a polynomial functor, classifying the same nested judgement as $\uu.\Delta\uu$. The polynomial is defined as follows
\[\begin{tikzcd}[ampersand replacement=\&]
	{P=P_p\colon\catof{Psh}(\ctx)} \& {\catof{Psh}(\ctx)_{/\dot{U}}} \& {\catof{Psh}(\ctx)_{/{U}}} \& {\catof{Psh}(\ctx)}
	\arrow["{\dot{U}^*}", from=1-1, to=1-2]
	\arrow["{\Pi_p}", from=1-2, to=1-3]
	\arrow["{\Sigma_U}", from=1-3, to=1-4]
\end{tikzcd}\]
meaning the pullback along the terminal presheaf morphism from $\dot{U}$, followed by the right adjoint to pullback along $p$, followed by composition with the terminal from $U$. We apologize for the ambiguous notation ($-^*$, $\Pi$, $\Sigma$), but we promise this will only be used in the current section. 

\begin{lem}[Classifiers à la Awodey] \label{parricidio}
One can show that the fibration $v: \uu.\Delta\uu \to \ctx$ is precisely the projection $\pi:\hirayo\!\downarrow\!P(U)\to\ctx$.
\end{lem}
\begin{proof}
We sketch the identity fiber-wise. At each $\Gamma$, $(\uu.\Delta\uu)_\Gamma$ is comprised of pairs $(A,B)$ with $A, B$ in $\uu$ such that $u(B)=u(\Sigma\Delta A)$ and $u(A)=\Gamma$. By \Cref{yoneda_extensive}, such $A$ and $B$ correspond to $A^*$ and $B^*$ fitting in the following diagram,
\[\begin{tikzcd}[ampersand replacement=\&]
	U \& {\yo\Gamma.A} \& {\dot{U}} \\
	\& \yo\Gamma \& U
	\arrow["p", from=1-3, to=2-3]
	\arrow["{A^*}"', from=2-2, to=2-3]
	\arrow[from=1-2, to=2-2]
	\arrow[from=1-2, to=1-3]
	\arrow["\lrcorner"{anchor=center, pos=0.125}, draw=none, from=1-2, to=2-3]
	\arrow["{B^*}"', from=1-2, to=1-1]
\end{tikzcd}\]
with the central square being a pullback by representability of $p$. Using a result from \cite{DYCKHOFF1987103}, \cite[Prop. 2.2]{awodey_2018} shows that such diagrams are in a 1-to-1 correspondence with maps of the form $\yo \Gamma\to P(U)$,
hence with elements of $P(U)(\Gamma)$.
\end{proof}

We have shown that there is a very tight connection between our classifier and Awodey's. 
We hope that, though almost tautological, this result can convince the reader about the advantages of our construction, as it makes it much easier to predict the correct pullback that constructs the desired classifier (this will be more and more evident in the following sections), while it might not be always easy to find suitable (polynomial) functors to classify complex judgements. 
Also, we can avoid the complex machinery of polynomial functors (and, in this case, the conflicting notation).

In order to provide the rules (DTm) and (DTy) we build a map out of $\duu.\Sigma\Delta\uu$ (and of $\duu.\Sigma\Delta\duu$), and all we have is $\Sigma,\Delta,\eta,\epsilon$, finite limits closure, composition, substitution, whiskering, and $\sharp$-lifting. A few tries lead us to the following choice.

\vspace{.03cm}
 \begin{minipage}{0.5\textwidth}

\[\begin{tikzcd}
	& \duu\times\duu \\
	{\duu.\Sigma\Delta\duu} &&& \duu \\
	& \duu\times\uu \\
	{\duu.\Sigma\Delta\uu} &&& \uu \\
	& \duu \\
	\duu &&& \ctx
	\arrow[""{name=0, anchor=center, inner sep=0}, "\pi"{description}, from=4-1, to=4-4]
	\arrow["{u^* id}"{description}, curve={height=-12pt}, from=4-1, to=3-2]
	\arrow["\pi"{description}, from=3-2, to=4-4]
	\arrow["id"{description}, curve={height=-6pt}, from=6-1, to=5-2]
	\arrow[""{name=1, anchor=center, inner sep=0}, "{\dot{u}\Delta\Sigma}"{description}, from=6-1, to=6-4]
	\arrow["{\dot u}"{description}, from=5-2, to=6-4]
	\arrow["u", from=4-4, to=6-4]
	\arrow[dashed, from=4-1, to=6-1]
	\arrow[dashed, from=3-2, to=5-2]
	\arrow["\Sigma", from=2-4, to=4-4]
	\arrow[dashed, from=2-1, to=4-1]
	\arrow[dashed, from=1-2, to=3-2]
	\arrow[""{name=2, anchor=center, inner sep=0}, "\pi"{description}, from=2-1, to=2-4]
	\arrow["\pi"{description}, from=1-2, to=2-4]
	\arrow["{\dot{u}^*id}"{description}, curve={height=-12pt}, from=2-1, to=1-2]
	\arrow["{u^*\eta'}"{pos=0.7}, shift right=1, shorten >=4pt, Rightarrow, from=3-2, to=0]
	\arrow["{\eta'}"{pos=0.6}, shorten >=3pt, Rightarrow, from=5-2, to=1]
	\arrow["{\dot{u}^*\eta'}"{pos=0.7}, shift right=1, shorten >=4pt, Rightarrow, from=1-2, to=2]
\end{tikzcd}\]

 \end{minipage}\hfill
 \begin{minipage}{0.45\textwidth}

    We call $\eta':\dot{u}\Rightarrow\dot{u}\Delta\Sigma$ the natural transformation induced by $\eta$ via \cref{whiskering} and apply $\sharp$-lifting (\Cref{lifting}) as on the left. Write $\pi$ for projections.
    
    When we compute each lifting, we see that the policy $(\dot{u}^*\eta')$ computes, starting from a pair $(a,b)$ some new term in context $\Gamma$, while the policy $(u^*\eta')$ matches to a pair $(a,B)$ a new type in context $\Gamma$.

    We give each a meaningful name, that is, extensively:
    $$ \dot{u}^* id (a,b) = (a, b \la a \ra),$$
    $$ u^* id (a,B) = (a, B \la a \ra).$$

    Notice that the typing is appropriate due to the action of the vertical $\Sigma$.
 \end{minipage}
 \vspace{.3cm}

We are now one step away from having (DTy) and (DTm), and in fact the distance between the policies $u^* id$, $\dot{u}^* id$ and the desired rules is extremely subtle, and one could argue - though the author might disagree - a merely technical one: on the premise of, say, dependent typing, we now have the following nested judgement (which we write in our original notation for judgemental theories, so that we can make the difference evident)
$$ \Gamma.A \vdash (a,B) \; \duu.\Sigma\Delta\uu(u \circ \dot{u}\Delta\Sigma.u)$$
while we wish to have the pair stand over $\Gamma$. That is achieved by $v$ (that from \Cref{ciccino}),
$$ \Gamma \vdash (a,B) \; \duu.\Sigma\Delta\uu(v) \, ,$$
therefore we need to adjust the two policies accordingly. We can do that by regular 2-categorical manipulations attaching $v$ (the composition of the colored arrows below) to the diagram above.

 \begin{minipage}{0.50\textwidth}
\begin{tikzcd}[scale cd = 0.75]
	& \duu\times\uu \\
	{\duu.\Sigma\Delta\uu} && {\uu.\Delta\uu} & \uu \\
	& \duu \\
	\duu &&& \ctx
	\arrow["{u^* id}"{description}, curve={height=-12pt}, from=2-1, to=1-2]
	\arrow[curve={height=-12pt}, from=1-2, to=2-4]
	\arrow["id"{description}, curve={height=-6pt}, from=4-1, to=3-2]
	\arrow["{\dot u}"{description}, curve={height=-12pt}, from=3-2, to=4-4]
	\arrow["u", from=2-4, to=4-4]
	\arrow[dashed, from=2-1, to=4-1]
	\arrow[dashed, from=1-2, to=3-2]
	\arrow[draw={rgb,255:red,214;green,92;blue,92}, from=2-1, to=2-3]
	\arrow[from=2-3, to=2-4]
	\arrow[shorten >=5pt, Rightarrow, from=1-2, to=2-3]
	\arrow[""{name=0, anchor=center, inner sep=0}, draw={rgb,255:red,214;green,92;blue,92}, from=2-3, to=4-4]
	\arrow[shorten <=2pt, shorten >=4pt, Rightarrow, from=2-4, to=0]
\end{tikzcd}
 \end{minipage}
  \begin{minipage}{0.50\textwidth}
\[\begin{tikzcd}[scale cd = 1.15]
	& \duu\times\uu \\
	{\duu.\Sigma\Delta\uu} & {\uu.\Delta\uu} & \ctx
	\arrow["{u^* id}"{description}, curve={height=-12pt}, from=2-1, to=1-2]
	\arrow[draw={rgb,255:red,214;green,92;blue,92}, from=2-1, to=2-2]
	\arrow[shorten >=2pt, Rightarrow, from=1-2, to=2-2]
	\arrow[draw={rgb,255:red,214;green,92;blue,92}, from=2-2, to=2-3]
	\arrow[curve={height=-12pt}, from=1-2, to=2-3]
\end{tikzcd}\]
\end{minipage}

The policy on the right now is (DTy). One could repeat a similar argument for terms, which again have the correct typing because of the action of $\Sigma$ in the $\sharp$-lifting above.

\begin{rem}[Similarities between DTy and proof theoretic Cut]
In the next section we highlight a remarkable connection between dependent typing and the cut rule from natural deduction: we redirect the reader to \Cref{dtycut} for more information.
\end{rem}

\subsubsection{Substitution along display maps}\label{ctxextdavero}
Of course there are (at least) two interesting natural transformations that we know of insisting on
\[\begin{tikzcd}
	\duu && \uu && \duu && \ctx
	\arrow["\Sigma", from=1-1, to=1-3]
	\arrow["\Delta", from=1-3, to=1-5]
	\arrow["{\dot u}", from=1-5, to=1-7]
\end{tikzcd}\]
that is $\eta$ and $\epsilon$. If $\eta$ is so interesting, one might wonder what repeating the process discussed in \Cref{typedep} with $\epsilon$ might bring. We have a hint about its outcome, and that is given by the $\delta$ from \Cref{tojacobs}, still we compute it precisely.

\begin{minipage}{0.45\textwidth}
\[\begin{tikzcd}
	& {\duu.\Sigma\Delta\duu} \\
	\duu\times\duu &&& \duu \\
	& {\duu.\Sigma\Delta\uu} \\
	\duu\times\uu &&& \uu \\
	& \duu \\
	\duu &&& \ctx
	\arrow[""{name=0, anchor=center, inner sep=0}, "\pi"{description}, from=4-1, to=4-4]
	\arrow["{u^* id}"{description}, curve={height=-12pt}, from=4-1, to=3-2]
	\arrow["\pi"{description}, from=3-2, to=4-4]
	\arrow["id"{description}, curve={height=-6pt}, from=6-1, to=5-2]
	\arrow[""{name=1, anchor=center, inner sep=0}, "{\dot{u}}"{description}, from=6-1, to=6-4]
	\arrow["u\Sigma\Delta\Sigma"{description}, from=5-2, to=6-4]
	\arrow["u", from=4-4, to=6-4]
	\arrow[dashed, from=4-1, to=6-1]
	\arrow[dashed, from=3-2, to=5-2]
	\arrow["\Sigma", from=2-4, to=4-4]
	\arrow[dashed, from=2-1, to=4-1]
	\arrow[dashed, from=1-2, to=3-2]
	\arrow[""{name=2, anchor=center, inner sep=0}, "\pi"{description}, from=2-1, to=2-4]
	\arrow["\pi"{description}, from=1-2, to=2-4]
	\arrow["{\dot{u}^*id}"{description}, curve={height=-12pt}, from=2-1, to=1-2]
	\arrow["{u^*\epsilon'}"{pos=0.7}, shift right=1, shorten >=4pt, Rightarrow, from=3-2, to=0]
	\arrow["{\epsilon'}"{pos=0.6}, shorten >=3pt, Rightarrow, from=5-2, to=1]
	\arrow["{\dot{u}^*\epsilon'}"{pos=0.7}, shift right=1, shorten >=4pt, Rightarrow, from=1-2, to=2]
\end{tikzcd}\]
 \end{minipage}\hfill
 \begin{minipage}{0.45\textwidth}
  We call $\epsilon':\dot{u}\Delta\Sigma\Rightarrow\dot{u}$. The construction detailed here, when explicitly computed, induces the two following rules involving $\delta_A:\Gamma.A\to \Gamma$,
    $$ \dot{u}^* id (a,a') = (a, a'\delta_A)$$
    $$ u^* id (a,A') = (a, A'\delta_A)$$
    meaning we can transport terms and types along arbitrary display maps, given that they insist on the same context.

    \begin{prooftree} \AxiomC{$\Gamma\vdash a:A$}\AxiomC{$\Gamma\vdash a':A'$}\BinaryInfC{$\Gamma\vdash a:A \qquad \Gamma.A \vdash a'\delta_A:A'\delta_A $}\end{prooftree}
 \end{minipage}

\begin{rem}[Rules for free]\label{triangid}
Since we now have rules involving the unit and rules involving the counit of an adjunction, we can exploit their relation to one another and show once again the computational power of judgemental theories. In particular, the (bases of the) constructions in \Cref{typedep} and \Cref{ctxextdavero} are related by the triangle identities:

\[\begin{tikzcd}
	\duu && \uu && \ctx && \duu \\
	\\
	\duu & \uu & \duu & \uu & \ctx && \ctx
	\arrow["\Sigma", from=3-1, to=3-2]
	\arrow["\Delta", from=3-2, to=3-3]
	\arrow["\Sigma", from=3-3, to=3-4]
	\arrow["u", from=3-4, to=3-5]
	\arrow[""{name=0, anchor=center, inner sep=0}, curve={height=-24pt}, from=3-1, to=3-3]
	\arrow[""{name=1, anchor=center, inner sep=0}, curve={height=24pt}, from=3-2, to=3-4]
	\arrow["\Sigma", from=1-1, to=1-3]
	\arrow["u", from=1-3, to=1-5]
	\arrow[""{name=2, anchor=center, inner sep=0}, from=1-7, to=3-7]
	\arrow[""{name=3, anchor=center, inner sep=0}, curve={height=-24pt}, from=1-7, to=3-7]
	\arrow[""{name=4, anchor=center, inner sep=0}, curve={height=24pt}, from=1-7, to=3-7]
	\arrow["\eta", shorten <=2pt, Rightarrow, from=0, to=3-2]
	\arrow["\epsilon"{pos=0.4}, shorten >=2pt, Rightarrow, from=3-3, to=1]
	\arrow["{\eta'}", shorten <=5pt, shorten >=5pt, Rightarrow, from=3, to=2]
	\arrow["{\epsilon'}", shorten <=5pt, shorten >=5pt, Rightarrow, from=2, to=4]
\end{tikzcd}\]
so that whiskering the two $\sharp$-liftings above to compute $\epsilon$ after $\eta$ yields the functor $\dot{u}$. Then at each level we have the same relation. Therefore $$ (A'\delta_A)\la a \ra = A'\quad\text{and}\quad (a'\delta_A)\la a \ra = a', $$
or, explicitly, we have the following rule

\begin{minipage}{0.35\textwidth}
\[\begin{tikzcd}
	\duu\times\duu & {\duu.\Sigma\Delta\duu} & \duu\times\duu
	\arrow["{\dot{u}^* id}", from=1-1, to=1-2]
	\arrow["{\dot{u}^* id}", from=1-2, to=1-3]
	\arrow["id"', curve={height=12pt}, from=1-1, to=1-3]
\end{tikzcd}\]
 \end{minipage}\hfill
 \begin{minipage}{0.61\textwidth}
 \begin{prooftree} \AxiomC{$\Gamma\vdash a:A$}\AxiomC{$\Gamma\vdash a':A'$}\BinaryInfC{$\Gamma\vdash a=a:A \qquad \Gamma \vdash (a'\delta_A)\la a \ra =a':A' $}\end{prooftree}
 \end{minipage}
which we did not know before. Such a rule is an instance of the discussion in \Cref{tonested}. Of course we cannot say the same for the opposite composition, but that is telling all in itself.
\end{rem}

\subsection{Dependent type theories with $\Pi$-types} \label{pity}

\begin{defn}[$\Pi$-types]\label{wpitypes}
A plain dependent type theory {\it with $\Pi$-types} is a pDTT as in \Cref{jdtt} having two additional rules $\Pi$, $\lambda$ such that the diagram below is commutative and the upper square is a pullback.
\[\begin{tikzcd}
	{\mathbb{U}.\Delta\dot{\mathbb{U}}} && {\dot{\mathbb{U}}} \\
	{\mathbb{U}.\Delta{\mathbb{U}}} && {\mathbb{U}} \\
	& {\ctx}
	\arrow["{\Sigma.(\dot{u}\Delta.u)}"', from=1-1, to=2-1]
	\arrow["\lambda"{description}, color={rgb,255:red,167;green,42;blue,42}, from=1-1, to=1-3]
	\arrow["\Pi"{description}, color={rgb,255:red,167;green,42;blue,42}, from=2-1, to=2-3]
	\arrow["\Sigma", from=1-3, to=2-3]
	\arrow["v"{description}, from=2-1, to=3-2]
	\arrow[from=2-3, to=3-2]
\end{tikzcd}\]
\end{defn}

Recall that $v$ is that from \Cref{ciccino}. The rest of this subsection is devoted to showing that the proof theory generated by such a judgemental theory actually meets our intuition for having $\Pi$-types.

\subsubsection{À la Martin-Löf}\label{mlpi}
Having $\Pi$-types in the sense of \cite{MARTINLOF197573} means to implement the following rules,
\vspace{.5em}

 \begin{minipage}{0.5\textwidth}
     \begin{prooftree}\AxiomC{$\Gamma \vdash A\; {\tt Type}$}\AxiomC{$\Gamma.A \vdash B\; {\tt Type}$}\LeftLabel{($\Pi$\textsc{F})}\BinaryInfC{$\Gamma \vdash \Pi_A B \; {\tt Type}$}\end{prooftree}
 \end{minipage}
 \begin{minipage}{0.5\textwidth}
      \begin{prooftree}\LeftLabel{($\Pi$\textsc{I})}\AxiomC{$\Gamma \vdash A\; {\tt Type}$}\AxiomC{$\Gamma.A \vdash b:B$}\BinaryInfC{$\Gamma \vdash \lambda_A b : \Pi_A B$}\end{prooftree}
 \end{minipage}

  \begin{minipage}{0.5\textwidth}
     \begin{prooftree}\AxiomC{$\Gamma \vdash f:\Pi_A B$}\AxiomC{$\Gamma \vdash a:A$}\LeftLabel{($\Pi$\textsc{E})}\BinaryInfC{$\Gamma \vdash f(a):B\la a\ra$}\end{prooftree}
 \end{minipage}
 \begin{minipage}{0.5\textwidth}
      \begin{prooftree}\AxiomC{$\Gamma.A \vdash b:B$}\AxiomC{$\Gamma \vdash a:A$}\LeftLabel{($\Pi$$\beta$)}\BinaryInfC{$\Gamma \vdash (\lambda_A b)(a)=b\la a\ra:B\la a \ra$}\end{prooftree}
 \end{minipage}
\vspace{.5em}

\noindent plus their congruence with definitional equality.
\begin{center}
 \begin{minipage}{0.5\textwidth}
     \begin{prooftree}\AxiomC{$\Gamma \vdash A=A'\; {\tt Type}$}\AxiomC{$\Gamma.A \vdash B=B'\; {\tt Type}$}\LeftLabel{($\Pi$\textsc{F}=)}\BinaryInfC{$\Gamma \vdash \Pi_A B = \Pi_{A'} B' \; {\tt Type}$}\end{prooftree}
 \end{minipage}

 \begin{minipage}{0.5\textwidth}
      \begin{prooftree}\LeftLabel{($\Pi$\textsc{I}=)}\AxiomC{$\Gamma \vdash A=A\; {\tt Type}$}\AxiomC{$\Gamma.A \vdash b=b':B$}\BinaryInfC{$\Gamma \vdash \lambda_A b = \lambda_A b' : \Pi_A B$}\end{prooftree}
 \end{minipage}

  \begin{minipage}{0.5\textwidth}
     \begin{prooftree}\AxiomC{$\Gamma \vdash f=f':\Pi_A B$}\AxiomC{$\Gamma \vdash a=a':A$}\LeftLabel{($\Pi$\textsc{E}=)}\BinaryInfC{$\Gamma \vdash f(a)=f'(a'):B\la a\ra$}\end{prooftree}
 \end{minipage}
\end{center}

\noindent The first two rules are almost evident in the very definition of dependent type theory with $\Pi$-types, while the other rules will be derived by the limit closure of the class of judgements and rules.

\begin{itemize}
    \item[($\Pi$\textsc{F})] Type formation is precisely the rule $(\Pi)$ in the sense of \Cref{rule} and \Cref{dicjud}, indeed $\mathbb{U}.\Delta \mathbb{U}$ classifies precisely the premises of ($\Pi$\textsc{F}).
    \item[($\Pi$\textsc{I})] Similarly, the introduction rule is precisely the rule $(\lambda)$ in the sense of \Cref{rule} and \Cref{dicrules}, where the commutativity of the diagram ensures the correct typing for the term.
\end{itemize}

In order to express the elimination rule, we first need to code its premise, that is the nested judgement
$$ \Gamma\vdash f:\Pi_A B\quad\;\Gamma\vdash a:A. $$
Notice that, because of ($\Pi$F), this is actually silent of two judgements, meaning it should read
$$ \Gamma\vdash A\quad\;\Gamma.A\vdash B\quad\;\Gamma\vdash f:\Pi_A B\quad\;\Gamma\vdash a:A, $$
instead, so that this is really the judgement we need to give a classification of. One can check that $\duu.\Sigma(\uu.\Delta\uu)$ classifies the first, second, and fourth judgement appearing above. Also, we know from \Cref{typedep} that $\duu.\Sigma(\uu.\Delta\uu)\cong\duu.\Sigma\Delta\uu$.
This is an instance of \Cref{quelliso}, and it just expresses the fact that, whenever we have a term $a:A$, we really have its type in our code already.
\[\begin{tikzcd}
	{\duu.\Sigma\Delta\duu} & {\mathbb{U}.\Delta\dot{\mathbb{U}}} && {\dot{\mathbb{U}}} \\
	{\duu.\Sigma\Delta\uu} & {\mathbb{U}.\Delta{\mathbb{U}}} && {\mathbb{U}} & {} \\
	\duu & {\mathbb{U}} & {\dot{\mathbb{U}}} & {\ctx}
	\arrow[from=1-2, to=1-4]
	\arrow["\Delta"', from=3-2, to=3-3]
	\arrow["{\dot{u}}"', from=3-3, to=3-4]
	\arrow["u", from=2-4, to=3-4]
	\arrow[""{name=0, anchor=center, inner sep=0}, from=2-2, to=2-4]
	\arrow[from=2-2, to=3-2]
	\arrow["\Sigma", from=1-4, to=2-4]
	\arrow[from=1-2, to=2-2]
	\arrow[""{name=1, anchor=center, inner sep=0}, "\Sigma"', from=3-1, to=3-2]
	\arrow[from=1-1, to=2-1]
	\arrow[from=2-1, to=3-1]
	\arrow[""{name=2, anchor=center, inner sep=0}, from=2-1, to=2-2]
	\arrow[from=1-1, to=1-2]
	\arrow["\ulcorner"{anchor=center, pos=0.125}, draw=none, from=2-2, to=3-3]
	\arrow["\ulcorner"{anchor=center, pos=0.125}, draw=none, from=2-1, to=1]
	\arrow["\ulcorner"{anchor=center, pos=0.125}, draw=none, from=1-1, to=2]
	\arrow["\ulcorner"{anchor=center, pos=0.125}, draw=none, from=1-2, to=0]
\end{tikzcd}\]
To now introduce the term $f$, we need to perform one more pullback. We attach the diagram above to that in \Cref{wpitypes}. We are entitled to do so because, by hypothesis, the square that $\Pi$ and $\lambda$ fit in has the correct map on its left. For brevity, and since it should not cause much trouble, for the remainder of the proof we call all ``horizontal'' projections $\pi$, and all ``vertical'' ones $\overline{\Sigma}$.
\[\begin{tikzcd}
	{\duu.\Sigma\Delta\duu} && {\mathbb{U}.\Delta\dot{\mathbb{U}}} && {\dot{\mathbb{U}}} \\
	{\duu.\Sigma\Delta\uu} && {\mathbb{U}.\Delta{\mathbb{U}}} && {\mathbb{U}}
	\arrow["{\overline{\Sigma}}", from=1-3, to=2-3]
	\arrow["\lambda"{description}, from=1-3, to=1-5]
	\arrow["\Pi"{description}, from=2-3, to=2-5]
	\arrow["\Sigma", from=1-5, to=2-5]
	\arrow["\ulcorner"{anchor=center, pos=0.125}, draw=none, from=1-3, to=2-5]
	\arrow[""{name=0, anchor=center, inner sep=0}, "\pi"{description}, from=2-1, to=2-3]
	\arrow["\pi"{description}, from=1-1, to=1-3]
	\arrow["{\overline{\Sigma}}", from=1-1, to=2-1]
	\arrow["\ulcorner"{anchor=center, pos=0.125}, draw=none, from=1-1, to=0]
\end{tikzcd}\]
To express the classifier for the whole premise, then, is to compute the pullback against $\Sigma$ of the composition of $\Pi$ and $\pi$ in the lower part of the diagram. Call $\Pi'=\Pi\circ\pi$. The premise of (E) is then classified by $(\duu.\Sigma\Delta\uu)\Sigma.\Pi'\duu$. We can see how it all builds up in the following suggestive writing
$$ (\duu.\Sigma\Delta\uu)\Sigma.\Pi'\duu \qquad (a \, .\, (A\,.\,B))\, .\, f$$
which is fibered over $\Gamma$: though not all of its components are types or terms specifically in context $\Gamma$, every judgement appearing in this nested one is built out of a construction performed entirely in $\Gamma$.

From now on, we will write all $n$-uples as above as traditional $n$-uples, since all pullbacks are subcategories of a product after all.
\[\begin{tikzcd}
	{(\duu.\Sigma\Delta\uu)\Sigma.\Pi'\duu} \\
	& {\duu.\Sigma\Delta\duu} && {\mathbb{U}.\Delta\dot{\mathbb{U}}} && {\dot{\mathbb{U}}} \\
	& {\duu.\Sigma\Delta\uu} && {\mathbb{U}.\Delta{\mathbb{U}}} && {\mathbb{U}}
	\arrow["{\overline{\Sigma}}", from=2-4, to=3-4]
	\arrow["\lambda"{description}, from=2-4, to=2-6]
	\arrow["\Pi"{description}, color={rgb,255:red,214;green,92;blue,92}, from=3-4, to=3-6]
	\arrow["\Sigma", color={rgb,255:red,214;green,92;blue,92}, from=2-6, to=3-6]
	\arrow["\ulcorner"{anchor=center, pos=0.125}, draw=none, from=2-4, to=3-6]
	\arrow[""{name=0, anchor=center, inner sep=0}, "\pi"{description}, color={rgb,255:red,214;green,92;blue,92}, from=3-2, to=3-4]
	\arrow["\pi"{description}, from=2-2, to=2-4]
	\arrow["{\overline{\Sigma}}", from=2-2, to=3-2]
	\arrow["{\overline{\Sigma}}"', curve={height=6pt}, dashed, from=1-1, to=3-2]
	\arrow["\pi"{description}, curve={height=-6pt}, dashed, from=1-1, to=2-6]
	\arrow["\ulcorner"{anchor=center, pos=0.125}, draw=none, from=2-2, to=0]
\end{tikzcd}\]
We now have two pullbacks insisting on the same cospan, then necessarily it is
\begin{equation}\label{pipb}
    (\duu.\Sigma\Delta\uu)\Sigma.\Pi'\duu\cong \duu.\Sigma\Delta\duu.
\end{equation}
This in {\it not} an instance of \Cref{quelliso}, though, and the isomorphism above actually turns out to contain all the information needed to provide rules (E) and ($\beta$), and then some.

Clearly there is always a map going from right to left, just consider:
$$ (a,b)\mapsto (A,B,\lambda_A b,a), $$
but \cref{pipb} is adding three more pieces of information, meaning
\begin{enumerate}[label=(\roman*)]
    \item there is also a map going from left to right, (though we can always {\it expand} information, only this tells us we can {\it compact} it);
    \item starting from the left, going right, and back left again, yields the identity;
    \item starting from the right, going left, and back right again, yields the identity.
\end{enumerate}
Of these, (i) will induce elimination and (iii) $\beta$-computation. The additional piece in (ii) will tell us something about what is generally called the $\eta$-rule, which is much more controversial. We will discuss it in detail in \Cref{damnueta}.

Call $\zeta$ and $\theta$ the inverse maps. A little calculation shows that they act as follows:
 $$\theta:(a,b)\mapsto (A,B,\lambda_A b,a),\quad \zeta: (A,B,f,a)\mapsto (a,f_B),$$
where we write $f_B$ for the term of type $B$ in the second component of $\zeta$. Broadly speaking, $\theta$ computes introduction (this is evident by $\lambda\circ\pi=\pi\circ\theta$) and $\zeta$ elimination (both because of its typing and because {\it we say so}).

Before we can provide an explicit representation for the missing rules, we shall be able to account for writings $b\la a\ra$ and $B\la a\ra$. In order to do that, we need to use the diagram in \Cref{typedep}. We paste it to the previous one as follows, calling
$$\gamma = \pi \circ u^* id \quad\text{and}\quad \dot{\gamma} = \pi \circ \dot{u}^* id.$$
Notice that the map $\overline{\Sigma}:\duu.\Sigma\Delta\duu\to\duu.\Sigma\Delta\uu$ is precisely that appearing in \Cref{typedep} and \Cref{ctxextdavero}, so that both ``rectangles'' insist on the same functor. All solid squares are pullbacks, the dashed one is only commutative. 
\[\begin{tikzcd}
	{(\duu.\Sigma\Delta\uu)\Sigma.\Pi'\duu} \\
	& {\duu.\Sigma\Delta\duu} && {\mathbb{U}.\Delta\dot{\mathbb{U}}} && {\dot{\mathbb{U}}} \\
	&&&& \duu \\
	& {\duu.\Sigma\Delta\uu} && {\mathbb{U}.\Delta{\mathbb{U}}} && {\mathbb{U}} \\
	&&&& \uu
	\arrow["{\overline{\Sigma}}"{pos=0.7}, from=2-4, to=4-4]
	\arrow["\lambda"{description, pos=0.7}, from=2-4, to=2-6]
	\arrow["\Pi"{description, pos=0.7}, from=4-4, to=4-6]
	\arrow["\Sigma"{pos=0.7}, from=2-6, to=4-6]
	\arrow["\pi"{description}, from=4-2, to=4-4]
	\arrow["\pi"{description}, from=2-2, to=2-4]
	\arrow["{\overline{\Sigma}}"{pos=0.7}, from=2-2, to=4-2]
	\arrow["\zeta"{description, pos=0.6}, shift right=1, color={rgb,255:red,214;green,92;blue,92}, from=1-1, to=2-2]
	\arrow["\theta"{description, pos=0.6}, shift right=1, from=2-2, to=1-1]
	\arrow["\pi"{description}, curve={height=-18pt}, from=1-1, to=2-6]
	\arrow["{\overline \Sigma}"{description}, curve={height=6pt}, from=1-1, to=4-2]
	\arrow["{\dot{\gamma}}"{description}, color={rgb,255:red,214;green,92;blue,92}, dashed, from=2-2, to=3-5]
	\arrow[dashed, from=3-5, to=5-5]
	\arrow["\gamma"{description}, dashed, from=4-2, to=5-5]
\end{tikzcd}\]

\begin{itemize}
    \item[($\Pi$\textsc{E})] The functor $\dot{\gamma}\zeta$ is the elimination rule, because to each quadruple it matches a term of the correct type. We call $\dot{\gamma}(a,f_B)=f_B \la a \ra=:f(a)$.
    \item[($\Pi\beta$)] Computation $\beta$ amounts to proving that if we apply introduction, followed by elimination, we kind of get to the point we started from. This is a rule with codomain as in \Cref{complexjce}, therefore we show that identity on $\duu.\Sigma\Delta\duu$ equalizes the following pair of arrows,
\[\begin{tikzcd}
	{\duu.\Sigma\Delta\duu} && {(\duu.\Sigma\Delta\uu)\Sigma.\Pi'\duu} && {\duu.\Sigma\Delta\uu} & \duu
	\arrow["id"{description}, shift right=2, curve={height=6pt}, from=1-1, to=1-5]
	\arrow["{\dot{\gamma}}"{description}, from=1-5, to=1-6]
	\arrow["\zeta", from=1-3, to=1-5]
	\arrow["\theta", from=1-1, to=1-3]
\end{tikzcd}\]
On the upper path is computed $(\lambda_A b )(a)$, on the lower we get $b\la a \ra$. The two paths equalize trivially. The desired rule is then
$$id: \duu.\Sigma\Delta\duu \to \duu.\Sigma\Delta\duu=\mathbb{E}(\dot{\gamma}\zeta\theta,\dot{\gamma}).$$
\end{itemize}

\subsubsection{Of congruence rules involving definitional equality.}\label{congruence}
In our dictionary in \ref{dictionnaire} we decided that definitional equality of types and terms should be interpreted as judgemental equality according to $u$ and $\du$, respectively, hence as identity of objects in the respective \quotmarks{universe} categories. This guarantees that rules ($\Pi$\textsc{F}=), ($\Pi$\textsc{I}=), ($\Pi$\textsc{E}=) are automatically verified. Rule ($\Pi$\textsc{I}=), also known as the \textit{$\xi$-rule}, in particular, is not verified by all models, especially those that are more computationally oriented, such as Kleene realizability or game semantics: we are indeed quite extensional in our spirit, but we believe this is more of a choice that we are making than a constraint of judgemental theories, and that it would be interesting to further develop the theory with different, weaker, but still finite-limit stable interpretations of judgemental equality.

\subsubsection{Of $\eta$ and elimination.}\label{damnueta}

The $\eta$-rule accounts for the need to determine what happens in the case that one wants to apply elimination followed by introduction, and at first it looks exactly as the dual of ($\Pi\beta$C). While it is clear that $\beta$ should prescribe equality of two terms, though, there is actually no agreement on the features $\eta$ should present, so that in the literature we find instances of the resulting computation of $\eta$ as being a {\it conversion} (\ie consisting of a definitional equality), interpreted as an {\it expansion}, or a {\it reduction} (meaning a non-symmetric relation whose reflexive, symmetric, and transitive closure defines the conversion). The virtue of each process, and each of its 2-categorical delivery, is the topic of \cite{seely1986modelling}.

In our framework, \cref{pipb} tells us something about which $\eta$-rule we should be looking at, and in fact we have
\vspace{-0.2cm}

\begin{prooftree}\AxiomC{$\Gamma \vdash f:\Pi_A B$}\LeftLabel{($\Pi\eta$)}\UnaryInfC{$\Gamma \vdash f=\lambda_A(f_B):\Pi_A B$}\end{prooftree}

\noindent which is precisely what $\theta\zeta=id$ says. This is only one of the possible expressions for $\eta$, and it differs from that presented in \cite[p.253]{awodey_2018}, which much more swiftly agrees with the tradition of categories with families. This is because, in a sense, we think the notion of elimination presented there, and in \Cref{mlpi} above, is not the correct one: it really is $\zeta$ performing the elimination, and it really is $f_B$ the term witnessing it. It is not in the computation through $\dot{\gamma}$ that a term of type $\Pi_A B$ turns into a term involving $A,B$. This argument, together with the possibility of excluding the $\eta$ rule entirely, will be made much more clear in \Cref{phity}.

\subsection{Dependent type theories with (extensional) Id-types} \label{idty}

For identity types we need to be able to consider pairs of terms of the same type, this is why we begin by pulling back $\Sigma$ against itself. Call $\pi_1$, $\pi_2$ the corresponding projections and $diag:\duu\to\duu\times\duu$ the unique map such that $\pi_1 \circ diag = id = \pi_2 \circ diag$.

\begin{defn}[Extensional $\mathsf{Id}$-types]\label{widtypes}
A plain dependent type theory {\it with extensional $\mathsf{Id}$-types} is a pDTT as in \Cref{jdtt} having two additional rules $\mathsf{Id}$, $\mathsf{i}$ such that the diagram below is commutative and the upper square is a pullback.
\[\begin{tikzcd}
	\duu && {\dot{\mathbb{U}}} \\
	\duu\times\duu && {\mathbb{U}} \\
	& {\ctx}
	\arrow["\Sigma", from=1-3, to=2-3]
	\arrow[from=2-3, to=3-2]
	\arrow["{\mathsf{Id}}"{description}, color={rgb,255:red,214;green,92;blue,92}, from=2-1, to=2-3]
	\arrow[from=2-1, to=3-2]
	\arrow["diag"', from=1-1, to=2-1]
	\arrow["{\mathsf{i}}"{description}, color={rgb,255:red,214;green,92;blue,92}, from=1-1, to=1-3]
\end{tikzcd}\]
\end{defn}

Again, the rest of the subsection is dedicated to showing that the proof theory generated by such judgemental theory actually meets our intuition for having $\mathsf{Id}$-types. Classically, having $\mathsf{Id}$-types means to implement the following rules

 \begin{minipage}{0.59\textwidth}
     \begin{prooftree}\AxiomC{$\Gamma \vdash A\;{\tt Type} $}\AxiomC{$\Gamma \vdash a:A$}\AxiomC{$\Gamma\vdash b:A$}\LeftLabel{($\mathsf{Id}$\textsc{F})}\TrinaryInfC{$\Gamma \vdash \mathsf{Id}_A(a,b) \;{\tt Type}$}\end{prooftree}
 \end{minipage}\hfill
 \begin{minipage}{0.39\textwidth}
      \begin{prooftree}\AxiomC{$\Gamma \vdash a:A$}\LeftLabel{($\mathsf{Id}$\textsc{I})}\UnaryInfC{$\Gamma \vdash \mathsf{i}(a):\mathsf{Id}_A(a,a)$}\end{prooftree}
 \end{minipage}

\hfill
 \begin{minipage}{0.4\textwidth}
     \begin{prooftree}\AxiomC{$\Gamma \vdash c:\mathsf{Id}_A(a,b)$}\LeftLabel{($\mathsf{Id}$\textsc{E})}\UnaryInfC{$\Gamma \vdash a=b:A$}\end{prooftree}
 \end{minipage}
 \begin{minipage}{0.4\textwidth}
      \begin{prooftree}\AxiomC{$\Gamma \vdash c:\mathsf{Id}_A(a,b)$}\LeftLabel{($\mathsf{Id}\eta$)}\UnaryInfC{$\Gamma \vdash c=\mathsf{i}(a):\mathsf{Id}_A(a,a)$}\end{prooftree}
 \end{minipage}
 \begin{minipage}{0.08\textwidth}
  \phantom{ci}
\end{minipage}
\vspace{.08cm}

\noindent As it was for \Cref{wpitypes}, the first two rules are evident in the very definition of dependent type theory with $\mathsf{Id}$-types.

\begin{itemize}
    \item[($\mathsf{Id}$\textsc{F})] Type formation is precisely the rule $(\mathsf{Id})$ in the sense of \Cref{rule} and \Cref{dicjud}. Clearly $\duu\times\duu$ classifies the premises of ($\mathsf{Id}$\textsc{F}).
    \item[($\mathsf{Id}$\textsc{I})] Similarly, the introduction rule is the rule $(\mathsf{i})$ in the sense of \Cref{rule} and \Cref{dicrules}, where the commutativity of the diagram forces the correct typing for the term.
\end{itemize}
For elimination and conversion we need to pin-point a classifier for judgements of the form
$$ \Gamma \vdash c: \mathsf{Id}_A (a,b),$$
but since the square is a pullback insisting on the cospan $(\mathsf{Id},\Sigma)$, such a feat is achieved by the (upper-left) $\duu$. Then not only do $\mathsf{Id},\mathsf{i}$ compute the appropriate term and type (below on the left), but they also act as projections (below on the right).
\[\begin{tikzcd}
	&& \duu \\
	\duu\times\duu && \duu && \duu
	\arrow["{\mathsf{i}}"{description}, from=2-3, to=2-5]
	\arrow["\pi"{description}, from=1-3, to=2-5]
	\arrow["\sim", from=1-3, to=2-3]
	\arrow["diag"{description}, from=2-3, to=2-1]
	\arrow["\pi"{description}, from=1-3, to=2-1]
	\arrow["\psi"', from=1-3, to=2-3]
\end{tikzcd}\]
\[\begin{tikzcd}
	a && {\mathsf{i}(a)} & {(a,b,c)} && c \\
	{(a,a)} && {\mathsf{Id}_A(a,a)} & {(a,b)} && {\mathsf{Id}_A(a,b)}
	\arrow["\Sigma"{description}, maps to, from=1-3, to=2-3]
	\arrow["{\mathsf{Id}}"{description}, maps to, from=2-1, to=2-3]
	\arrow["diag"{description}, color={rgb,255:red,214;green,92;blue,92}, maps to, from=1-1, to=2-1]
	\arrow["{\mathsf{i}}"{description}, color={rgb,255:red,214;green,92;blue,92}, maps to, from=1-1, to=1-3]
	\arrow["\pi"{description}, color={rgb,255:red,214;green,92;blue,92}, maps to, from=1-4, to=1-6]
	\arrow["\pi"{description}, color={rgb,255:red,214;green,92;blue,92}, maps to, from=1-4, to=2-4]
	\arrow["{\mathsf{Id}}"{description}, maps to, from=2-4, to=2-6]
	\arrow["\Sigma"{description}, maps to, from=1-6, to=2-6]
\end{tikzcd}\]
The object classifying judgements of the form $\Gamma \vdash a=b:A$, instead, is the equalizer $\mathbb{E}(\pi_1,\pi_2)$. By its universal property there must be a unique $\phi$ making the following diagram commute.
\[\begin{tikzcd}
	{\mathbb{E}(\pi_1,\pi_2)} & \duu && \duu \\
	& \duu\times\duu && \uu
	\arrow["\Sigma", from=1-4, to=2-4]
	\arrow["diag", from=1-2, to=2-2]
	\arrow["{\mathsf{i}}"{description}, from=1-2, to=1-4]
	\arrow["{\mathsf{Id}}"{description}, from=2-2, to=2-4]
	\arrow["\phi"', dashed, from=1-2, to=1-1]
	\arrow["e"', from=1-1, to=2-2]
\end{tikzcd}\]
\begin{itemize}
    \item[($\mathsf{Id}$\textsc{E})] The elimination rule, then, is
    $ \phi: \duu \to \mathbb{E}(\pi_1,\pi_2).$
    \item[($\mathsf{Id}\eta$)] The computation rule is computed as
    $$ \duu \to \mathbb{E}(\pi,\psi\mathsf{i}\pi_1 e \phi)=\mathbb{E}(\pi,\pi)=\duu$$
    therefore it is the map $id:\duu\to\duu$.
\end{itemize}

There would be a notion of $\beta$-computation (in the sense of introduction followed by elimination) here, too, but it is not usually written because it is trivial once one has definitional equality. In fact, it takes the following form.

\begin{minipage}{0.4\textwidth}
     \begin{prooftree}\AxiomC{$\Gamma \vdash a:A$}\LeftLabel{($\mathsf{Id}\beta1$)}\UnaryInfC{$\Gamma \vdash a=a:A$}\end{prooftree}
 \end{minipage}
 \begin{minipage}{0.5\textwidth}
      \begin{prooftree}\AxiomC{$\Gamma \vdash a:A$}\LeftLabel{($\mathsf{Id}\beta2$)}\UnaryInfC{$\Gamma \vdash \mathsf{i}(a)=\mathsf{i}(a):\mathsf{Id}_A(a,a)$}\end{prooftree}
 \end{minipage}\hfill
\vspace{.08cm}

\subsection{A categorical definition of extensional type constructor} \label{phity}

\begin{defn}[The extensional type constructor $\Phi$]\label{wphitypes}
A plain dependent type theory {\it with extensional $\Phi$-types} is a pDTT as in \Cref{jdtt} having two additional rules $\Phi$, $\Psi$ such that the diagram below is commutative and the upper square is a pullback.
\[\begin{tikzcd}
	{\mathbb{X}} && {\dot{\mathbb{U}}} \\
	{\mathbb{Y}} & {} & {\mathbb{U}} \\
	& {\ctx}
	\arrow["\Sigma", from=1-3, to=2-3]
	\arrow[from=2-3, to=3-2]
	\arrow["\Phi"{description}, color={rgb,255:red,214;green,92;blue,92}, from=2-1, to=2-3]
	\arrow[from=2-1, to=3-2]
	\arrow["\Lambda"', from=1-1, to=2-1]
	\arrow["\Psi"{description}, color={rgb,255:red,214;green,92;blue,92}, from=1-1, to=1-3]
\end{tikzcd}\]
\end{defn}

\begin{rem}
Notice that the definition is implicitly assuming that $\Lambda$ belongs to the closure of the generators under finite limits. Also, it is evident by the previous sections that $\Pi$-types and $\mathsf{Id}$-types fall under this definition.
\end{rem}

The rest of the subsection is devoted to showing that the proof theory generated by such judgemental theory actually meets our intuition for having extensional $\Phi$-types.

\vspace{.03cm}
 \begin{minipage}{0.5\textwidth}
     \begin{prooftree}\AxiomC{$\Gamma \vdash Y\; \mathbb{Y}$}\LeftLabel{($\Phi$\textsc{F})}\UnaryInfC{$\Gamma \vdash \Phi Y \; {\tt Type}$}\end{prooftree}
 \end{minipage}
 \begin{minipage}{0.5\textwidth}
     \begin{prooftree}\AxiomC{$\Gamma \vdash X\; \mathbb{X}$}\LeftLabel{($\Phi$\textsc{I})}\UnaryInfC{$\Gamma \vdash \Psi X : \Phi\Lambda X$}\end{prooftree}
 \end{minipage}
\vspace{.03cm}
\begin{itemize}
    \item[($\Phi$\textsc{F})] Type formation is precisely the rule $(\Phi)$ in the sense of \Cref{rule} and \Cref{dicjud}.
    \item[($\Phi$\textsc{I})] Similarly, the introduction rule is precisely the rule $(\Psi)$ in the sense of \Cref{rule} and \Cref{dicrules}, where the commutativity of the diagram forces the correct typing for the term.
\end{itemize}
Now, because we have requested that the square in \Cref{wphitypes} is a pullback, we automatically get the dashed functors below.
\[\begin{tikzcd}
	{\mathbb{Y}\Sigma.\Phi\dot{\mathbb{U}}} &&& {\mathbb{X}} \\
	& {\mathbb{X}} & {\dot{\mathbb{U}}} && {\mathbb{Y}\Sigma.\Phi\dot{\mathbb{U}}} & {\dot{\mathbb{U}}} \\
	& {\mathbb{Y}} & {\mathbb{U}} && {\mathbb{Y}} & {\mathbb{U}}
	\arrow["\Sigma", from=2-6, to=3-6]
	\arrow["\Phi"{description}, from=3-5, to=3-6]
	\arrow["\Lambda"', curve={height=18pt}, from=1-4, to=3-5]
	\arrow["\Psi"{description}, curve={height=-18pt}, from=1-4, to=2-6]
	\arrow[from=2-5, to=2-6]
	\arrow[from=2-5, to=3-5]
	\arrow["{\Lambda \star \Psi}"{description}, dashed, from=1-4, to=2-5]
	\arrow["\Lambda"', from=2-2, to=3-2]
	\arrow["\Psi"{description}, from=2-2, to=2-3]
	\arrow["\Sigma", from=2-3, to=3-3]
	\arrow["\Phi"{description}, from=3-2, to=3-3]
	\arrow[curve={height=-12pt}, from=1-1, to=2-3]
	\arrow[curve={height=12pt}, from=1-1, to=3-2]
	\arrow["{(-)\langle-\rangle}"{description}, dashed, from=1-1, to=2-2]
\end{tikzcd}\]

\begin{itemize}
    \item[($\Phi$\textsc{E})]  The rule associated to functor $(-)\langle-\rangle$ gives us the elimination rule on the right. Indeed the pullback category precisely classifies the premises of ($\Phi$\textsc{E}).
     \begin{prooftree}\AxiomC{$\Gamma \vdash a: \Phi Y$}\LeftLabel{($\Phi$\textsc{E})}\UnaryInfC{$\Gamma \vdash \Phi Y\langle a \rangle \; \mathbb{X}$}\end{prooftree}

\end{itemize}

\noindent  By essential uniqueness of pullbacks, the compositions $((-)\langle-\rangle) \circ (\Lambda \star \Psi)$ and $ (\Lambda \star \Psi) \circ( (-)\langle-\rangle)$ both amount to the identity of the respective object. This observation provided by universal property of the equalizer induces the arrows $\eta$ and $\beta$ in the diagram below.

\[\begin{tikzcd}
	{\mathbb{X}} &&&&&& {\mathbb{Y}\Sigma.\Phi\dot{\mathbb{U}}} \\
	{\mathbb{E}} && {\mathbb{X}} && {\dot{\mathbb{U}}} && {\mathbb{E}} & {\mathbb{Y}\Sigma.\Phi\dot{\mathbb{U}}} && {\dot{\mathbb{U}}}
	\arrow["\Psi", shift left=1, from=2-3, to=2-5]
	\arrow["{\Psi\circ((\Lambda-)\langle \Psi-\rangle)}"', shift right=1, from=2-3, to=2-5]
	\arrow[from=2-1, to=2-3]
	\arrow["id"{description}, from=1-1, to=2-3]
	\arrow["\beta"', dashed, from=1-1, to=2-1]
	\arrow["{\Phi.\Sigma}", shift left=1, from=2-8, to=2-10]
	\arrow["{\Phi.\Sigma\circ\Lambda.\Psi \circ((-)\langle -\rangle)}"', shift right=1, from=2-8, to=2-10]
	\arrow[from=2-7, to=2-8]
	\arrow["id"{description}, from=1-7, to=2-8]
	\arrow["\eta"', dashed, from=1-7, to=2-7]
\end{tikzcd}\]

\begin{itemize}

\item[($\Phi\beta$)]  The rule associated to the functor $\beta$ is our $\beta$-computation. Indeed, if we write down the rule explicitly we get the following.
     \begin{prooftree}\AxiomC{$\Gamma \vdash X \; \mathbb{X}$}\LeftLabel{($\Phi\beta$)}\UnaryInfC{$\Gamma \vdash  \Psi X = \Psi ( (\Lambda X )\langle \Psi X \rangle) :  \Phi \Lambda X$}\end{prooftree}

\item[($\Phi\eta$)]  The rule associated to the functor $\eta$ is our $\eta$-computation. Indeed, if we write down the rule explicitly we get the following.
     \begin{prooftree}\AxiomC{$\Gamma \vdash a: \Phi Y$}\LeftLabel{($\Phi\eta$)}\UnaryInfC{$\Gamma \vdash a = \Psi(\Phi Y \langle a \rangle) : \Phi Y$}\end{prooftree}
\end{itemize}

Additionally, and as in the case of dependent products in \Cref{congruence}, we have rules guaranteeing that definitional equality of terms and types is \quotmarks{preserved} through formation, introduction, and elimination. See thereof for a discussion on possible variations.

\begin{rem}[Weaker notions of type constructors]\label{nonsolopb}
Our definition of type constructor is very modular: for example, if we request that the square in \Cref{wphitypes} is a weak pullback (as opposed to a pullback) with a distinguished section, we can still construct the functors $(-)\langle - \rangle$ and $\Lambda \star \Psi$, and one of the two compositions still amounts to the identity. This ensures both elimination and $\beta$-computation, while we lose $\eta$-computation. This remark generalizes a similar analysis contained in \cite[Cor. 2.5]{awodey_2018}.
\end{rem}

We believe that \Cref{wphitypes} is more proof of both the computational and the expressive power of judgemental theories. We now use the construction above to enrich a pDTT with units and dependent sums. We reverse engineer the theory in order to provide the correct definition, and that will be all that we need because of the calculations above. By the end of this paper, we will have shown that \Cref{wphitypes} captures dependent products, dependent sums, unit types, extensional identity types. In addition, the construction in \cite[$\S$2.4]{awodey_2018} might suggest that it fits intensional identity, too, but we do not discuss this further here.

\begin{rem}[Other type constructors]
We are indeed aware that \Cref{wphitypes} does not capture {\it all} type constructors used in both the theory and the practice of type theory, for example it does not allow for the description of (co)inductive types, but we believe that our categorical theory of judgement has been proved fruitful in coding syntactic data. Clearly finite limits will capture finite constructions, but 2-category theory is much more than finite, nor it is only about limits, therefore we trust that with some effort this work could be extended to different constructors.
\end{rem}

\subsection{Examples: unit types and $\Sigma$-types}
\subsubsection{Dependent type theories with unit types}
Our aim is to describe the premises of introduction and formation, and the relation they are in. Recall that the rules in question are
\begin{minipage}{0.49\textwidth}
	\begin{prooftree}\AxiomC{$\vdash \Gamma \;{\tt ctx}$}\LeftLabel{(uI)}\UnaryInfC{$\Gamma\vdash \mathsf{1}_{\Gamma}\;{\tt Type}$}\end{prooftree}
\end{minipage}
\begin{minipage}{0.49\textwidth}
	\begin{prooftree}\AxiomC{$\vdash \Gamma \;{\tt ctx}$}\LeftLabel{(uF)}\UnaryInfC{$\Gamma\vdash \ast_\Gamma :  \mathsf{1}_{\Gamma}$}\end{prooftree}
\end{minipage}
\vspace{.08cm}

\noindent so that both only take in input a context, and the premises are identical, hence our motivation to give the following definition.

\begin{defn}[Unit-types]\label{unit}
	A plain dependent type theory {\it with unit-types} is a pDTT having two additional functors $\mathsf{1}$, $\ast$ such that the diagram below is commutative and the upper square is a pullback.
	\[\begin{tikzcd}
		\ctx && \duu \\
		\ctx && \uu \\
		& {\ctx}
		\arrow["\id"', from=1-1, to=2-1]
		\arrow["\ast"{description}, color={rgb,255:red,167;green,42;blue,42}, from=1-1, to=1-3]
		\arrow["{\mathsf{1}}"{description}, color={rgb,255:red,167;green,42;blue,42}, from=2-1, to=2-3]
		\arrow["\Sigma", from=1-3, to=2-3]
		\arrow["\id"', from=2-1, to=3-2]
		\arrow["u", from=2-3, to=3-2]
	\end{tikzcd}\]
\end{defn}
We now show that the judgemental theory generated by diagrams in \Cref{unit} contains codes for formation, introduction, elimination, and computation of unit types. Introduction and formation in fact read as follows

\begin{minipage}{0.49\textwidth}
	\begin{prooftree}\AxiomC{$\Gamma\vdash \Gamma \;\id$}\LeftLabel{($\mathsf{1}$)}\UnaryInfC{$\Gamma\vdash \mathsf{1}_{\Gamma}\; u$}\end{prooftree}
\end{minipage}
\begin{minipage}{0.49\textwidth}
	\begin{prooftree}\AxiomC{$\Gamma\vdash \Gamma \;\id$}\LeftLabel{($\ast$)}\UnaryInfC{$\Gamma\vdash (\ast_\Gamma, \mathsf{1}_{\Gamma})\; \dot{u}$}\end{prooftree}
\end{minipage}
\vspace{.09cm}

\noindent or, in our more familiar writing

\begin{minipage}{0.49\textwidth}
	\begin{prooftree}\AxiomC{$\vdash \Gamma \;{\tt ctx}$}\LeftLabel{(uI)}\UnaryInfC{$\Gamma\vdash \mathsf{1}_{\Gamma}\;{\tt Type}$}\end{prooftree}
\end{minipage}
\begin{minipage}{0.49\textwidth}
	\begin{prooftree}\AxiomC{$\vdash \Gamma \;{\tt ctx}$}\LeftLabel{(uF)}\UnaryInfC{$\Gamma\vdash \ast_\Gamma :  \mathsf{1}_{\Gamma}$}\end{prooftree}
\end{minipage}
\vspace{.08cm}

\noindent moreover, the elimination rule is captured by the unique map $\psi\colon \ctx.\mathsf{1}\duu \to \ctx$ and it translates to the syntactic writing on the left, while postcomposed with $\ast$ it translates as the more familiar rule on the right

\begin{minipage}{0.49\textwidth}
	\begin{prooftree}\AxiomC{$\Gamma \vdash t:\mathsf{1}_\Gamma$}\LeftLabel{($\psi$)}\UnaryInfC{$\vdash \Gamma \;{\tt ctx}$}\end{prooftree}
\end{minipage}
\begin{minipage}{0.49\textwidth}
	\begin{prooftree}\AxiomC{$\Gamma \vdash t:\mathsf{1}_\Gamma$}\LeftLabel{($\ast \circ \psi$)}\UnaryInfC{$\Gamma\vdash \ast_\Gamma : \mathsf{1}_{\Gamma}$}\end{prooftree}
\end{minipage}
\vspace{.08cm}

\noindent which is also denoted (uE). Finally, computation $\beta$ and $\eta$ can be decoded from the two following diagrams
\[\begin{tikzcd}
	\ctx &&&&& {\ctx.\mathsf{1}\duu} \\
	{\mathsf{Eq}} && \ctx && \duu & {\mathsf{Eq}} & {\ctx.\mathsf{1}\duu} && \duu
	\arrow["\ast", shift left=1, from=2-3, to=2-5]
	\arrow["{\ast \circ \psi\phi}"', shift right=1, from=2-3, to=2-5]
	\arrow[from=2-1, to=2-3]
	\arrow["\id"{description}, from=1-1, to=2-3]
	\arrow["\beta"', dashed, from=1-1, to=2-1]
	\arrow["{\mathsf{1}.\Sigma}", shift left=1, from=2-7, to=2-9]
	\arrow["{\mathsf{1}.\Sigma \circ\phi\psi}"', shift right=1, from=2-7, to=2-9]
	\arrow[from=2-6, to=2-7]
	\arrow["\id"{description}, from=1-6, to=2-7]
	\arrow["\eta"', dashed, from=1-6, to=2-6]
\end{tikzcd}\]
with $\phi$ the inverse to $\psi$, which read, respectively, as follows.

\begin{minipage}{0.49\textwidth}
	\begin{prooftree}\AxiomC{$\vdash \Gamma \;{\tt ctx}$}\LeftLabel{(u$\beta$)}\UnaryInfC{$\Gamma\vdash \ast_\Gamma =_{\mathsf{1}_\Gamma} \ast_\Gamma$}\end{prooftree}
\end{minipage}
\begin{minipage}{0.49\textwidth}
	\begin{prooftree}\AxiomC{$\Gamma \vdash t:\mathsf{1}_\Gamma$}\LeftLabel{(u$\eta$)}\UnaryInfC{$\Gamma\vdash t=_{\mathsf{1}_{\Gamma}}\ast_\Gamma $}\end{prooftree}
\end{minipage}

\subsubsection{Dependent type theories with $\Sigma$-types}\label{dttwithsigma}
We hope the reader will forgive us if to avoid confusion we adopt the unusual notation of $\rbag$ instead of $\Sigma$. We then start to look at rules for formation and introduction, which for sum types are usually the following.
\vspace{-0.3cm}

     \begin{prooftree}\AxiomC{$\Gamma \vdash A\; {\tt Type}$}\AxiomC{$\Gamma.A \vdash B\; {\tt Type}$}\LeftLabel{($\rbag$\textsc{F})}\BinaryInfC{$\Gamma \vdash \rbag_A B \; {\tt Type}$}\end{prooftree}

     \begin{prooftree}\AxiomC{$\Gamma \vdash A\; {\tt Type}$}\AxiomC{$\Gamma.A \vdash B\; {\tt Type}$}\AxiomC{$\Gamma\vdash a:A$}\AxiomC{$\Gamma\vdash b:B \la a \ra$}\LeftLabel{($\rbag$\textsc{I})}\QuaternaryInfC{$\Gamma \vdash \langle a,b\rangle:\rbag_A B$}\end{prooftree}

\vspace{.03cm}

\noindent In order to classify the premise of ($\rbag$F) we simply use $\uu.\Delta\uu$ from \Cref{typedep}. The premise of ($\rbag$I), instead, can be coded via the following nested judgement classifier
\[\begin{tikzcd}
	{(\duu.\Sigma\Delta\uu)\Sigma.\gamma\duu} &&& \duu \\
	{\duu.\Sigma\Delta\uu} & {\mathbb{U}.\Delta{\mathbb{U}}} && {\mathbb{U}} & {} \\
	\duu & {\mathbb{U}} & {\dot{\mathbb{U}}} & {\ctx}
	\arrow["\Delta"', from=3-2, to=3-3]
	\arrow["{\dot{u}}"', from=3-3, to=3-4]
	\arrow["u", from=2-4, to=3-4]
	\arrow[from=2-2, to=2-4]
	\arrow[from=2-2, to=3-2]
	\arrow[""{name=0, anchor=center, inner sep=0}, "\Sigma"', from=3-1, to=3-2]
	\arrow[from=2-1, to=3-1]
	\arrow[from=2-1, to=2-2]
	\arrow["\ulcorner"{anchor=center, pos=0.125}, draw=none, from=2-2, to=3-3]
	\arrow[""{name=1, anchor=center, inner sep=0}, "\gamma", color={rgb,255:red,214;green,92;blue,92}, curve={height=-24pt}, from=2-1, to=2-4]
	\arrow["\Sigma", color={rgb,255:red,214;green,92;blue,92}, from=1-4, to=2-4]
	\arrow[dashed, from=1-1, to=2-1]
	\arrow[curve={height=-24pt}, dashed, from=1-1, to=1-4]
	\arrow["\ulcorner"{anchor=center, pos=0.125}, draw=none, from=1-1, to=2-2]
	\arrow["\ulcorner"{anchor=center, pos=0.125}, draw=none, from=2-1, to=0]
	\arrow[shorten <=1pt, shorten >=1pt, Rightarrow, from=1, to=2-2]
\end{tikzcd}\]
with $\gamma = \pi \circ u^* id$ from \Cref{typedep}. The desired rule $\Lambda$, then, is the functor $(\duu.\Sigma\Delta\uu)\Sigma.\gamma\duu\to \uu.\Delta\uu $ appearing above.

\begin{defn}\label{wsumtypes}
A plain dependent type theory {\it with $\rbag$-types} is a pDTT as in \Cref{jdtt} having two additional rules $\rbag$, $\mathsf{pair}$ such that the diagram below is commutative and the upper square is a pullback.
\[\begin{tikzcd}
	{(\duu.\Sigma\Delta\uu)\Sigma.\gamma\duu} && {\dot{\mathbb{U}}} \\
	{\uu.\Delta\uu} & {} & {\mathbb{U}} \\
	& {\ctx}
	\arrow["\Sigma", from=1-3, to=2-3]
	\arrow[from=2-3, to=3-2]
    \arrow["\rbag"{description}, color={rgb,255:red,214;green,92;blue,92}, from=2-1, to=2-3]
	\arrow[from=2-1, to=3-2]
	\arrow["{\Sigma.(u.\dot{u}\Delta)\circ\Sigma.\gamma}"', from=1-1, to=2-1]
	\arrow["{\mathsf{pair}}"{description, pos=0.4}, color={rgb,255:red,214;green,92;blue,92}, from=1-1, to=1-3]
\end{tikzcd}\]
\end{defn}

\noindent A pDTT with $\rbag$-types immediately has formation and introduction (with $\langle a,b \rangle = \mathsf{pair}(a,b)$) and, as follows from the content of \Cref{wphitypes}, the three (admittedly hard to look at) rules below. We write $\Lambda$ for $\Sigma.(u.\dot{u}\Delta)\circ\Sigma.\gamma$.

\vspace{.03cm}

     \begin{prooftree}\AxiomC{$\Gamma \vdash c: \rbag_A B$}\LeftLabel{($\rbag$\textsc{E})}\UnaryInfC{$\Gamma \vdash \big(\rbag_A B \big)\langle c \rangle \; (\duu.\Sigma\Delta\uu)\Sigma.\gamma\duu$}\end{prooftree}

      \begin{prooftree}\AxiomC{$\Gamma \vdash c: \rbag_A B$}\LeftLabel{($\rbag\eta$)}\UnaryInfC{$\Gamma \vdash c = \mathsf{pair}(\big(\rbag_A B \big)\langle c \rangle ) : \rbag_A B$}\end{prooftree}

  \begin{prooftree}\AxiomC{$\Gamma \vdash X \; (\duu.\Sigma\Delta\uu)\Sigma.\gamma\duu$}\LeftLabel{($\rbag\beta$)}\UnaryInfC{$\Gamma \vdash  \mathsf{pair} (X) = \mathsf{pair} (\;( \Lambda\, X )\langle \mathsf{pair} (X) \rangle\;) : \rbag \Lambda X$}\end{prooftree}

If we break down the job of the classifier $(\duu.\Sigma\Delta\uu)\Sigma.\gamma\duu$ we recover the familiar following ones.




\vspace{.03cm}
 \begin{minipage}{0.5\textwidth}
     \begin{prooftree}\AxiomC{$\Gamma \vdash c: \rbag_A B$}\LeftLabel{($\rbag$\textsc{E1})}\UnaryInfC{$\Gamma \vdash \pi_1 c:A$}\end{prooftree}
 \end{minipage}
  \begin{minipage}{0.5\textwidth}
     \begin{prooftree}\AxiomC{$\Gamma \vdash c: \rbag_A B$}\LeftLabel{($\rbag$\textsc{E2})}\UnaryInfC{$\Gamma \vdash \pi_2 c : B[\pi_1 c]   $}\end{prooftree}
 \end{minipage}

      \begin{prooftree}\AxiomC{$\Gamma \vdash c: \rbag_A B$}\LeftLabel{($\rbag\eta$)}\UnaryInfC{$\Gamma \vdash c = \mathsf{pair}( \pi_1 c, \pi_2 c ) : \rbag_A B$}\end{prooftree}

\begin{minipage}{0.5\textwidth}
     \begin{prooftree}\AxiomC{$\Gamma\vdash a:A \quad \Gamma\vdash b:B \la a \ra$}\LeftLabel{($\rbag\beta1$)}\UnaryInfC{$\Gamma \vdash a = \pi_1 (\mathsf{pair}(a,b)):A$}\end{prooftree}
 \end{minipage}
  \begin{minipage}{0.5\textwidth}
     \begin{prooftree}\AxiomC{$\Gamma\vdash a:A \quad \Gamma\vdash b:B \la a \ra$}\LeftLabel{($\rbag\beta2$)}\UnaryInfC{$\Gamma \vdash b = \pi_2 (\mathsf{pair}(a,b)):B\la a \ra$}\end{prooftree}
 \end{minipage}

\section{First-order logic} \label{secnat}
In this section we design the judgemental theory that performs the calculus of natural deduction. As for \Cref{secdtt}, we introduce the basic judgements and rules and show how they generate the desired structure, then we add more rules to perform additional computations. Though we follow the path of the well-known fibrational approach to first order logic, we spend some time in re-developing it in the context of judgemental theories: this is meant to present the benefits of the judgemental approach, to compare the resulting structure with that of dependent types, and to give a pedagogical example of how one might want to implement a judgemental theory starting from notions which are known to be fibrational in nature.

\begin{disclaim}[Why we do not start from dependent type theory]\label{thisdischere}
As we discussed in \Cref{calculusproof}, one could very well follow \cite{MARTINLOF197573} and use \Cref{secdtt} as a starting point for this analysis by simply restricting it to the proof-irrelevant case. This is what is really happening in \Cref{jgen} - and an explicit construction is actually provided in \Cref{rem:nd_as_tt} - but we choose to recover the whole theory from scratch for two reasons: on one hand, we hope that it makes the present work accessible to the non-(type theorist), or to someone who is more familiar with traditional first-order logic; on the other we aim to more swiftly align to the tradition of doctrines \cite{equalityinhd,Pitts1983AnAO,Makkai1993TheFF,mr_quotientcompl13}.

Again, as explained in \ref{calculusproof}, our distinction is mathematically artificial, and we will remark that throughout our discussion, see for example \ref{dtycut}.
\end{disclaim}

\begin{disclaim}[Why we do not do Gentzen's sequent calculus]
On the other hand, we could have chosen to present first-order logic in the formalism of sequent calculus in \cite{Gentzen1935UntersuchungenD}. Though our framework allows us for it -- and in fact many of the categorical constructions in the following section do so, already, starting from \Cref{jgen} --  we have chosen to take the perspective of natural deduction because on one hand we believe that, being closer to how logic is used makes it easier to follow what each categorical operation is doing and, secondly, dealing with connective and quantifiers with pairs of introduction/elimination rules, as opposed to right/left introduction rules, helps to keep the connection with dependent types (\ref{thisdischere}) in the back of the reader's mind.
\end{disclaim}

\begin{defn}[Natural deduction theory]\label{jgen}
A {\it natural deduction theory} is a substitutional (\ref{substitutional}) judgemental theory $(\ctx, \classof{J}, \classof{R}, \classof{P})$
 such that
\begin{itemize}
    \item $\ctx$ is $\catof{Fin}$, the category of finite sets;
    \item $\classof{J}$ can be presented by \textit{one} judgement classifier $p: \mathbb{P} \to \ctx$, which is a faithful fibration, has fibered products and implication, and has fibered initial objects.
\end{itemize}
We think of $\ctx$ as the category of {\it variables and terms} and of $\ff$ as the category of {\it well-formed formulae} fibered over variables. We call this NDT for short.
\end{defn}

\begin{rem}[On cardinality]
We can define $\lambda$-ary theories but we would need to close judgemental theories under $\lambda$-small limits, and we would have to replace $\mathsf{Fin}$ with the category of $\lambda$-small sets.
\end{rem}

\begin{rem}[From doctrines to classifiers] \label{doctrines}
Let \hbox{$P: \ctx\opp \to \catof{Pos}$} be a \textit{doctrine}, intended in the most non-committal sense. Consider $\square: P^I \to P$ any operational property/structure on $P$, e.g.:
\begin{itemize}
    \item having (finite) fibered meets $\wedge: P^I \to P$;
    \item having (finite) fibered joins $\vee: P^I \to P$;
    \item having a negation operator $\neg : P \to P$.
\end{itemize}
then, by the Grothendieck construction, we obtain some corresponding diagram of fibrations,
\vspace{-0.3cm}
\[\begin{tikzcd}
	{\ff^I} && {\ff} \\
	\\
	& {\ctx}
	\arrow["{p^I}"{description}, from=1-1, to=3-2]
	\arrow["p"{description}, from=1-3, to=3-2]
	\arrow["\square"{description}, from=1-1, to=1-3]
\end{tikzcd}\]

\noindent This produces a pre-judgemental theory obtained by $\ff$, together with all its structural operators. For example, if $P$ in an Heyting algebra fiber-wise, we have operators $\bot, \top, \wedge, \vee, \Rightarrow$ of the proper arities on $\ff$.
\end{rem}

\begin{rem}[The arrow category] \label{adessoalvolo}
Consider a NDT. Because it is closed under finite powers (\Cref{powers}) we can compute
\[\begin{tikzcd}
	{\ff\due} && {\ff}
	\arrow[from=1-1, to=1-3]
\end{tikzcd}\]
and we will show how the operations defined on $\ff$ lift to (a suitable subcategory of) $\ff\due$ thanks to the closure under finite limits. We will come back to this in \Cref{ciaone}.
\end{rem}

\begin{rem}[Weakening]\label{weak}
Since $p$ has fibered products we can compute the following nested judgement (on the left)

\hfill\begin{minipage}{0.48\textwidth}
\[\begin{tikzcd}
	{\ff^\times} && \ff \\
	{\ctx^2} && \ctx
	\arrow["p", from=1-3, to=2-3]
	\arrow["{-\times -}"{description}, from=2-1, to=2-3]
	\arrow[dashed, from=1-1, to=2-1]
	\arrow[dashed, from=1-1, to=1-3]
	\arrow["\lrcorner"{anchor=center, pos=0.125}, draw=none, from=1-1, to=2-3]
\end{tikzcd}\]
 \end{minipage}
 \begin{minipage}{0.42\textwidth}
\[\begin{tikzcd}
	{\ctx^2} && \ctx \\
	{\ctx^2} && {\ctx^2}
	\arrow[""{name=0, anchor=center, inner sep=0}, "{-\times -}"{description}, curve={height=-12pt}, from=1-1, to=1-3]
	\arrow[""{name=1, anchor=center, inner sep=0}, "diag"{description}, curve={height=-12pt}, from=1-3, to=1-1]
	\arrow[""{name=2, anchor=center, inner sep=0}, "{diag_{-\times -}}"{description}, shift left=4, from=2-1, to=2-3]
	\arrow[""{name=3, anchor=center, inner sep=0}, "id"{description}, shift right=4, from=2-1, to=2-3]
	\arrow["\dashv"{anchor=center, rotate=90}, draw=none, from=1, to=0]
	\arrow["\epsilon", shorten <=2pt, shorten >=2pt, Rightarrow, from=2, to=3]
\end{tikzcd}\]
 \end{minipage}
with an adjunction $diag \dashv - \times -$ whose counit computes projections. These are well known in the literature and perform what is usually called {\it weakening}:
$$ x\times y \vdash x \;\ctx\,.$$

\noindent We can now define a span $(p^2, p^\times)$ out of $\ff^2$ (as a category, not as the fibration $\ff \times \ff$) with $p^\times:(\phi,\psi)\mapsto (\phi[\pr_1]\land \psi[\pr_2])$ the product of the respective cartesian lifts of $\phi,\psi$ along $\pr_1,\pr_2$,
\[\begin{tikzcd}
	& {\phi[\pr_1]\land\psi[\pr_2]} \\
	\phi & {p\phi\times p\psi} & \psi \\
	{p \phi} && {p \psi}
	\arrow["{\pr_1}"{description}, from=2-2, to=3-1]
	\arrow["{\pr_2}"{description}, from=2-2, to=3-3]
	\arrow[dashed, from=1-2, to=2-1]
	\arrow[dashed, from=1-2, to=2-3]
\end{tikzcd}\]
and such a span makes the diagram involving $\ff^\times$ commute, therefore we have a unique rule
$$ w: \ff^2 \to \ff^\times$$
over $\ctx^2$. If $p(\phi,\psi)=(x,y)$ we might denote $w(\phi,\psi)=w_y \phi \land w_x \psi$.
\end{rem}

\begin{defn}[NDT with weakening] 
A NDT is said to {\it have weakening} if for each $y\in\ctx$, $-\times y$ is in $\classof{J}$.

\end{defn}




%
%

In this section we will show that, in fact, a NDT produces the calculus of natural deduction. 

\subsection{Dictionary}\label{dict_gentzen}
As we did in \Cref{secdtt}, we declare a local dictionary, both to make the paper more comprehensible and to account for classical notation.

\begin{notat}[Stratified contexts] \label{doublectx}
Notice that already in \Cref{weak} we follow the intuition and use $x,y,\dots$ to name objects of $\ctx$. In fact, we here want to give a way to present judgements that are traditionally of the form
$$ x;\Gamma \vdash \psi$$
so that they read as having two contexts: the free variables in the formulae {\it and} the formula(e) in the premise of the sequent. In fact, we will ``stack up'' two fibrations so that the objects living on top ($\Gamma\Rightarrow\psi$) are both fibered on those in the middle ($\Gamma$) and those on the bottom ($x$). We hope to make it all clearer in the table that will follow.
\end{notat}

\begin{constr}[Entailment] \label{ciaone}
We wish to represent entailment between two formulae in the same context. In order to do that we pick in $\ff\due$ (see \Cref{adessoalvolo}) all objects belonging to the same fiber. Call $I:\ctx\to\ctx\due$ the functor mapping $\Gamma\mapsto id_\Gamma$ and compute the following (dashed) limit.
\[\begin{tikzcd}[ampersand replacement=\&]
	{\ff\due I.p\due \ctx} \&\& \ctx \\
	{\ff\due} \&\& {\ctx\due} \\
	\ff \&\& \ctx
	\arrow["p"{description}, from=3-1, to=3-3]
	\arrow["{\dom}", shift left=1, from=2-1, to=3-1]
	\arrow["{\cod}"', shift right=1, from=2-1, to=3-1]
	\arrow["{\cod}"', shift right=1, from=2-3, to=3-3]
	\arrow["{\dom}", shift left=1, from=2-3, to=3-3]
	\arrow["I", from=1-3, to=2-3]
	\arrow[dashed, from=1-1, to=2-1]
	\arrow[""{name=0, anchor=center, inner sep=0}, "{p\due}"{description}, from=2-1, to=2-3]
	\arrow[dashed, from=1-1, to=1-3]
	\arrow["\ulcorner"{anchor=center, pos=0.125}, draw=none, from=1-1, to=0]
\end{tikzcd}\]
Both the pullback and all universal arrows belong to the judgemental theory. The classifier we are interested in is the composition $ \ff\due I.p\due \ctx \to \ctx $, and will simply denote it with $e:\ee\to\ctx$.
\end{constr}

\begin{rem}[Natural deduction as a type theory]\label{rem:nd_as_tt}
One can check that $\ee$ and $\ff$ fit into a plain dependent type theory as follows
\[\begin{tikzcd}[ampersand replacement=\&]
	\ee \&\& \ff \\
	\& {\mathsf{ctx}}
	\arrow["e"', from=1-1, to=2-2]
	\arrow["p", from=1-3, to=2-2]
	\arrow[""{name=0, anchor=center, inner sep=0}, "\Sigma"{description}, from=1-1, to=1-3]
	\arrow[""{name=1, anchor=center, inner sep=0}, "\Delta"{description}, curve={height=12pt}, from=1-3, to=1-1]
	\arrow["\dashv"{anchor=center, rotate=90}, draw=none, from=0, to=1]
\end{tikzcd}\]
with $\Sigma:(!,\phi_\Gamma,\psi)\mapsto \psi$ and $\Delta:\phi\mapsto (\id,\phi,\phi)$. They clearly form an adjoint pair, with $\Sigma$ cartesian. The counit is the identity, while the unit at each entailment is the entailment itself, hence both have cartesian components.

When no connectives nor quantifiers are involved, then, one can see the case for first order logic as a particular instance of dependent type theory with faithful type fibration.
\end{rem}

\begin{rem}
The $\Gamma$ appearing in \Cref{doublectx} indicates a finite set of formulae in context $x$. We can see it as a product in $\ff$ and, when we want to do so, we will write $\phi_\Gamma$.
\end{rem}

We are finally ready to declare our local dictionary according to the notation of \Cref{judgement}, and that is the following.

\begin{center}
\begin{tabular}{ c | c }
\hline
  $ x \vdash e \; \ee \quad x\vdash(\dom\circ I.p\due)(e) =_{\ff} \phi_\Gamma \quad x\vdash(\cod\circ I.p\due)(e) =_{\ff} \psi$& $ x;\Gamma \vdash \psi $\\
  $ x \vdash e \; \ee \quad x\vdash(\dom\circ I.p\due)(e) =_{\ff} \phi_\Gamma\land\phi \quad x\vdash(\cod\circ I.p\due)(e) =_{\ff} \psi$& $ x;\Gamma,\phi \vdash \psi $\\
\hline
\end{tabular}
\end{center}


\begin{rem}\label{sameold}
Consider that in the case that $(\; x;\bot\vdash \phi \; \ff \quad\text{and}\quad x \vdash \phi\to\psi \; \ee\;)$ then $x \vdash \phi \Rightarrow \psi \; \ff,$
therefore our framework accounts for the classical correspondence for all $x,\phi,\psi$,
$$x\vdash \phi \Rightarrow \psi \quad\text{iff}\quad x;\phi \vdash \psi.  $$
\end{rem}

\begin{rem}
Since each $p$-fiber is thin, there is at most one $e\in\ee$ between each pair of objects $(\phi,\psi)\in\ff\times\ff$.
\end{rem}

\begin{notat}
In order to make our calculations more readable, we pin-point a specific notation for  $\cod,\dom$ in the case that they follow the inclusion of $\ee$ into $\ff\due$. We creatively write $c$ and $d$, respectively.
\end{notat}

\subsection{From properties to rules}\label{proptorules}
Before we begin our analysis of rules of natural deduction, we show how certain properties lift from $\ff$ (the category) to $\ee$ (the judgement classifier). These will be instrumental in building up rules from $p$. In a sense, this subsection shows how to turn {\it internal properties of $p$} into {\it external rules about $p$}, which is precisely what we did for contexts in \Cref{toytt}.

\begin{rem}[The domain-codomain policy] \label{triv}
Since we will frequently use either $c$ or $d$ to select the consequent or the antecedent of a sequent, it will be useful to have a way to relate the two. The proof of \Cref{trans} is clear evidence in this sense. There is a trivial policy
\[\begin{tikzcd}
	{\ff\due} && {\ff\due} \\
	\\
	& \ff
	\arrow[""{name=0, anchor=center, inner sep=0}, "{\dom}"{description}, from=1-1, to=3-2]
	\arrow["{\cod}"{description}, from=1-3, to=3-2]
	\arrow["\id"{description}, from=1-1, to=1-3]
	\arrow["\alpha"{description}, shorten <=8pt, shorten >=8pt, Rightarrow, from=0, to=1-3]
\end{tikzcd}\]
where $\alpha_{\psi\to\phi} = (\psi\to\phi)$. Note that if useful we might bravely invert the direction of $\id$. We call $\alpha$, too, the obvious whiskering $d\Rightarrow c$.
\end{rem}

\begin{lem}[A special instance of cut] \label{trans}
The relation captured by $e:\ee\to\ctx$ is transitive in the sense that the rule below is in the NDT.
 \begin{prooftree}\AxiomC{$x;\psi\vdash \phi$}\AxiomC{$x;\phi\vdash \chi$}\LeftLabel{(T)}\BinaryInfC{$x;\psi \vdash \chi$}\end{prooftree}

\begin{proof}
Consider the following $\sharp$-lifting of the triangle in \Cref{triv} along $d$.
\[\begin{tikzcd}
	{\ee d.c\ee} &&& \ee & {(x;\psi\vdash\phi,\,x;\phi\vdash\chi)}  \\
	& {\ee d.d \ee} &&& {(x;\psi\vdash\phi,\,x;\psi\vdash\chi)} \\
	\ee &&& \ff &  \\
	& \ee
	\arrow["d"{description}, from=4-2, to=3-4]
	\arrow[""{name=0, anchor=center, inner sep=0}, "c"{description}, from=3-1, to=3-4]
	\arrow["\id"', curve={height=6pt}, from=3-1, to=4-2]
	\arrow["{d.d}"{description}, from=2-2, to=1-4]
	\arrow[""{name=1, anchor=center, inner sep=0}, "{c.d}"{description}, from=1-1, to=1-4]
	\arrow["{d^* \id}"'{pos=0.6}, curve={height=6pt}, dashed, from=1-1, to=2-2]
	\arrow["d", from=1-4, to=3-4]
	\arrow[from=2-2, to=4-2]
	\arrow[from=1-1, to=3-1]
	\arrow[shift right=1, dashed, maps to, from=1-5, to=2-5]
	\arrow["\alpha"'{pos=0.7}, shorten <=2pt, shorten >=4pt, Rightarrow, from=4-2, to=0]
	\arrow["{d^*\alpha}"{pos=0.4}, shorten <=2pt, shorten >=4pt, Rightarrow, dashed, from=2-2, to=1]
\end{tikzcd}\]
A little computation shows that the upper triangle reads as on the right, producing the desired rule
$$ t:=d.d \circ d^* \id : \ee d.c\ee \to \ee.$$
\end{proof}
\end{lem}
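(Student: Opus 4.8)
The plan is to exhibit (T) as a composite $d.d\circ d^{*}\mathrm{id}$, obtained by $\sharp$-lifting the trivial domain--codomain policy $\alpha$ of \Cref{triv} along the fibration $d\colon\ee\to\ff$ and then projecting.

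First I would build the classifier of the premises of (T). A premise is a pair of entailments $x;\psi\vdash\phi$ and $x;\phi\vdash\chi$ living over the same $x\in\ctx$ and sharing the formula $\phi$, which is the consequent of the first and the antecedent of the second; categorically this is the pullback over $\ff$ of $c\colon\ee\to\ff$ against $d\colon\ee\to\ff$, which following \Cref{thenotation} I write $\ee d.c\ee$. Closure under pullbacks (\Cref{pull}) puts this classifier and its projections in the jNDT, and unwinding the dictionary of \Cref{dict_gentzen} shows that $\Gamma\vdash(e_{1},e_{2})\;\ee d.c\ee$ is precisely an alias for the two sequents above.

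Second, I would record that $d\colon\ee\to\ff$ is a Grothendieck fibration: since the prestack of $p$ is thin and reindexing preserves the fibered order, the cartesian lift of a morphism $\beta$ in $\ff$ at an entailment $(\psi\vdash\phi)$ is the (unique, by thinness) entailment obtained by reindexing $\phi$ along $p(\beta)$; in particular reindexing along a vertical arrow $\psi'\to\psi$ simply weakens the antecedent, $(\psi\vdash\phi)\mapsto(\psi'\vdash\phi)$. The policy $\alpha\colon d\Rightarrow c$ of \Cref{triv}, restricted from $\ff^{\to}$ to $\ee$, has component at $e=(\psi\vdash\phi)$ the witnessing arrow $\psi\to\phi$ of $\ff$. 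Applying $\sharp$-lifting (\Cref{lifting}) to $\alpha$ along $d$, with $f=c$, $g=d$, $\lambda=\mathrm{id}$ so that $\alpha$ is the contravariant policy $g\lambda=d\Rightarrow c=f$, produces a rule $d^{*}\mathrm{id}\colon\ee d.c\ee\to\ee d.d\ee$. Unwinding the construction of $\mathcal{R}^{*}\lambda$ from \Cref{lifting}: the pair $(e_{1},e_{2})=(\psi\vdash\phi,\ \phi\vdash\chi)$ is sent to $(e_{1},\ e_{2}[\alpha_{e_{1}}])$, where $e_{2}[\alpha_{e_{1}}]$ is the cartesian relift of $e_{2}$ along $\alpha_{e_{1}}\colon\psi\to\phi$, i.e.\ the entailment $\psi\vdash\chi$; so $d^{*}\mathrm{id}$ sends $(\psi\vdash\phi,\ \phi\vdash\chi)$ to the pair $(\psi\vdash\phi,\ \psi\vdash\chi)$ living in $\ee d.d\ee$.

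Finally, I would postcompose with the projection $d.d\colon\ee d.d\ee\to\ee$ onto the second coordinate, obtaining $t:=d.d\circ d^{*}\mathrm{id}\colon\ee d.c\ee\to\ee$, which sends $(e_{1},e_{2})$ to $\psi\vdash\chi$; via the dictionary this is exactly the conclusion of (T). Both $d^{*}\mathrm{id}$ (closure under $\sharp$-liftings) and $d.d$ (a limiting projection, closure under pullbacks) lie in the jNDT, and $\mathcal{R}$ is closed under composition, so $t$ is in the jNDT as required. I expect the main subtlety to be the bookkeeping: pinning down which of $c$ and $d$ plays the role of $f$, of $g$, and of the lifting fibration $\mathcal{R}^{*}$, so that the $\sharp$-lift relifts along the correct witnessing arrow and lands in $\ee d.d\ee$ rather than in a pullback gluing the wrong formulas; once that is fixed, checking that $d$ is a fibration and computing $e_{2}[\alpha_{e_{1}}]=(\psi\vdash\chi)$ are both immediate from thinness and monotonicity of reindexing.
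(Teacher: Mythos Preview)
Your proof is correct and follows exactly the same route as the paper: build the premise classifier $\ee d.c\ee$, $\sharp$-lift the domain--codomain policy $\alpha$ of \Cref{triv} along the fibration $d$ to obtain $d^{*}\mathrm{id}\colon\ee d.c\ee\to\ee d.d\ee$, and then project via $d.d$ to get $t$. You supply more detail than the paper does (the explicit verification that $d$ is a fibration, the computation $e_{2}[\alpha_{e_{1}}]=(\psi\vdash\chi)$, and the closure argument), but the strategy and the resulting rule $t=d.d\circ d^{*}\mathrm{id}$ are identical.
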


\begin{lem}[Preservation through product as a rule] \label{prodpres}
Interaction of arrows and products in $\ff$ (the category) lifts to $\ee$ (the judgement classifier) in the sense that the rule below is in the NDT. \begin{prooftree}\AxiomC{$x;\psi\vdash \phi$}\AxiomC{$x;\bot\vdash \chi$}\LeftLabel{(F)}\BinaryInfC{$x;\psi\land\chi \vdash \phi\land\chi$}\end{prooftree}

\begin{proof}
It is coded by a functor $f:\ee\times\ff\to\ee$ which to pairs $(\psi\to\phi,\chi)$ over some $x$ assigns the unique map $\psi\land\chi \to \phi\land\chi$ defined via the universal property of the product in the fiber over $x$.
\end{proof}
\end{lem}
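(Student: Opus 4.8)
The plan is to realise the displayed deduction (F) as a rule of the jNDT, i.e.\ as a functor belonging to $\mathcal{R}$ from the classifier of its premises to $\ee$, assembled from the closure operations of \Cref{mainjt}. First I would read off the data. By the dictionary of \Cref{dict_gentzen}, the premise $x;\psi\vdash\phi$ is an object of $\ee$ over $x$, i.e.\ a fibre arrow $g\colon\psi\to\phi$ of $p$; the premise $x;\bot\vdash\chi$ carries, since $\bot$ is the fibred initial object of $\ff$, no more than the datum of a formula $\chi$ over $x$. So the joint premise is classified by the pullback $\ee\times\ff$ of $e$ and $p$ over $\ctx$, which lies in $\mathcal{R}$ by \Cref{pull}. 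The conclusion $x;\psi\land\chi\vdash\phi\land\chi$ is again a fibre arrow of $p$, hence an object of $\ee$. It therefore suffices to produce a functor $f\colon\ee\times\ff\to\ee$ over $\ctx$, in $\mathcal{R}$, acting on objects by $(g\colon\psi\to\phi,\,\chi)\mapsto\bigl(\psi\land\chi\to\phi\land\chi\bigr)$, the target being the arrow obtained from the universal property of the fibre product $\phi\land\chi$ applied to the cone $\bigl(\psi\land\chi\to\psi\xrightarrow{g}\phi,\ \psi\land\chi\to\chi\bigr)$.

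To build $f$ I would use the fibrewise product functor $\land\colon\ff\times\ff\to\ff$ over $\ctx$, with $\ff\times\ff$ the pullback of $p$ along itself; this is a rule of the theory, either because it is among the structural operators the jNDT is equipped with (\Cref{doctrines}), or, if one prefers to derive it, because it is the right adjoint to the fibrewise diagonal $\ff\to\ff\times\ff$ and such an adjoint is produced inside the expressive power of the theory by a comma category, which is a finite $2$-limit, exactly as in \Cref{tous}. The functor on arrow categories induced by $\land$ (\Cref{powers}) is then a rule $(\ff\times\ff)^{\to}\to\ff^{\to}$, and there is a functor $\ee\times\ff\to(\ff\times\ff)^{\to}$, again assembled from pullbacks and powers out of the inclusion $\ee\hookrightarrow\ff^{\to}$, the identity-assigning functor $\ff\to\ff^{\to}$ and pairing over $\ctx$, which sends $(g\colon\psi\to\phi,\,\chi)$ to $(g,\mathrm{id}_\chi)\colon(\psi,\chi)\to(\phi,\chi)$. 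Its image under the preceding rule is $g\land\mathrm{id}_\chi\colon\psi\land\chi\to\phi\land\chi$, which lives in a single fibre and thus factors through $\ee\hookrightarrow\ff^{\to}$; this factorisation is a limit leg, hence in $\mathcal{R}$. The resulting $f$ is a composite of rules, so a rule, and the deduction (F) it names is in the jNDT. Closer to the proof of \Cref{trans}, one may alternatively obtain $f$ by $\sharp$-lifting the projection policy $\land\Rightarrow pr_1$ on $\ff\times\ff$ along $d\colon\ee\to\ff$ and whiskering as in \Cref{triv}.

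I expect the only genuine subtlety to be this first step: certifying that the fibrewise product is a bona fide element of $\mathcal{R}$, not merely an abstract functor. Granted — as the paper grants for the Heyting operators in \Cref{doctrines} and for adjoints via the Scott-trace in \Cref{tous} — that such constructions are performed inside the expressive power of a judgemental theory, the remainder is routine bookkeeping with the pullbacks, powers and universal arrows that \Cref{pull} and \Cref{powers} keep within $\mathcal{R}$.
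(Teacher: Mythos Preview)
Your proposal is correct and takes essentially the same approach as the paper: both construct the functor $f:\ee\times\ff\to\ee$ sending $(\psi\to\phi,\chi)$ to $\psi\land\chi\to\phi\land\chi$ via the universal property of the fibred product. The paper's proof is a single sentence that simply asserts this functor exists, whereas you unpack in detail why each ingredient (the fibrewise $\land$, the passage to arrow categories, the factorisation through $\ee$) lies in $\mathcal{R}$ --- a level of care the paper does not spell out but takes as granted from \Cref{doctrines} and the closure clauses of \Cref{mainjt}.
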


\subsection{Formal structural rules} \label{structrules}
Here we show that an NDT generates the following formal structural rules.

\begin{minipage}{0.38\textwidth}
     \begin{prooftree}\AxiomC{}\LeftLabel{(H)}\UnaryInfC{$x;\Gamma,\phi \vdash \phi$}\end{prooftree}
 \end{minipage}
 \begin{minipage}{0.3\textwidth}
     \begin{prooftree}\AxiomC{$x;\Gamma,\Delta \vdash \phi$}\LeftLabel{(Sw)}\UnaryInfC{$x;\Delta,\Gamma \vdash \phi$}\end{prooftree}
 \end{minipage}
 \begin{minipage}{0.28\textwidth}
     \begin{prooftree}\AxiomC{$x;\Gamma,\psi,\psi \vdash \phi$}\LeftLabel{(C)}\UnaryInfC{$x;\Gamma,\psi \vdash \phi$}\end{prooftree}
 \end{minipage}\hfill

 \hspace{1.3cm}
 \begin{minipage}{0.3\textwidth}
     \begin{prooftree}\AxiomC{$x;\Gamma \vdash \phi$}\LeftLabel{(W)}\UnaryInfC{$x;\Gamma,\psi \vdash \phi$}\end{prooftree}
 \end{minipage}
 \begin{minipage}{0.5\textwidth}
     \begin{prooftree}\AxiomC{$x;\Gamma \vdash \phi$}\AxiomC{$x;\Gamma,\phi \vdash \psi$}\LeftLabel{(Cut)}\BinaryInfC{$x;\Gamma \vdash \psi$}\end{prooftree}
 \end{minipage}
\vspace{.08cm}

\noindent We break the discussion into three parts.

\subsubsection{ Hypothesis and the simple fibration}\label{ruleH}
Clearly for each pair $(\Gamma,\phi)$ over the same context, we have that $\pr_2: \phi_{\Gamma}\land\phi\to \phi$, therefore $\ee$ classifies $x;\Gamma,\phi\vdash\phi$.
\begin{itemize}
    \item[(H)] The Hypothesis rule, then, is coded into the existence of $e$ itself.
\end{itemize}
We would be content with this already, but it is worth noticing that the association performing the projection $\pr_2$
$$ (\Gamma,\phi) \mapsto (\phi_{\Gamma}\land \phi \to \phi)$$
can be described functorially, and it contains some profound information. Such functor, in fact, provides an insight into possible developments of the present work, plus it (almost) allows for a presentation of the {\it simple fibration} from \cite{jacobs1999categorical}, which has a meaningful logical interpretation: it constitutes the ``least informative'' type theory one can observe over a category with finite products. Therefore we say a little more about that.


\begin{defn}[The simple fibration]\label{simplefib}
Define on the category $\ee$ the monad comprised of the following data:
\begin{itemize}
    \item the functor $S:\ee \to \ee$ acting as follows
    $$ \psi \to \phi \quad\mapsto\quad \psi\land\phi \to \psi\to \phi;$$
    \item the $2$-cell $\eta:Id\Rightarrow S$ defined via the universal property of products;
    \item the $2$-cell $\mu: S\circ S \Rightarrow S$ acting as $(\pr_1,\id)$.
\end{itemize}
\end{defn}
\begin{rem}
$(S,\eta,\mu)$ is idempotent. This is because $\mu$ acts as follows
\[\begin{tikzcd}
	{(\psi\land\phi)\land\phi} & \psi\land\phi & \psi & \phi \\
	\psi\land\phi && \psi & \phi
	\arrow["{\pr_1}"{description}, dashed, from=1-1, to=2-1]
	\arrow[from=1-1, to=1-2]
	\arrow[from=1-2, to=1-3]
	\arrow[from=1-3, to=1-4]
	\arrow["\id"{description}, dashed, from=1-4, to=2-4]
	\arrow[from=2-1, to=2-3]
	\arrow[from=2-3, to=2-4]
\end{tikzcd}\]
and $(\psi\land\phi)\land\phi=\psi\land\phi$ because $p$ is thin and its products are fibered, and in fact the forgetful functor from algebras over $S$ into $\ee$ is fully faithful. All $S$-algebras are free.
\end{rem}
The Kleisli category of $S$ is equivalent to (the total category) of what in \cite{jacobs1999categorical} is called the {\it simple fibration} associated to $p$. That is $\overline{p}:s(\ff)\to\ff$ where $s(\ff)$ has for objects pairs $(\phi,\phi')$ in the same $p$-fiber and maps $a=(a_1,a_2):(\phi,\phi')\to(\psi,\psi')$ such that $a_1:\phi\to\psi$, $a_2:\phi\land\phi'\to\psi'$, and $p(a_1)=p(a_2)$. The functor $\overline{p}$ acts as the first projection. If we call $1:\ctx\to\ff$ the (fibered) terminal object functor, one checks that $ \overline{p}.1 \cong p$. Moreover, the functor
$$ q:\;(\phi,\phi')\;\mapsto\; (\phi\land\phi'\to\phi)$$
induces a comprehension category (as in \Cref{tojacobs}) $s(\ff)\to\ff^{\to}$. The type theory associated to such a functor is (that equivalent to) untyped lambda calculus.
\noindent The functor $S$ induces the following rule.

 \begin{minipage}{0.4\textwidth}
\[\begin{tikzcd}
	{\ee} && {\ee} \\
	& {\ff} \\
	& {\ctx}
	\arrow["p"{description}, from=2-2, to=3-2]
	\arrow["{c}"{description}, curve={height=6pt}, from=1-1, to=2-2]
	\arrow["c"{description}, curve={height=-6pt}, from=1-3, to=2-2]
	\arrow["{S}"{description}, from=1-1, to=1-3]
\end{tikzcd}\]
 \end{minipage}
 \begin{minipage}{0.4\textwidth}
     \begin{prooftree}\AxiomC{$x;\Gamma\vdash\phi$}\doubleLine\LeftLabel{Dictionary in \ref{dict_gentzen}}\UnaryInfC{$x\; \vdash \phi_{\Gamma}\to\phi\; \ee$}\LeftLabel{(S)}\UnaryInfC{$x \vdash S(\phi_{\Gamma}\to\phi)  \; \ee$}\doubleLine\LeftLabel{\Cref{simplefib}}\UnaryInfC{$x \vdash \phi_{\Gamma}\land\phi\to \phi  \;\ee$}
     \doubleLine\LeftLabel{Dictionary in \ref{dict_gentzen}}\UnaryInfC{$x;\Gamma,\phi \vdash \phi $}\end{prooftree}
 \end{minipage}\hfill

\subsubsection{Swap and Contraction: fibered products everywhere}
\begin{itemize}
    \item[(Sw)] The Swap rule holds because the fibered product is symmetric and this too is expressed via a commutative triangle: consider the following composition
    \[\begin{tikzcd}
    	\ff\times\ff & \ff\times\ff && \ff
    	\arrow["\land"{description}, from=1-2, to=1-4]
    	\arrow["\sim", draw=none, from=1-1, to=1-2]
    	\arrow["s"{description}, from=1-1, to=1-2]
    \end{tikzcd}\]
    where the map $s$ computes the permutation. The desired rule is computed as the (iso)morphism
    $$ s.(d.\land): (\ff\times\ff).\ee \to (\ff\times\ff).\ee.$$
    \item[(C)] Contraction is supported by the following dashed map
        \[\begin{tikzcd}
        	{(\ff\times\ff)d.(\land\!\circ \id\times\Delta)\ee} &&&& \ee \\
        	&&& {(\ff\times\ff)d.\!\land\!\ee} \\
        	\ff\times\ff && \ff\times\ff\times\ff && \ff \\
        	&&& \ff\times\ff
        	\arrow["\land"{description}, from=4-4, to=3-5]
        	\arrow["\id"{description}, from=3-1, to=4-4]
        	\arrow["\id\times\Delta"{description}, from=3-1, to=3-3]
        	\arrow["{\id\times \pr_1}"{description}, from=3-3, to=4-4]
        	\arrow["\land"{description, pos=0.6}, from=3-3, to=3-5]
        	\arrow["d", from=1-5, to=3-5]
        	\arrow[from=2-4, to=4-4]
        	\arrow[from=2-4, to=1-5]
        	\arrow["\lrcorner"{anchor=center, pos=0.125}, draw=none, from=2-4, to=3-5]
        	\arrow[from=1-1, to=3-1]
        	\arrow[from=1-1, to=1-5]
        	\arrow["\lrcorner"{anchor=center, pos=0.125}, draw=none, from=1-1, to=3-3]
        	\arrow[dashed, from=1-1, to=2-4]
        \end{tikzcd}\]
    where we write $\land$ for the obvious product $\ff\times\ff\times\ff\to\ff$. On the bottom we have the triangle on the left commuting, and $\id\times\Delta$ equalizing $\land$ and $\id\times \pr_1 \circ \land$. The two nested judgements on the top classify, respectively, the antecedent and the consequent of (C), and the dashed map exists by the universal property of the ``smaller'' pullback.
\end{itemize}

\subsubsection{Weakening and Cut: more transitivity}

We will see that to provide both Weakening and Cut it is sufficient to apply (T) from \Cref{trans} to appropriate triples. Let us start with (W), first: notice that, as it happened in \Cref{secdtt} and is evident from \Cref{dict_gentzen}, the consequent in (W) is actually silent of (at least) one judgement, that is $x;\bot\vdash \psi$. The procedure we follow for (W) is that of
\vspace{-0.5cm}

     \begin{prooftree}\AxiomC{$x;\Gamma \vdash \phi$}\AxiomC{$(\,x;\bot\vdash \psi\,)$}\LeftLabel{(F)}\BinaryInfC{$x;\Gamma,\psi \vdash \phi\land\psi$}

     \AxiomC{$(x;\Gamma \vdash \phi$}\AxiomC{$\,x;\bot\vdash \psi\,)$}\BinaryInfC{
     $x;\phi\land\psi \vdash \phi$}

      \LeftLabel{(T)}\BinaryInfC{$x;\Gamma,\psi \vdash \phi $}
     \end{prooftree}
therefore we need to pre-process the premise of $t$ in order to apply it to triples of the form $(\phi_{\Gamma}\land\psi,\phi\land\psi,\phi)$. This is achieved via the following diagram
\[\begin{tikzcd}
	\ee\times\ff & \ff\times\ff \\
	& {\ee d.c\ee} & \ee \\
	& \ee & \ff
	\arrow["c"{description}, from=3-2, to=3-3]
	\arrow["d"{description}, from=2-3, to=3-3]
	\arrow[from=2-2, to=3-2]
	\arrow[from=2-2, to=2-3]
	\arrow["\lrcorner"{anchor=center, pos=0.125}, draw=none, from=2-2, to=3-3]
	\arrow["f"{description}, curve={height=12pt}, from=1-1, to=3-2]
	\arrow["{c\times \id}", curve={height=-6pt}, from=1-1, to=1-2]
	\arrow[dashed, from=1-1, to=2-2]
	\arrow["{q_1}"{description}, curve={height=-6pt}, from=1-2, to=2-3]
\end{tikzcd}\]
with $q_1$ the map $(\phi,\psi)\mapsto (\phi\land\psi\to \phi)$ acting on pairs in the same $p$-fiber. Notice how this is related to $q$ in \Cref{simplefib}.

\begin{rem}[Cones and branches]\label{lovebranches}
Here branches in the tree of a deduction correspond to cones over limit diagrams. We could make this statement more precise, but we hope the following discussion speaks for itself.
\end{rem}
\begin{itemize}
    \item[(W)] Weakening is computed by the dashed arrow above followed by $t$ from \Cref{trans}.
\end{itemize}
For (Cut) we again apply \Cref{trans}, this time to the triple $(\phi_{\Gamma},\phi_{\Gamma}\land\phi,\psi)$, that is we will build the diagram corresponding to the following composing rules.

 \begin{prooftree}
    \AxiomC{$x;\Gamma \vdash \phi\quad x;\Gamma,\phi \vdash \psi$}
    \UnaryInfC{$x;\Gamma \vdash \phi$}\UnaryInfC{$x;\Gamma \vdash \phi_\Gamma\land\phi$}

    \AxiomC{$x;\Gamma \vdash \phi\quad x;\Gamma,\phi \vdash \psi$}
    \UnaryInfC{$x;\Gamma,\phi \vdash \psi$}\doubleLine\UnaryInfC{
     $x;\phi_\Gamma\land\phi \vdash \psi$}

      \LeftLabel{(T)}\BinaryInfC{$x;\Gamma \vdash \psi $}
     \end{prooftree}

\noindent Therefore we want a map from $\ee d. dS\ee$, classifying the premise of Cut, into $\ee d.c \ee$ so that we then can apply (T). That is achieved as follows.
\[\begin{tikzcd}
	{\ee d. dS\ee} \\
	\ee & {\ee d.c\ee} & \ee \\
	\ee\times\ff & \ee & \ff
	\arrow["c"{description}, from=3-2, to=3-3]
	\arrow["d"{description}, from=2-3, to=3-3]
	\arrow[from=2-2, to=3-2]
	\arrow[from=2-2, to=2-3]
	\arrow["\lrcorner"{anchor=center, pos=0.125}, draw=none, from=2-2, to=3-3]
	\arrow[dashed, from=1-1, to=2-2]
	\arrow["{dS.d}"{description}, curve={height=-12pt}, from=1-1, to=2-3]
	\arrow["{d.dS}"', from=1-1, to=2-1]
	\arrow["{(\id,d)}"', from=2-1, to=3-1]
	\arrow["f"{description}, from=3-1, to=3-2]
\end{tikzcd}\]
\begin{itemize}
    \item[(Cut)] Cut is computed by the dashed arrow above followed by $t$ from \Cref{trans}.
\end{itemize}


\begin{rem}[Cut is a policy]
While perhaps not evident, the Cut rule is in fact a policy in the sense of \Cref{jt}: it just preprocesses data going into the policy (T) from \Cref{trans}.
\[\begin{tikzcd}
	{\ee d. dS\ee} & {\ee d.c\ee} & {\ee d.d \ee} \\
	& \ff
	\arrow[""{name=0, anchor=center, inner sep=0}, from=1-2, to=2-2]
	\arrow[draw={rgb,255:red,214;green,92;blue,92}, from=1-2, to=1-3]
	\arrow[draw={rgb,255:red,214;green,92;blue,92}, from=1-1, to=1-2]
	\arrow[curve={height=-6pt}, from=1-3, to=2-2]
	\arrow[draw={rgb,255:red,214;green,92;blue,92}, shorten >=6pt, Rightarrow, from=1-3, to=0]
\end{tikzcd}\]
\end{rem}

\begin{rem}[Relationship between DTy and Cut] \label{dtycut}
As we have seen, at the very core of Cut sits the policy (T) from \Cref{trans}. The reader will notice the incredible similarity between the process that constructs (T) and the process that constructs (DTy).
\vspace{-0.1cm}
\[\begin{tikzcd}
	{\ee d. dS\ee} & {} & {\ee d.d \ee} & {\dot{\mathbb{U}}.\Delta\Sigma\uu} && {\duu \times \uu} \\
	& \ff &&& \ctx
	\arrow[""{name=0, anchor=center, inner sep=0}, curve={height=6pt}, from=1-1, to=2-2]
	\arrow[from=1-1, to=1-3]
	\arrow[curve={height=-6pt}, from=1-3, to=2-2]
	\arrow[""{name=1, anchor=center, inner sep=0}, curve={height=6pt}, from=1-4, to=2-5]
	\arrow[curve={height=-6pt}, from=1-6, to=2-5]
	\arrow[from=1-4, to=1-6]
	\arrow[shorten >=9pt, Rightarrow, from=1-3, to=0]
	\arrow[shorten >=9pt, Rightarrow, from=1-6, to=1]
\end{tikzcd}\]
Not only do the diagrams in \Cref{trans} and in \Cref{ctxextdavero} look very similar, but even their ingredients have affine logical meaning. Indeed, in both cases the hypothesis of the policy is a nested judgement where a modality appears: in the case of natural deduction, this is the monad $S$, in the case of dependent type theory it is the monad $\Delta\Sigma$. Of course, some delicate differences appear too\footnote{For example the {\it height} at which one performs the action of the monad, that is $\duu$ and $\uu$ are manipulated over contexts while both instances of $\ee$ are bounded to formulae.}. This kind of thoughts could lead to a general notion of \textit{cut}, a special family of policies, but we leave such a task for a possible future work.
\end{rem}

\subsection{Formal rules for connectives}\label{connrules}
We here show that the moment we ask that connectives are closed under $p$-fibers, with $p$ a NDT, we automatically get the expected rules. Since \Cref{jgen} already contains the requirement that $p$ has fibered products, we here show how to provide in a NDT rules for $\land$, and need to ask nothing more of it. If the reader inspects the constructions below, they will see that such a procedure could be repeated for NDTs having $p$ {\it additionally} equipped with $\lor,\neg$.

The ones which are usually required for $\land$ are the following.

\vspace{.03cm}
\begin{minipage}{0.38\textwidth}
     \begin{prooftree}\AxiomC{$x;\Gamma \vdash \phi$}\AxiomC{$x;\Gamma\vdash\psi$}\LeftLabel{($\land$\textsc{I})}\BinaryInfC{$x;\Gamma \vdash \phi\land\psi$}\end{prooftree}
 \end{minipage}
 \begin{minipage}{0.3\textwidth}
     \begin{prooftree}\AxiomC{$x;\Gamma \vdash \phi\land \psi$}\LeftLabel{($\land$\textsc{E1})}\UnaryInfC{$x;\Gamma \vdash \phi$}\end{prooftree}
 \end{minipage}
 \begin{minipage}{0.28\textwidth}
     \begin{prooftree}\AxiomC{$x;\Gamma \vdash \phi\land \psi$}\LeftLabel{($\land$\textsc{E2})}\UnaryInfC{$x;\Gamma \vdash \psi$}\end{prooftree}
 \end{minipage}
\vspace{.01cm}

\begin{itemize}
    \item[($\land$\textsc{I})] Introduction is represented by the functor $conj:\ee d.d\ee\to \ee$ induced by the universal property of the fibered product.
    \item[($\land$\textsc{E1})] In order to represent its domain, we compute the equalizer
\[\begin{tikzcd}
	{\mathbb{E}(d.d,conj)} & {\ee d.d\ee} & \ee
	\arrow["{d.d}", shift left=1, from=1-2, to=1-3]
	\arrow[dashed, from=1-1, to=1-2]
	\arrow["conj"', shift right=1, from=1-2, to=1-3]
\end{tikzcd}\]
    where by $d.d$ (sadly ambiguous, in this case) we wish to express the first projection of the pullback. The desired rule is then induced by the first product projection and it assumes the following form.
    $$ \mathbb{E}(d.d,conj)\to \ee \to \ee$$
    \item[($\land$\textsc{E2})] Dually, we compose the equalizer with the functor induced by the second projection.
\end{itemize}

\begin{defn}[Heyting and Boolean NDTs]\label{fojndt}
A NDT is said to
\begin{itemize}
    \item be {\it Heyting} if we have operators $\bot, \top, \wedge, \vee, \Rightarrow$ of the proper arities on $p$;
    \item be {\it Boolean} if it is Heyting and, being $\neg:= (\text{-}) \Rightarrow \bot $, the morphism of fibrations $\neg\neg$ is equivalent to $\id$.
\end{itemize}
\end{defn}

\subsection{Substitution}\label{termsassub}
While in \Cref{secdtt} we thought of morphisms of contexts as substitutions, in the setting of proof theory we regard them as terms. When we write a map
    $$ y \to x$$
we see it as a list of terms and denote it as such:
    $$ [t/x] : y \to x .$$
In particular, if $x=x_1 \times\dots\times x_k$, each term $t_i = \pr_i \circ t $ is a term built up from $y$ and in context $x_i$, with $i=1,\dots,k$. Then we can identify $\ctx_{/x}$ with the classifier collecting {\it all} terms in context $x$.

All of this belongs to the intuition and in fact there is nothing more to $\ctx$ than what described in \Cref{jgen}, but it is with this perspective that we now look at how substitution behaves in NDTs. Recall from \Cref{syntax_policies} that substitutionality allows us to compute rules and policies as the following

\begin{minipage}{0.63\textwidth}
\[\begin{tikzcd}[ampersand replacement=\&]
	{\ctx\due.\cod\ff} \&\&\& \ff \\
	\& {\ctx\due.\dom\ff} \\
	\ctx\due \&\&\& \ctx \\
	\& \ctx\due
	\arrow[""{name=0, anchor=center, inner sep=0}, "\cod", from=3-1, to=3-4]
	\arrow["\id"', curve={height=6pt}, from=3-1, to=4-2]
	\arrow["{\dom.p}"', from=2-2, to=1-4]
	\arrow[""{name=1, anchor=center, inner sep=0}, "{\cod.p}", from=1-1, to=1-4]
	\arrow["{p^*\id}"'{pos=0.6}, curve={height=6pt}, from=1-1, to=2-2]
	\arrow["p", from=1-4, to=3-4]
	\arrow["\dom"', from=4-2, to=3-4]
	\arrow["\alpha"'{pos=0.6}, shorten <=2pt, shorten >=3pt, Rightarrow, from=4-2, to=0]
	\arrow["{p^*\alpha}"{pos=0.2}, shorten <=2pt, shorten >=4pt, Rightarrow, from=2-2, to=1]
\end{tikzcd}\]
\end{minipage}
\begin{minipage}{0.35\textwidth}
\centering
 \begin{prooftree}\AxiomC{$x \vdash \phi\; \ff$}\UnaryInfC{$y \vdash \phi[t/x] \; \ff$}\end{prooftree}
\end{minipage}
\vspace{.1cm}

\noindent and here we have only blindly expanded the information contained in the diagram on the left by following the discussion in \ref{syntax_policies}. 
%
%

\subsection{Formal rules for quantifiers} \label{quantrules}
Finally, we wish to give an account of quantifiers, hence we introduce more structure on $p$ and on the judgemental theory it generates.

\begin{defn}[First order NDTs]
A NDT is said to
\begin{itemize}
    \item be {\it intuitionistic first order} if it has weakening (\Cref{weak}), is Heyting and $w$ from \Cref{weak} has left and right adjoints,
    $$ \exists \dashv w \dashv \forall,$$
    where $\exists, \forall$ are morphisms of fibrations and belong to $\classof{J}$; we call such theories IcFOTs, for short;
    \item is {\it classical first order} if it is intuitionistic first order and also Boolean; we call these cFOTs for short.
\end{itemize}
\end{defn}

\noindent We believe that the request of being morphisms of fibrations (i.e. preserve cartesian squares) is related to the more traditional properties required for $\forall,\exists$, namely Frobenius reciprocity and Beck-Chevalley.

\begin{rem}
Consider a (intuitionistic) first order theory in the traditional sense.  Then it induces a (I)cFOT: (the fibration associated to) the hyperdoctrine of Lindenbaum-Tarski algebras of well-formed formulae, as for example in \cite{mr_quotientcompl13}.
\end{rem}

We only provide explicit representation of the rules involving $\forall$ in the IcFOT, $\exists$ could be worked out in a similar fashion. First of all, notice that the pair of adjoint functors $w\dashv\forall$ induces (via the hom-set isomorphism) the following rule (on the left)

\begin{minipage}{0.45\textwidth}
     \begin{prooftree}\AxiomC{$x\times y \vdash w(\phi,\psi) \leq \chi \; \ff$}\doubleLine\LeftLabel{(FA)}\UnaryInfC{$(x,y)\vdash (\phi,\psi) \leq \forall\chi \;\ff\times\ff$}\end{prooftree}
 \end{minipage}
 \begin{minipage}{0.55\textwidth}
     \begin{prooftree}\AxiomC{$x\times y \vdash w_y\phi\land w_x\psi \leq \chi \; \ff$}\doubleLine\UnaryInfC{$x \vdash \phi \leq \forall_y\chi \;\ff\qquad y \vdash \psi \leq \forall_x\chi \;\ff$}\end{prooftree}
 \end{minipage}
\vspace{.1cm}

which, if we denote $\forall_y = \pr_1 \circ \forall$ and $\forall_x = \pr_2 \circ \forall$, amounts to the rule on the right. The two rules we need to produce are the following.

\vspace{.03cm}
\hfill
\begin{minipage}{0.45\textwidth}
     \begin{prooftree}\AxiomC{$x\times y; w_y \Gamma \vdash \phi$}\LeftLabel{($\forall$\textsc{I})}\UnaryInfC{$x;\Gamma \vdash \forall_y \phi$}\end{prooftree}
 \end{minipage}
 \begin{minipage}{0.28\textwidth}
     \begin{prooftree}\AxiomC{$x;\Gamma \vdash \forall_y \phi $}\LeftLabel{($\forall$\textsc{E})}\UnaryInfC{$x;\Gamma\vdash \phi[t/y]$}\end{prooftree}
 \end{minipage}
  \begin{minipage}{0.16\textwidth}
    \phantom{y}
 \end{minipage}
\vspace{.1cm}

\noindent Notice that we included the writing $w_y\Gamma$ (with $w_y$ of the kind described in \Cref{weak}) to express the desired dependency, since in this case we wish to say that there is no $y$ free in $\Gamma$. Also, writing $\phi[t/y]$ is a bit improper in the sense that, since $p(\phi)=p(\phi_\Gamma) \times y = x\times y$, each substitution in $\phi$ should have codomain $x \times y$. It is clear what happens here, but we will go into detail when the time comes.

We begin with Introduction. It does actually pretty much read as the fact that $\forall$ is right adjoint to $w$ ``at'' the triple $(\, (\phi_\Gamma,\phi_\Gamma),\,\phi \,)$, but if we wish to write a rule in the sense of \Cref{jt}, we shall start computing the premise, which we do via the following pullback
\[\begin{tikzcd}
	{\ff (Pr_1\,p.\times).p\ff^\times} & {\ff^\times} \\
	& {\ctx^2} \\
	\ff & \ctx
	\arrow["p"{description}, from=3-1, to=3-2]
	\arrow["{Pr_1}", from=2-2, to=3-2]
	\arrow["{p.\times}", from=1-2, to=2-2]
	\arrow[from=1-1, to=3-1]
	\arrow[from=1-1, to=1-2]
	\arrow["\lrcorner"{anchor=center, pos=0.125}, draw=none, from=1-1, to=3-2]
\end{tikzcd}\]
which classifies pairs $(x\vdash \Gamma,\, x\times y \vdash \phi)$. But now we exploit the fact that
$$  w_y\Gamma \leq \phi  \quad\text{iff}\quad w_y\Gamma \land \phi = w_y\Gamma$$
so we ask of the equalizer of the maps
\[\begin{tikzcd}
	{\ff (Pr_1\,p.\times).p\ff^\times} & {\ff^2} && {\ff^2} & \ff
	\arrow[hook, from=1-1, to=1-2]
	\arrow["{p^\times}"', from=1-4, to=1-5]
	\arrow["{(Pr_1,Pr_1)}"', shift right=1, from=1-2, to=1-4]
	\arrow["\id", shift left=1, from=1-2, to=1-4]
\end{tikzcd}\]
with the top one computing $w_y\Gamma \land \phi$ and the bottom one $w_y\Gamma$. We denote $\mathbb{E}(\,p^\times,p^\times (Pr_1,Pr_1)\,)$ with $\mathbb{A}$.


\begin{itemize}
    \item[($\forall$\textsc{I})] The introduction rule is the functor $ \mathbb{A} \to \mathbb{A}$ which follows from the hom-set isomorphism discussed above. Its inverse implies that actually it is the following.
    \begin{prooftree}\AxiomC{$x\times y; w_y \Gamma \vdash \phi$}\doubleLine\UnaryInfC{$x;\Gamma \vdash \forall_y \phi$}\end{prooftree}
\end{itemize}
 With Elimination, we (implicitly) use the isomorphism above and write
$ [t/y]$ for $([x/x],[t/y]):x \to x\times y  $
exploiting $ \ctx_{/x\times y} \cong \ctx_{/x} \times \ctx_{/y}$. Using substitution again as in \Cref{syntax_policies}, we get
\begin{prooftree}\AxiomC{$x\times y; w_y \Gamma \vdash \phi$}\UnaryInfC{$x;(w_y\Gamma)[t/y]\vdash \phi[t/y]$}\end{prooftree}

But recall that $w_y\Gamma =\phi_\Gamma [\pr_1]$, and since
\[
[\pr_1][t/y]: x\to x\times y \to x
\]
is $\id=[x/x]$, given that also $p$ is faithful, we automatically get $(w_y\Gamma)[t/y]=\Gamma$ concluding the proof.

\subsection{Cut elimination}\label{cutelim}
In pointing out necessary features of a judgemental analogue of natural deduction, we see that no instance of Cut is (explicitly) mentioned and, instead, in \Cref{structrules} Cut is shown to {\it automatically} be in the IcFOT generated by $p:\ff\to \ctx$. We regard this as an instance of what in sequent calculus is called ``cut elimination'' (and is shown to be quite hard to prove \cite{Hauptsatz}), or of ``normalization'' in natural deduction (which, in turn, follows almost instantly from admissibility).

In a very precise sense, such rule is a tool that we already have encoded in the theory the moment we require that it satisfies some properties that we deem fundamental. In fact, curiously, the main reason it works is the existence of the domain-codomain policy (\Cref{triv}) and {\it not} (only) composition of arrows. More on this peculiarity was discussed in \Cref{dtycut}.

\section{Ceci n'est pas un topos} \label{topos}
The definitions developed in this work allow for a discussion about the internal logic of a topos, intended in the most unbiased sense. Indeed this section will touch on several variations of the concept:
\begin{itemize}
    \item elementary topoi à la Lawvere-Tierney \cite{lawvere1971quantifiers};
    \item pretopoi and predicative approaches in the spirit of Maietti \cite{maietti2005modular};
    \item $2$-topoi à la Weber \cite{weber2007yoneda} and cosmoi à la Street \cite{street1974elementary,street1980cosmoi}.
\end{itemize}

We will see that all these notions of \textit{topos} support a plain dependent type theory in the sense of \Cref{secdtt}. Such a dtt recovers, among other things, the Mitchell-Bénabou language of the topos and nicely interacts with its Kripke-Joyal semantics. Most importantly, though, our treatment frames the main feature of a topos-like category in a clear way. The discussion is set in such a way that at each step the level of conceptual complexity gets higher and higher. The discussion about predicative foundations, in particular, contains a key point of view to understand our treatment of $2$-topoi, which is an improved version of \cite{weber2007yoneda}.

\subsection{Elementary topoi} \label{elementary}
\begin{disc}[A bit of history]
The internal logic of a topos has been discussed by several authors. After \cite{sheavesingeometry}, this collective humus has been crystallized in the Mitchell-Bénabou language and its \textit{tautological} interpretation, the Kripke-Joyal semantics. These attributions are somewhat symbolic. For what concerns the Mitchell-Bénabou language, the best historical account is given, to our knowledge, by Johnstone \cite{johnstone1977topos}. After Mitchell's original contribution \cite{mitchell1972boolean}, Johnstone refers to the unfindable \cite{coste1972langage} for Bénabou's contribution, but the paper is actually authored by Coste. \cite{osius1975logical} and others were definitely part of the intellectual debate on the topic. For what concerns the Kripke-Joyal semantics the situation is much more cloudy, Osius \cite{osius1975note} tells us that the original ideas from Joyal were never published, while a footprint of Joyal's contribution to the topic only emerges (in French) in \cite{boileau1981logique}. These ideas were later conveyed in several texts with slight variations, like \cite{lambek1988introduction} and \cite{borceux_1994}.
Both in the case of the language and its semantics, we will refer to the presentation in \cite[VI, Sec. 5 and 6]{sheavesingeometry} which is in a sense the most informal and essential. Our main objective is to demonstrate that our formalism can reboot the core ideas behind the Mitchell-Bénabou language. We will not discuss in detail Kripke-Joyal semantics, even though the connection could be drawn, as exemplified by the recent \cite{awodey2021kripke}.
\end{disc}

\begin{defn}[The dtt of an elementary topos] \label{dttelementary}
For an an elementary topos $\ctg{E}$, we can construct a dependent type theory in the sense of \Cref{jdtt} as follows.
\[\begin{tikzcd}
	{\ctg{E}_{/1}} && {\ctg{E}_{/\Omega}} \\
	\\
	& {\ctg{E}}
	\arrow["{\id}"{description}, from=1-1, to=3-2]
	\arrow["\omega"{description}, from=1-3, to=3-2]
	\arrow[""{name=0, anchor=center, inner sep=0}, "{\Sigma_\top}"{description}, from=1-1, to=1-3]
	\arrow[""{name=1, anchor=center, inner sep=0}, "{\Delta_\top}"{description}, curve={height=18pt}, dashed, from=1-3, to=1-1]
	\arrow["\dashv"{anchor=center, rotate=90}, draw=none, from=0, to=1]
\end{tikzcd}\]
The map $\Sigma_\top$ is induced (via precomposition) by the map $\top : 1 \to \Omega$ which picks the top-element of $\Omega$. $\Delta_\top$ is given by pullback, and of course the whole discussion fits perfectly with \Cref{awodeytous}, with the technical advantage that the presheaves in this case are internally represented by objects in the topos, thus there is no need to use the Yoneda embedding.
\vspace{-0.2cm}
\[\begin{tikzcd}
	{\Sigma\Delta \phi} && 1 \\
	\\
	X && \Omega
	\arrow["\top"{description}, from=1-3, to=3-3]
	\arrow["\phi"{description}, from=3-1, to=3-3]
	\arrow[color={rgb,255:red,214;green,92;blue,92}, dashed, from=1-1, to=3-1]
	\arrow[dashed, from=1-1, to=1-3]
\end{tikzcd}\]
\end{defn}

\begin{rem}[Comprehension category, display maps, monomorphisms]
Of course, at this point the whole content of \Cref{secdtt} applies, and thus we can load a whole judgement calculus for this dependent type theory. For example, the rule
\begin{prooftree}\AxiomC{$\Gamma\; \vdash \phi \; \ctg{E}_{/\Omega}$}\LeftLabel{$(\Delta)$}\UnaryInfC{$\Delta \phi \vdash \Delta \phi \; \ctg{E}_{/1}$}\end{prooftree}
is telling us that to each proposition $\phi: X \to \Omega$, corresponds an object $\Delta \phi$, which is precisely the object supporting the subobject of $X$ classified by $\phi$.
Similarly, following \Cref{tojacobs}, we obtain a representation of the internal logic of the topos in terms of a comprehension category, \[\disp: \ctg{E}_{/\Omega} \to \ctg{E}\due.\]
Such correspondence maps a formula $\phi$ to the dashed colored arrow in the construction above. It follows that the correspondence maps a proposition to its zero locus, i.e. the monomorphism whose characteristic function is precisely $\phi$. Of course, this idea is not novel and it dates back to Taylor's PhD thesis or his more recent \cite{taylor1999practical}.
\end{rem}

\begin{rem}[Mitchell-Bénabou reloaded]
Following \cite[VI, Sec. 5]{sheavesingeometry} we see that there is a canonical dictionary between our judgements classified by $\ctg{E}_{/\Omega}$ and \textit{formulae}, i.e. terms of type $\Omega$ in the sense of \cite[pag. 299, right after the bulleted list]{sheavesingeometry}. Moreover, and somewhat most importantly, display maps construct subobjects as zero locus of formulae, as explained in \cite[pag. 300, right after the bulleted list]{sheavesingeometry}.

\begin{center}
\begin{tabular}{ c | c }
\hline
 \phantom{LO}$ X \vdash \phi \; \ctg{E}_{/\Omega}$ \phantom{LO}&\phantom{LO} $ \phi(x) $ \phantom{LO}\\
  \phantom{LO}$ X \vdash \Delta_\top \phi \; \ctg{E}_{/\top}$ \phantom{LO}&\phantom{LO} $ \{x | \phi(x) \}$ \phantom{LO}\\
\hline
\end{tabular}
\end{center}

Notice the difference between $\Delta \phi$ and $\disp_\phi$: even though they might seem to be similar things, the first one gives us the support of the subobject, while the second one gives us the subobject itself.
\[\begin{tikzcd}
	{\{x |\phi(x)\}} && 1 \\
	\\
	X && \Omega
	\arrow["\top"{description}, from=1-3, to=3-3]
	\arrow["\phi"{description}, from=3-1, to=3-3]
	\arrow["{\disp_{\phi}}"{description}, dashed, from=1-1, to=3-1]
	\arrow["\Delta\phi"{description}, dashed, from=1-1, to=1-3]
\end{tikzcd}\]
\end{rem}
As a result of this discussion, one can use the judgement calculus produced by this dependent type theory to simulate the internal logic of the topos, and the result will be consistent with the Mitchell-Bénabou language of the topos. 

Let us give a few examples. Notice that we chose topoi as a very strong theory, but in fact \Cref{toposunit}, \Cref{toposid} show the \emph{modularity} of our approach, in a fashion very much affine to \cite{maietti2005modular}.
\begin{lem}\label{toposunit}
The pDTT induced by a topos $\ctg{E}$ has unit types in the sense of \Cref{unit}.
\end{lem}
\begin{proof}
It suffices to show that we have functors $\ast$ and $\mathsf{1}$ making the following diagram commute and the square a pullback.
	\[\begin{tikzcd}
		\ctg{E} && \ctg{E}_{/\top} \\
		\ctg{E} && \ctg{E}_{/\Omega} \\
		& {\ctg{E}}
		\arrow["\id"', from=1-1, to=2-1]
		\arrow["\ast"{description}, color={rgb,255:red,167;green,42;blue,42}, from=1-1, to=1-3]
		\arrow["{\mathsf{1}}"{description}, color={rgb,255:red,167;green,42;blue,42}, from=2-1, to=2-3]
		\arrow["\Sigma", from=1-3, to=2-3]
		\arrow["\id"', from=2-1, to=3-2]
		\arrow["\omega", from=2-3, to=3-2]
	\end{tikzcd}\]
Let us denote $!_X$ the unique map from $X$ to the terminal -- for the moment, elsewhere we have and we will use $X$ both for the object and the map to $1$. One can easily check that defining $\ast\colon X\mapsto !_X$, and in the obvious way on morphisms, and $\mathsf{1}\colon X\mapsto\top\circ\, !_X$, and in the obvious way on morphisms, does the job.
\end{proof}

\begin{lem}\label{toposid}
The pDTT induced by a topos $\ctg{E}$ has extensional identity types in the sense of \Cref{widtypes}.
\end{lem}
This can be proved in similarly as in \Cref{toposunit}, using equalizers. We take a bit of care in proving the following, instead.
\begin{lem}
The pDTT induced by a topos $\ctg{E}$ has dependent product types in the sense of \Cref{wpitypes}.
\end{lem}
\begin{proof}
It suffices to show that we have functors $\lambda$ and $\Pi$ making the following diagram commute and the square a pullback.
\[\begin{tikzcd}
	{\ctg{E}_{/\Omega}.\Delta\ctg{E}_{/\top}} && {\ctg{E}_{/\top}} \\
	{\ctg{E}_{/\Omega}.\Delta{\ctg{E}_{/\Omega}}} && {\ctg{E}_{/\Omega}} \\
	& {\ctg{E}}
	\arrow["{\Sigma.(\id\Delta.\omega)}"', from=1-1, to=2-1]
	\arrow["\lambda"{description}, color={rgb,255:red,167;green,42;blue,42}, from=1-1, to=1-3]
	\arrow["\Pi"{description}, color={rgb,255:red,167;green,42;blue,42}, from=2-1, to=2-3]
	\arrow["\Sigma", from=1-3, to=2-3]
	\arrow["v"{description}, from=2-1, to=3-2]
	\arrow[from=2-3, to=3-2]
\end{tikzcd}\]
Let us first compute the two categories
\[
\ctg{E}_{/\Omega}.\Delta{\ctg{E}_{/\Omega}}\tand\ctg{E}_{/\Omega}.\Delta\ctg{E}_{/\top}\;.
\]
Following the construction in \Cref{typedep}, we can see that they respectively have objects
\[
(\phi,\psi)\tand(\phi,\{x|\phi(x)\})
\]
with $\phi,\psi$ as below.
\[\begin{tikzcd}[ampersand replacement=\&]
	\Omega \& {\{x|\phi(x)\}} \& 1 \\
	\& X \& \Omega
	\arrow["{\disp_\phi}"', from=1-2, to=2-2]
	\arrow["\phi"', from=2-2, to=2-3]
	\arrow["\top", from=1-3, to=2-3]
	\arrow["\Delta\phi", from=1-2, to=1-3]
	\arrow["\psi"', from=1-2, to=1-1]
	\arrow["\lrcorner"{anchor=center, pos=0.125}, draw=none, from=1-2, to=2-3]
\end{tikzcd}\]
The verical map on the left hand side of the square computes the diagonal of the pullback square above, meaning it acts as $(\phi,\{x|\phi(x)\})\mapsto(\phi,\phi\circ\disp_\phi)$.

To provide suitable $\Pi, \lambda$ we of course look at right adjoints to pullback functors. The fact that they reasonably model dependent products has been widely discussed from the publication of \cite{seely1984locally}, with distinguished treatments in \cite{https://doi.org/10.1002/malq.202000069}, where an explicit construction is given, and in \cite{maietti2005modular}, where it is better framed in the context of the different properties of a topos and their logical counterpart.

One can always show that for a given $\phi\colon X\to \Omega$ (and, in fact, for any $f\colon X\to Y$), we have the following equivalence and adjunction,
\[\begin{tikzcd}[ampersand replacement=\&]
	{\ctg{E}_{/X}\cong(\ctg{E}_{/\Omega})_{/\phi}} \&\& {\ctg{E}_{/\Omega}}
	\arrow[""{name=0, anchor=center, inner sep=0}, "{\Pi_\phi}"', shift right=3, from=1-1, to=1-3]
	\arrow[""{name=1, anchor=center, inner sep=0}, "{\phi^*}"', shift right=3, from=1-3, to=1-1]
	\arrow["\dashv"{anchor=center, rotate=-90}, draw=none, from=1, to=0]
\end{tikzcd}\]
see for example \cite[IV.7]{sheavesingeometry}. Given a pair $(\phi,\psi)$ in $\ctg{E}_{/\Omega}.\Delta{\ctg{E}_{/\Omega}}$, then, it is natural to compute $\disp_\psi$,
\[\begin{tikzcd}[ampersand replacement=\&]
	1 \& {\{x,\phi(x)|\psi(x)\}} \\
	\Omega \& {\{x|\phi(x)\}} \& 1 \\
	\& X \& \Omega
	\arrow["{\disp_\phi}"', from=2-2, to=3-2]
	\arrow["\phi"', from=3-2, to=3-3]
	\arrow["\top", from=2-3, to=3-3]
	\arrow["\Delta\phi", from=2-2, to=2-3]
	\arrow["\psi", from=2-2, to=2-1]
	\arrow["\lrcorner"{anchor=center, pos=0.125}, draw=none, from=2-2, to=3-3]
	\arrow["{\disp_\psi}", from=1-2, to=2-2]
	\arrow["\top"', from=1-1, to=2-1]
	\arrow["\Delta\psi"', from=1-2, to=1-1]
	\arrow["\lrcorner"{anchor=center, pos=0.125, rotate=-90}, draw=none, from=1-2, to=2-1]
\end{tikzcd}\]
and define $\Pi(\phi,\psi)=\Pi_\phi(\disp_\phi\circ\disp_\psi)$. As for $\lambda$, we put $\lambda(\phi,\{x|\phi(x)\})=\{x|\phi(x)\}$.

The square involving $\Pi,\lambda$ commutes because $\{x,\phi(x)|\phi(x)\}=\{x|\phi(x)\}$ hence the composition of displays is mapped to the trivial triangle $\disp_\phi: \phi\circ\disp_\phi\to\phi$. The universal property of $\Pi_\phi$ is what guarantees that the domain of $\Pi(\phi,\psi)$ is, in fact $\{p|\Pi(\phi,\psi)(p)\}$. From this remark, one can immediately show that the desired square is a pullback.
\end{proof}

This is nothing new, but we believe it provides a different perspective on the internal logic of a topos (or any category, really). It ends up being really close to the following intuition.
\hyphenblockcquote{english}{maietti2005modular}{
We can then conclude that describing the internal
dependent type theory of a category means to capture the type-theoretic properties of the
codomain fibration, while describing the internal many-sorted logic of a category – considering the sorts as types – means to capture the properties of the subobject fibration together
with the one-dimensional structure of the category under consideration.
}
In a sense, our work is about extending this process to more than just the codomain fibration.

\subsection{Predicative topoi} \label{predicative}
Under the name of \textit{predicative mathematics} goes a gradient of foundations that, at its extreme, rejects the assumption of function spaces and powersets. In this sense, the category of sets we are used to work with, and on which the whole program of ETCS \cite{lawvere1964elementary,lawvere2005elementary} is built on, is inherently impredicative. As Awodey pointed out in his talk at the CT2021 in Genova \cite{awoct}, this bit of impredicativity is the trade off for a very algebraic approach to set theory, so that its main features can be encoded in few axioms, as those in the definition of elementary topos. Yet, for a sufficiently topos-like predicative foundation, we can still reason in a way that is very similar to the case of an elementary topos, and provide a dtt whose judgement calculus is the internal logic of the \textit{predicative topos}.

\begin{defn}[Virtual object]
A presheaf $P: \ctg{C}\opp \to \catof{Set}$ is virtually representable, or more simply a \textit{virtual object} if it preserves all limits that exist. A subobject $P \to \hirayo c$ of a representable that is a virtual object is called a \textit{virtual subobject} of $c$.
\end{defn}

\begin{rem}[Freyd dust]
Virtual objects will play a crucial role in our definition of predicative topos. Before we give it, though, we feel the need to put a bit of context around our virtual objects. While the name itself, and in a sense the intuition that we have on them, is somewhat original, the general idea has been known to category theorists since forever. If we ignore the solution set condition in the Adjoint Functor Theorem, then the Yoneda embedding yields an equivalence of categories \[\hirayo : \ctg{C} \to \catof{Cont}(\ctg{C}\opp , \catof{Set}).\]
Thus, virtual objects are a kind of \textit{Freyd dust} covering the image of the Yoneda embedding. These presheaves have almost indistinguishable properties with respect to a representable, and  -- up to a size issue -- they are \textit{just} the image of the Yoneda embedding. This intuition sits at the core of the very recent \cite{brandenburg2021large}, and was already used from a technical point of view in \cite[6.4]{Makkaipare}.
\end{rem}

\begin{defn}[Predicative topos]
A predicative topos $\ctg{C}$ is a category with finite limits that
\begin{itemize}
    \item is \textit{virtually} cartesian closed, i.e. $\ctg{C}(- \times b, c)$ is a virtual object for all $b, c$;
    \item has \textit{specification}, i.e. virtual subobjects are representable;
    \item has \textit{virtual} subobject classifier, i.e. the subobject doctrine $\mathrm{Sub}: \ctg{C}\opp \to \catof{Set}$ is a virtual object.
\end{itemize}
\end{defn}

\begin{rem}[Descent, Descent, Descent]
This definition captures a key feature of Grothendieck topoi. Indeed, if one inspects the reason for which a Grothendieck topos has a subobject classifier, one discovers that the exactness properties of the category force the subobject functor to be continuous, thus \textit{descent} implies that $\mathrm{Sub}$ is a virtual object.
Because descent is the defining feature of infinitary pretopoi, their subobject doctrine is a virtual object too. It follows that an infinitary pretopos with specification is a predicative topos too.
If we want these exactness property to be witnessed by an object in the category (i.e. if we want  $\mathrm{Sub}$ to be representable) we trade its existence with predicativity. This very geometric point of view is implicitely claiming that some form of \textit{descent} is the key feature of a topos, which is impredicatively forced in the definition of elementary topos via its subobject classifier. Let us isolate the main observation of this remark in the corollary below.
\end{rem}

\begin{cor}
    An infinitary pretopos with specification is a predicative topos.
\end{cor}

\begin{defn}[The dtt of a predicative topos] \label{dttpredicative}
Let $\ctg{C}$ be a predicative topos. Consider the following pullback diagram in the category of prestacks over $\ctg{C}$,
\[\begin{tikzcd}
	P && 1 \\
	\\
	{\hirayo \Gamma} && {\mathrm{Sub}}
	\arrow["\top", from=1-3, to=3-3]
	\arrow["\phi"{description}, from=3-1, to=3-3]
	\arrow[color={rgb,255:red,214;green,92;blue,92}, dashed, from=1-1, to=3-1]
	\arrow[color={rgb,255:red,214;green,92;blue,92}, dashed, from=1-1, to=1-3]
	\arrow["\ulcorner"{anchor=center, pos=0.125}, draw=none, from=1-1, to=3-3]
\end{tikzcd}\]
Because all the prestacks involved in the cospan are virtual objects, and virtual objects are trivially closed under limits, $P$ is virtual. Since $\top$ is a mono, and monos are pullback stable, $P$ is a virtual subobject, and thus it is represented by assumption via some object $\Gamma.\phi \in \ctg{C}$. It follows as in the proof of \Cref{awodeytous}, that in the diagram below involving the subobject fibration, the functor $\Sigma_\top$ has a right adjoint, which thus provides a plain dtt in our sense.

\[\begin{tikzcd}
	{\ctg{C}_{/1}} && {\mathrm{Sub}} \\
	& {\ctg{C}}
	\arrow[from=1-1, to=2-2]
	\arrow[from=1-3, to=2-2]
	\arrow["{\Sigma_\top}"{description}, from=1-1, to=1-3]
	\arrow["{\Delta_\top}"{description}, curve={height=18pt}, dashed, from=1-3, to=1-1]
\end{tikzcd}\]

\end{defn}

\subsection{Elementary $2$-topoi} \label{2topos}

Elementary $2$-topoi were introduce by Weber in \cite{weber2007yoneda}, with Yoneda structures and cosmoi \cite{street1980cosmoi} in mind. The analogy with elementary topoi is exemplified by the prototypical example of elementary $2$-topos.

\begin{exa}[The $2$-topos of categories]
Consider the $2$-category $\catof{Cat}$, with some flexibility on size. To be more precise, $\catof{cat}$ is the $2$-category of (essentially) small categories, $\catof{Cat}$ is the $2$-category of locally small, but possibly large categories, $\catof{CAT}$ is the $2$-category of locally large categories. Then
\vspace{-0.2cm}
\[\begin{tikzcd}
	{\mathrm{Elts}(\phi)} && {\catof{Set}_{\bullet}} \\
	\\
	{\ctg{C}} && {\catof{Set}}
	\arrow[from=1-3, to=3-3]
	\arrow["\phi"{description}, from=3-1, to=3-3]
	\arrow[dashed, from=1-1, to=3-1]
	\arrow[dashed, from=1-1, to=1-3]
	\arrow["\ulcorner"{anchor=center, pos=0.125}, draw=none, from=1-1, to=3-3]
\end{tikzcd}\]

\noindent there is an equivalence of categories - established by the Grothendieck construction - between discrete fibrations over $\ctg{C}$ and copresheaves $\phi$ as in the diagram above. This is telling us that $\catof{Cat}$ has a classifier of discrete fibrations, given by the copresheaf construction. So, in a $2$-topos, discrete opfibrations play the analog of monomorphisms, and their associated prestack is representable.

\[\catof{Fib}_{\text{dsc}}(\ctg{C}) \simeq \catof{Cat}(\ctg{C}, \catof{Set}).\]

\end{exa}

\begin{defn}[Elementary $2$-topos, very similar to {\cite[Def. 4.10]{weber2007yoneda}}]
An elementary $2$-topos is a cartesian closed $2$-category with finite limits and a classifier of discrete fibrations.
\end{defn}

\begin{rem}[Not exactly Weber]
Weber's original definition allows for a more humble notion of classifier, indeed it can be a classifier of \textit{some} discrete fibrations. Also, it is based on the notion of opfibration, but this choice does not lead to any conceptual difference in our treatment. Among the examples, he gives $\catof{Set}_{\bullet, \lambda} \to \catof{Set}_\lambda$ as the classifier of the fibrations with $\lambda$-small fibers.
Our definition, which is in some sense more ambitious but also closer to that of elementary topos rules out all our desired examples.
\begin{itemize}
    \item $\catof{cat}$ has finite limits and is cartesian closed, but it does not have a classifier.
    \item $\catof{Cat}$ has finite limits and a subobject classifier, but is not cartesian closed.
    \item $\catof{CAT}$ does not have the classifier, again.
\end{itemize}

\noindent This is probably the reason behind Weber's flexibility, indeed the classifiers of $\lambda$-\textit{small} fibrations are by many considered a sufficiently expressive alternative that successfully eludes size issues. We do not see it that way. Here we see that we have a problem that is very similar to the predicativity case.
\end{rem}

Luckily, there is a very consistent way to fix Weber's definition of elementary $2$-topos in such a way that all the listed desiderata are indeed examples. The situation is very similar to that of predicativity. Indeed, the prestack of fibrations \[P: \catof{cat}\opp \to \catof{Cat}\]
is a virtual object (because it is classified by hom-ing into $\catof{Set}$), despite not being representable. This is witnessing the fact that $\catof{cat}$ has a $2$-dimensional version of \textit{descent}, and indeed it is a $2$-topos in the sense of Street \cite{street1982characterization}. Of course, on a technical level, it just follows from the fact that such a prestack is almost representable, and thus of course it is a virtual object.

\begin{defn}[The fibration of discrete opfibrations]
Let $\ctg{K}$ be a $2$-category, and consider the prestack mapping an object $k$ to the category of discrete fibrations over $k$, \[ k \mapsto \catof{Fib}_{\text{dsc}}(k).\] Via the Grothendieck construction, this prestack comes with an associated fibration, for which we will use the same name. Moreover, because the identity of $k$ is always a discrete fibration, we can construct the following commutative triangle.

\[\begin{tikzcd}
	{\ctg{K}_{/1}} && {\catof{Fib}_{\text{dsc}}} \\
	& {\ctg{K}}
	\arrow[from=1-1, to=2-2]
	\arrow[from=1-3, to=2-2]
	\arrow["{\Sigma_\top}"{description}, from=1-1, to=1-3]
\end{tikzcd}\]
\end{defn}

\begin{defn}[Similar to {\cite[Def. 4.10]{weber2007yoneda}}] \label{elementary 2top}
An elementary $2$-topos is a $2$-category $\ctg{K}$ that
\begin{enumerate}
    \item has finite $2$-limits,
    \item is cartesian closed,
    \item the prestack of discrete fibrations is a $2$-virtual object, i.e. it preserves all $2$-limits,
    \item $\Sigma_\top$ above has a right adjoint.
\end{enumerate}
\end{defn}

\begin{rem}[The dtt of an elementary $2$-topos]
As in the case of \Cref{dttelementary} and \Cref{dttpredicative} the existence of the right adjoint for $\Sigma_\top$ provides us with a dtt in the sense of \Cref{awodeytous}, expressing the internal logic of the elementary $2$-topos.
\end{rem}
\begin{exa}
Now, let us show that $\catof{cat}$ is an elementary $2$-topos in our sense. Given the discussion above, it is enough to verify the condition (4) in the definition above. In the spirit of \Cref{awodeytous}, this follows from the observation that if $\ctg{C}$ is a small category, the category of elements of a copresheaf $\mathrm{Elts}(\phi)$ is always small, and thus we can construct the right adjoint $\Delta_\top$.
\end{exa}

\section{Future developments}\label{futurejt}

There are two kinds of future developments for this project. To begin with, the new language that we propose allows us to compare, analyze, and design deductive systems.

One one hand, as we have specified in the introduction to this paper, we here only see a couple of possible applications of the framework of judgemental theories, but their versatility suggests many more are possible, for example to modal or linear logic. A taste of the first is already contained in \cite{biequivCoEm}. Moreover, as any other calculus, questions of compactness and normalization arise. We believe trying to answer them would lead to interesting insights into both the logic and the category theory.

On the other hand, in \Cref{dtycut} a well-known link between the cut rule and substitution of terms in expressed in our framework. There we suggested many common features of the two, and a comodality seems to appear. We hope to find more examples of these \emph{cut-like} phenomena, and study their intrinsic properties. Moreover, it feels like our treatment of substitution might intercept some concepts in \cite{MCCUSKER2021106689}, where a calculus of substitution is introduced by means of composition of certain dinatural transformations. This is a relation that we wish to investigate in future work.

In a different direction, the general theory of judgemental theories shows some possible tweaks and adjustments that may lead to a crisper and sharper presentation.

Firstly, the attentive reader might have noticed that the choice of fixing a given category for contexts is a mere formality, and it actually makes the definitions less smooth that we wished, see for example the discussion pertaining \Cref{mainjt}: if anything, this work has convinced us that the notion of \emph{context} in a logical theory is simply a relative one. We believe that this line of thought and work should be explored further. Nevertheless, we decided to keep the exposition closer to classical presentations as not to make an already cryptic theory appear even more strenuous to follow.    

Finally, a recent work by the second author and Osmond \cite{di2022bi} shows that $2$-categories with finite bilimits can be used to specify many fragments of first-order logic, in such a way that their functorial semantics recovers precisely their theories. We believe there is a possible unification of the theory of judgemental theories (of a certain shape) with the theory introduced in \cite{di2022bi}, but we shall defer such speculations to future work.

\bibliographystyle{alpha}
\normalsize
\bibliography{thebib}

\end{document}